\documentclass[a4paper, 10pt, twoside]{amsart}
\usepackage[top=3cm, bottom=3cm, left=3cm, right=3cm]{geometry}
\usepackage[pdftex]{graphicx}

\usepackage[utf8]{inputenc}
\usepackage[T1]{fontenc}

\usepackage[english]{babel}
\usepackage[noadjust]{cite}

\usepackage{amsthm,amssymb,epsfig,mathtools}
\usepackage{mathrsfs}  
\usepackage{bbm}        
\usepackage{esint}     
\usepackage{dsfont}
\usepackage{color}
\usepackage{enumerate}
\usepackage{url}
\usepackage[ruled,vlined]{algorithm2e} 

\usepackage{calc}
\usepackage[inline]{enumitem}
\usepackage[normalem]{ulem}
\usepackage{url}

\usepackage[bookmarks, bookmarksnumbered, pdfstartview={XYZ null null 1.00}]{hyperref}

\usepackage{tikz}
\usetikzlibrary{shapes, arrows, arrows.meta, decorations.markings}

\usepackage{todonotes}
\usepackage[ulem=normalem,draft]{changes}
\usepackage{cancel}

\makeindex

\numberwithin{equation}{section}

\newcommand{\theoname}{Theorem}
\newcommand{\lemmname}{Lemma}
\newcommand{\coroname}{Corollary}
\newcommand{\propname}{Proposition}
\newcommand{\definame}{Definition}

\newcommand{\remkname}{Remark}
\newcommand{\explname}{Example}

\theoremstyle{plain}
\newtheorem{theorem}{\theoname}[section]
\newtheorem{lemma}[theorem]{\lemmname}
\newtheorem{corollary}[theorem]{\coroname}
\newtheorem{proposition}[theorem]{\propname}

\theoremstyle{definition}
\newtheorem{definition}[theorem]{\definame}

\newlist{hypothesis}{enumerate}{1}
\setlist[hypothesis]{label={\textup{(H\arabic*)}}, ref={(H\arabic*)}, leftmargin=*, widest*=10}

  {\list{}{\leftmargin=.5in\rightmargin=.5in}  \item[]  }%
  {\endlist}

\def\dd{{\rm d}}
\newcommand{\eqdef}{\ensuremath{\stackrel{\mbox{\upshape\tiny def.}}{=}}}

\newcommand{\norm}[1]{\left\lVert#1\right\rVert}
\newcommand{\inner}[1]{\left\langle#1\right\rangle}

\def\1B{{\bf  1}}

\newcommand\diam{\mathop{\rm diam}}
\def\dist{\mathop{\rm dist}}
\def\ddiv{\mathop{\rm div}}

\def\intt{\mathop{\rm int}}

\def\supp{\mathop{\rm supp}}

\def\id{{\mathop{\rm id}}}

\def\divv{{\mathop{\rm div}}}

\newcommand{\cvstar}[1]{\xrightharpoonup[#1]{\star}}
\newcommand{\cvweak}[2]{\xrightharpoonup[#1]{#2}}
\newcommand{\cvstrong}[2]{\xrightarrow[#1]{#2}}

\def\inf{\mathop{\rm inf}}
\def\sup{\mathop{\rm sup}}

\def\min{\mathop{\rm min}}

\def\argmin{\mathop{\rm argmin}}

\newcommand{\mres}{\mathbin{\vrule height 1.6ex depth 0pt width
		0.13ex\vrule height 0.13ex depth 0pt width 1.3ex}}

\newcommand\BV{\mathop{\rm BV}}

\newcommand\proj{\mathop{\rm proj}}

\newcommand\C{\mathscr{C}}

\newcommand{\rhotau}[1]{\varrho^\tau_{#1}}
\newcommand{\mutau}[1]{\mu^\tau_{#1}}
\newcommand{\xtau}[1]{x^\tau_{#1}}
\newcommand{\atau}[1]{a^\tau_{#1}}
\newcommand{\Xtau}[1]{\mathbf{x}^\tau_{#1}}
\newcommand{\Atau}[1]{\mathbf{a}^\tau_{#1}}
\newcommand{\Ztau}[1]{\mathbf{z}^\tau_{#1}}
\newcommand{\x}[1]{\mathbf{x}}

\newcommand{\vtau}[1]{\mathbf{v}^\tau_{(#1)}}

\newcommand{\dwl}{d_{W_2,\ell_2}}

\DeclareMathOperator{\Lag}{\mathrm{}{Lag}}
\DeclareMathOperator{\Pac}{\mathscr{P}_{\mathrm{ac}}}

\title[Wasserstein gradients flow of semi-discrete energies]{Wasserstein gradient flows of semi-discrete energies: evolution of urban areas and
uniform quantization}

\author{João Miguel Machado}
\address{Lagrange Mathematical and Computational Center\\
103 rue de Grenelle\\
Paris, 75007, France}
\email{joao-miguel.machado@ceremade.dauphine.fr}

\begin{document}

\begin{abstract}
We study the Wasserstein gradient flow of semi-discrete energies in the space of probability measures, that is functionals depending on two measures-- one being an absolutely continuous density and the other an atomic measure. These energies appear naturally in the field of urban planning. This is done via the celebrated JKO scheme, for which we prove convergence to a limiting system composed of a parabolic PDE with singular advection coupled with an ODE, also presenting singular dynamics. This is first done under more general assumptions using classical tools, and in a second moment convergence is proven to hold in $L^2_t H^1_x$ for the cases of linear and Porous-Medium type diffusions. We then pass to the study of some qualitative properties of this system, such as the convergence of the atoms towards the baricenters of their corresponding Laguerre cells. We finish this work with extensive numerical simulations that aid in formulating conjectures for the qualitative behavior of this system; in the case of linear diffusion, for instance, we observe a dynamic crystallization phenomenon. 

\bigskip

\noindent\textbf{Keywords.} Optimal Transport, Gradient Flows, Urban Planning, Optimal Quantization

\medskip

\noindent\textbf{2020 Mathematics Subject Classification.} 
49Q22, 35A15, 91B52

\end{abstract}

\maketitle

\tableofcontents

\section{Introduction}\label{sec.introduction}
In the present work we study a class of coupled PDE-ODE system that is relevant in the mathematical modeling for the evolution of an urban area and for the optimal quantization of probability measures. 

A central question in the mathematical modeling of urban systems is how to describe the interplay between a population distributed over a territory and the location of a finite number of working or service sites. A variational approach to this problem was proposed in~\cite{buttazzo2005model} by Buttazzo and Santambrogio, who introduced an energy functional describing the competition of three major effects: congestion of the population through the penalization of its density, the cost of operating working sites, and the global cost of transportation from residences to workplaces. 

In this scenario, the population density is described by an absolutely continuous probability measure $\varrho$, while the distribution of working or service sites is given by an atomic measure $\mu \displaystyle = \sum_{i = 1}^N a_i \delta_{x_i}$, where $x_i$ corresponds to the position of the $i$-th center and $a_i$ the population percentage of population attending it. The energy described above is given by
\begin{equation}\label{eq.energy}
  \mathscr{E}(\varrho, \mu) 
  \eqdef 
  \mathscr{F}(\varrho) + \mathscr{G}(\mu) + W_2^2(\varrho, \mu), 
\end{equation}
where 
\begin{equation}\label{eq.internal_energy}
  \mathscr{F}(\varrho) 
  \eqdef 
  \begin{cases}
    \displaystyle 
     \int_\Omega F(\varrho(x))\dd x,& \text{ if } \varrho \ll \mathscr{L}^d\mres \Omega,\\ 
     +\infty,& \text{ otherwise,}
  \end{cases}
\end{equation}
represents a congestion term for the population density and 
\begin{equation}
  \mathscr{G}(\mu) 
  \eqdef 
  \begin{cases}
    \displaystyle 
     \sum_{i = 1}^N g(a_i),& 
     \text{ if } \displaystyle \mu = \sum_{i = 1}^N a_i \delta_{x_i} \text{ with } {\left(x_i\right)}_{i = 1}^N \subset \overline{\Omega},\\ 
     +\infty,& \text{ otherwise,}
  \end{cases}
\end{equation}
represents the cost of operation of a given atomic distribution of working sites. The coupling term given by the squared-Wasserstein distance represents the global transportation cost of the population to their working sites. See~\cite{villani2009optimal,santambrogio2015optimal} for a definition and properties, or Section~\ref{sec.convergenceMMS} for a concise presentation. Besides the work of Butazzo and Santambrogio, optimal transport techniques have been vastly used to model the coupling of an absolutely continuous probability density and an atomic measure~\cite{bourne2014optimality,bourne2015centroidal,bourne2021asymptotic,bouchitte2011asymptotic}, following therefore in the category of semi-discrete transport, see~\cite{kitagawa2019convergence,merigot2021optimal} for more details on the semi-discrete setting. 




Starting from the variational principle described above, our first goal is to derive a system of evolution equations for the population density, workplaces and the corresponding proportions of the population that work on each site, which is done through the gradient flow of the energy~\ref{eq.energy}. The equation obtained with this methodology is the following
\begin{equation}\label{eq.PDE_ODEsysthem}
  \begin{cases}
      &\displaystyle
      \partial_t \varrho_t 
      = \Delta P(\varrho_t) 
      + 
      \ddiv\left(
        \varrho_t \left(\sum_{i = 1}^N(x - x_i(t))\mathbbm{1}_{\Omega_i(t)}
      \right)\right)\\
      &\displaystyle
      \dot{x}_{i}(t) 
      =
      - a_i(t)x_i(t) + \int_{\Omega_i(t)}x \dd \varrho_t + N_{\Omega}(x_i), \text{ for $i=1,\dots,N$}\\ 
      &\displaystyle 
        \dot{a}_i(t) = \left(-g'(a_i(t)) + \psi_i(t)\right)\mathds{1}_{\{a_i>0\}}, \text{ for $i=1,\dots,N$} \\ 
      &\displaystyle 
      \Omega_i(t) = \Lag_i(\psi_t, \mathbf{x}_t), \ \psi(t) \text{ is a potential for } W_2^2(\varrho_t, \mu_t), 
  \end{cases}
\end{equation}
where $N_\Omega(x)$ the normal cone of $\Omega$ at $x$, and $P(\rho) \eqdef \rho F'(\rho) - F(\rho)$ denotes the pressure. Notice the indicator function multiplying the dynamics of $a_i$, by this we mean that if at a given time $\bar t$ the atom $i$ reaches null mass, then it vanishes from the dynamics from this point on. 

This will be done with a \text{minimizing movement scheme} (MMS) in the product topology induced by the $\ell^2$ Wasserstein distance for the densities $\varrho$ and the $\ell^2$ euclidean norm for the sets of atoms and weights in $\Omega^N$ and the $(N-1)$-dimensional simplex. Although it can be defined in any metric space~\cite{Ambrosio2008GigliSavare}, in the context of optimal transport the MMS is better known as the JKO scheme, first introduced in~\cite{jordan1998variational,otto2001geometry} to interpret the Fokker-Planck and Porous Medium equations as gradient flows in this topology. Since~\cite{Ambrosio2008GigliSavare}, there is a standard theory for gradent flows in the Wasserstein topology with respect to geodesically convex energies in the sense of McCann~\cite{mccann1997convexity}.  Although many interesting properties can be proven for solutions of well-established equations having this variational structure, it has also been used as a tool to prove existence of solutions to more complicated models, see~\cite{carlier2019total,maury2010macroscopic}. In these references, as well as in the present work, although the energy we study is not geodesically convex, we can still define the minimizing movement scheme and study its properties.

System~\ref{eq.PDE_ODEsysthem} can be interpreted as follows: The first equation on $\varrho_t$ describes a competition between the tendency of diffusion of the pressure $P(\varrho_t)$ and a concentration of its mass inside each Laguerre cell $\Omega_i$ towards the atom $x_i$. The evolution describing the atoms' positions is proportional to their distance to the barycenter of there corresponding Laguerre cell. On the other hand, the evolution of their corresponding weights is balanced by the fluctuations between the resistance to growth at the current size $g'(a_i)$ and the Kantorovitch potential $\psi_t$. 

The hole system is interconnected through the optimality of the optimal transportation problem which is encoded by $\psi(t)$ and the definition of Laguerre cells
\[
    \Lag_i(\psi_t, \mathbf{x}_t)
    \eqdef 
    \left\{
        x \in \Omega: 
        \frac{1}{2}|x - x_i(t)|^2 - \psi_{i}(t) 
        \le 
        \frac{1}{2}|x - x_j(t)|^2 - \psi_{j}(t)
        \text{ } j = 1,\dots,N
    \right\}. 
\]
Such optimality conditions have a clear economic equilibrium interpretation; indeed $\psi_i$ can be interpreted as the average salary an individual should expect working at $x_i$. Therefore, an optimal $\psi$ achieves the equilibrim between compensation and geographic displacement under a distribution of employment demand $\mu_t = \displaystyle \sum_{i = 1}^N a_i(t)\delta_{x_i(t)}$ and a geographic distribution of workforce offer given by the population $\varrho_t$. In addition, it is known from the theory of optimal transport that Kantorovitch potentials are unique up to an additive constant; in our case $\psi$ is uniquely determined by the fact that $\displaystyle \sum_{i=1}^N \dot{a}_i(t) = 0$.

\subsubsection*{Hypothesis} In order for this problem to be meaningful from a modelling perspective and mathematically challenging, we make the following hypothesis. 
\begin{hypothesis}[resume]
  \item\label{hypo.domain} $\Omega$ is a convex and bounded subset of $\mathbb{R}^d$;
  \item\label{hypo.congestion_term} $F \in \mathscr{C}^2(\mathbb{R}_+)$, is convex, with superlinear growth
  \[
    \lim_{t \to +\infty} \frac{F(t)}{t} = +\infty
  \]
  and satisfies McCann's condition of displacement convexity, namely
  \[
    t \mapsto t^d F(t^{-d}) \text{ is convex and nonincreasing.}
  \]
  \item 
   $g \in \mathscr{C}^1_{loc}((0,1])$, satisfies $g(0) = 0$ and has a cusp at $0$, in the sense that
  \[
    \lim_{t \to 0^+} \frac{g(t)}{t} = +\infty.
  \]
\end{hypothesis}




By fixing the weights to be constant equal to $a_i \eqdef 1/N$ and the density penalization to be Boltzmann entropy $\mathscr{F}(\varrho) = \mathcal{H}(\varrho) \displaystyle \eqdef \int_{\Omega} \varrho \log \varrho \dd x$, we obtain the following \textit{dynamic quantization equation} 
\begin{equation}\label{eq.dynamic_quantization}
    \begin{cases}
        \partial_t \varrho = 
        \displaystyle
        \Delta \varrho + 
        \divv
        \left(
          \varrho_t 
          \sum_{i = 1}^N(x - x_i(t))\mathbbm{1}_{\Omega_i(t)}
        \right) , &  \\
        \dot{x}_{i}(t)
        =
        b_{i}(t)
        -
        x_{i}(t), \text{ for } i=1,\dots,N,
    \end{cases}
\end{equation}
where $b_i(t) \eqdef \displaystyle N \int_{\Omega_i(t)} x \dd \varrho_t$ corresponds to the barycenter of the $i$-th optimal Laguerre cells.

\subsection{Contributions} 

As mentioned above, one of the major contributions of the present manuscript is the derivation, and proof of existence of weak solutions to the equation~\eqref{eq.PDE_ODEsysthem} via the JKO scheme. The coupling with the discrete system introduces many non-trivial steps. The first difficulty is as dealing with boundary effects, in principle the atoms can evolve on the boundary of $\Omega$ making the normal cone constraints active in their dynamics. What prevents this from happening is the fact that the atoms are pulled towards the barycenter of their Laguerre cell and the uniform in time integrability of $\varrho_t$ conferred by the internal energy functional $\mathscr{F}$. The fact that $g$ has a cusp at $0$ also becomes problematic to the dynamics of $a_i$. Our approach is to show that any limiting curve of the discrete scheme has the property that, whenever an atom reaches $0$, it remains $0$ for the rest of the evolution. Therefore, it suffices to characterize their dynamics in any open interval until the first time they vanish. In any such interval, $g'(a_i(t))$ never blows-up. 

Next, following newer developments in the theory of the JKO scheme~\cite{santambrogio2024strong}, we show that the JKO scheme associated to the energy~\eqref{eq.energy} converges strongly in $L^2([0,T];H^1(\Omega))$, instead of the classical convergence in the strong topology of $L^1([0,T]\times\Omega)$, obtained with more standard tools. This is done when the internal energy $\mathscr{F}$ is either Boltzman's entropy or of porous medium type, that is a diffusion of the form $\Delta \varrho^m$ for some $m > 1$. Even with smoother advections, than the one coming from the semi-discrete coupled dynamics, is an improvement from~\cite{santambrogio2024strong}, since up until now these types of strong convergence results have only been obtained in the linear diffusion regime.  

This through convergence analysis allows us to study the qualitative properties of the equation~\eqref{eq.PDE_ODEsysthem}, as well as some simplified versions of it, where the boundary effects do not take place. This is done in Section~\ref{sec.qualitative_props}, where we show that atoms never touch the boundary, unless at the exact moment that their mass reaches $0$. We also show that if an atom is initialized in the boundary, then it is immediately pushed inside the domain, in other words if $x_i(0) \in \partial \Omega$, then $x_i(t) \in \intt \Omega$ for $t$ in a sufficiently small neighborhood of $0$.  These properties immediately imply that the variant equation~\eqref{eq.dynamic_quantization} is globally well-posed over $\mathbb{R}_+$. In this case, we show in Theorem~\ref{thm.conv_bary} that the distance between atoms and their respective barycenters converge to $0$ as $t\to +\infty$. This is done by combining the dissipation of energy of the gradient flow with a careful analysis of the regularity of the evolution of potentials $t \mapsto \psi_i(t)$, which implies global absolute continuity of the evolution of barycenters. This done in Lemma~\ref{lemma.cont_laguerre}. 

We finish the present work with extensive numerical simulations in Section~\ref{sec.numerics} that not only corroborate the theoretical results we have proven, but also allow us to formulate many conjectures on the long time behavior of the system, for both linear and porous medium type diffusions. 

\section*{Acknowledgments}
The author wishes to thank Guillaume Carlier, Quentin Mérigot and Filippo Santambrogio for suggesting this problem and their numerous remarks that enhanced this work. He also warmly thanks the support of the Lagrange Mathematical and Computational Research Center.

\section{Optimal Transport and Minimizing Movement Schemes}\label{sec.MMS}
In this section we review some well known results in the literature of Optimal Transport, Wasserstein gradient flows and establish some notation used in the sequel. 

\subsection{Optimal Transport, Wasserstein distances and the semi-discrete problem}\label{sec.OT}

Let $\Omega \subset \mathbb{R}^d$ be a compact, convex set, we let $\mathscr{M}(\Omega)$  denote the space of finite (scalar) Radon measures on $\Omega$ and
$
	\mathscr{M}^d(\Omega) \eqdef \bigl(\mathscr{M}(\Omega)\bigr)^d
$
the space of finite vector-valued Radon measures. The set of (Borel) probability measures on $\Omega$ is written as $\mathscr{P}(\Omega)\subset\mathscr{M}(\Omega)$. Given $\mu,\nu \in \mathscr{P}(\Omega)$. The $2$-Wasserstein distance $W_2^2$ is defined via the value function of the quadratic optimal transport problem with the cost $c(x,y) = |x-y|^2$ and admits three equivalent formulations:
\begin{equation}\label{eq.Wasserstein_distance}
    W_2^2(\mu, \nu) 
    \eqdef 
	\min_{\gamma \in \Pi(\mu,\nu)} \int_{\Omega \times \Omega} |x-y|^2 \dd\gamma(x,y)
    =
	\sup_{\substack{\varphi, \psi \in \mathscr{C}_b(\Omega)\\ \varphi \oplus \psi \le c}} 
    \int_\Omega \varphi \dd \mu + \int_\Omega \psi \dd \nu,
\end{equation}
where $\Pi(\mu,\nu)$ is the set of transport plans with marginals $\mu,\nu$. It is well known that this quantity defines a distance in $\mathscr{P}(\Omega)$ which metrizes the narrow convergence of probability measures, \textit{i.e.} convergence in duality with the continuous and bounded functions $\mathscr{C}_b(\Omega)$. The reader is referred to the numerous monographs on the topic for further information~\cite{santambrogio2015optimal,Ambrosio2008GigliSavare,villani2009optimal} for more information. In the sequel we shall give a brief discussion of the relevant properties for the subsequent work. 

For the quadratic case we can always obtain existence of optimal Kantorovich potentials $(\varphi, \psi)$ for the dual problem from~\eqref{eq.Wasserstein_distance}, see for instance~\cite[Prop.~1.11]{santambrogio2015optimal} for a proof in a compact domain. Optimizers for the primal and dual problems are related as 
\[
    \varphi(x) + \psi(y) = |x - y|^2 \quad \gamma-\text{a.e.}
\]
and whenever $\mu \ll \mathscr{L}^d$, Brenier’s theorem says that $\gamma = {(\id, T)}_\sharp\mu$ is concentrated on the graph of a map $T$ which is the gradient of a convex function 
\[
    T = \nabla u, \text{ and $u$ is related to the optimal potential } 
    u = \frac{1}{2}|\cdot|^2 - \varphi.
\]
In this case we also know that the optimal map $T$ is unique and the Kantorovitch potentials $(\varphi, \psi)$ unique up to constants.

The semi-discrete variant of the optimal transportation problem consists of assuming that the first marginal is given by an absolutely continuous measure $\varrho \ll \mathscr{L}^d$ and a discrete measure. Given $\mathbf{x} = {\left(x_i\right)}_{i = 1}^N \in \Omega^N$ and $\mathbf{a}= {\left(a_i\right)}_{i = 1}^N \in \Delta_{N-1}$ the simplex, we let 
\[
    \mu_{\mathbf{x},\mathbf{a}} \eqdef \sum_{i = 1}^N a_i \delta_{x_i},
\]
denote the atomic measure with atoms concentrated over $\mathbf{x}$ and weights $\mathbf{a}$. In this case, dual formulation of the transportation cost can be written as 
\begin{equation}
    \frac{1}{2}
    W_2^2(\varrho, \mu_{\mathbf{x},\mathbf{a}}) 
    = 
    \sup_{\psi \in \mathbb{R}^N} 
    \sum_{i = 1}^N 
    \int_{\Lag_{i}(\mathbf{x}, \psi)}
        \left(
            \frac{1}{2}|x - x_i|^2 - \psi_i
        \right)
    \dd \varrho
    + 
    \sum_{i = 1}^N
    a_i\psi_i,
\end{equation}
where $\mathbf{x} = {\left(x_i\right)}_{i = 1}^N$ and $\Lag_i(\mathbf{x}, \psi)$ denotes the $i$-th \textit{Laguerre cell} induced by $\psi$ and $\mathbf{x}$, defined as
\begin{equation}
    \Lag_i(\mathbf{x}, \psi)
    \eqdef 
    \left\{
        x \in \Omega :  \frac{1}{2}|x - x_i|^2 - \psi_i \le  \frac{1}{2}|x - x_j|^2 - \psi_j, \ \forall j = 1,\dots,N
    \right\}.
\end{equation}
The maximizer is the unique $\psi \in \mathbb{R}^N$, up to adding a constant vector, such that $\varrho(\Lag_i(\mathbf{x}, \psi)) = a_i$ for all $i = 1,\dots, N$. These facts can be obtained via Brenier’s Theorem applied to the semi-discrete case. Throughout the text, we shall fix the notation 
\[
    \Omega_i = \Lag_i(\mathbf{x}, \psi), 
    \quad 
    \Sigma_{ij} = 
    \partial\Lag_i(\mathbf{x}, \psi)\cap 
    \partial\Lag_j(\mathbf{x}, \psi) \text{ for } i\neq j,
\]
whenever the context is clear for instance the Laguerre tesselation is obtained from $W_2^2(\varrho, \sum a_i \delta_{x_i})$ and $\psi$ is optimal, and define the corresponding optimal Kantorovich potential induced by this tesselation
\begin{equation}\label{eq.potential_tesselation}
    \Phi[\mathbf{x},\psi](x) 
    \eqdef 
    \sum_{i = 1}^N \left[
        \frac{1}{2}|x - x_i|^2 - \psi_i
    \right]\mathds{1}_{\Omega_i}(x).
\end{equation}

Since Laguerre cells are convex polygons whose faces are determinate by the points $\mathbf{x}$ and the potential $\psi$, they are stable with respect to these quantities, as stated in the following. 
\begin{lemma}\label{lemma.cont_laguerre}
    Given a sequence ${(\mathbf{x}_n,\psi_n)}_{n \in \mathbb{N}}$ converging to $(\mathbf{x},\psi)$, it holds that
    \[
        \Lag_i(\mathbf{x}_n, \psi_n) \cvstrong{n \to \infty}{L^1} \Lag_i(\mathbf{x}, \psi). 
    \]
\end{lemma}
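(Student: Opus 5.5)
The plan is to prove pointwise convergence of indicator functions almost everywhere and then conclude by dominated convergence, since $\Omega$ is bounded. We interpret $L^1$ convergence of sets as $\mathbbm{1}_{\Lag_i(\mathbf{x}_n,\psi_n)} \to \mathbbm{1}_{\Lag_i(\mathbf{x},\psi)}$ in $L^1(\Omega)$, which is equivalent to $\mathscr{L}^d\bigl(\Lag_i(\mathbf{x}_n,\psi_n)\,\triangle\,\Lag_i(\mathbf{x},\psi)\bigr)\to 0$.

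For each pair $j\neq i$, set
\[
f_{ij}(x;\mathbf{x},\psi) \eqdef \tfrac12|x-x_i|^2-\psi_i-\tfrac12|x-x_j|^2+\psi_j = \langle x, x_j-x_i\rangle + \tfrac12|x_i|^2-\tfrac12|x_j|^2-\psi_i+\psi_j,
\]
an affine function of $x$ whose coefficients depend continuously on $(\mathbf{x},\psi)$. Then $\Lag_i(\mathbf{x},\psi)=\{x\in\Omega: f_{ij}(x)\le 0\ \forall j\}$.

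\emph{Step 1 (strict interior and strict exterior).}
\begin{itemize}
\item Let $x$ satisfy $f_{ij}(x;\mathbf{x},\psi)<0$ for all $j$. By uniform convergence $f_{ij}(\cdot;\mathbf{x}_n,\psi_n)\to f_{ij}(\cdot;\mathbf{x},\psi)$ on the bounded set $\Omega$, we get $x\in\Lag_i(\mathbf{x}_n,\psi_n)$ for all large $n$.
\item Let $x\in\Omega$ satisfy $f_{ij}(x)>0$ for some $j$. Then, for the same reason, $x\notin\Lag_i(\mathbf{x}_n,\psi_n)$ eventually.
\end{itemize}
Hence the indicators converge at every point outside
\[
Z \eqdef \bigcup_{j\neq i}\{x: f_{ij}(x;\mathbf{x},\psi)=0\}.
\]

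\emph{Step 2 ($Z$ is negligible).} This is the only delicate point. When $x_j\neq x_i$, the set $\{f_{ij}=0\}$ is a hyperplane, hence Lebesgue-null. The degenerate case $x_j=x_i$ is the main obstacle. There $f_{ij}$ is the constant $\psi_j-\psi_i$, so $Z$ can be all of $\Omega$ if $\psi_i=\psi_j$, and the convergence may genuinely fail: with coinciding atoms the cells can flip between $\Omega$ and a half-space along the sequence.

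I would therefore add the standing assumption that the limit atoms are pairwise distinct. This is natural in the paper's setting, where atoms of positive mass are distinct and the statement is applied to optimal tessellations. If $x_i=x_j$ with $\psi_i\ne\psi_j$, the constant $f_{ij}$ has a definite sign, and Step 1 still applies because that constraint is uniformly strict. So the precise hypothesis needed is: $x_i\neq x_j$, or $\psi_i\neq\psi_j$, for every $j\ne i$. I would state this explicitly in the proof.

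\emph{Step 3 (conclusion).} Under this hypothesis, $Z$ is a finite union of hyperplanes intersected with $\Omega$, so $\mathscr{L}^d(Z)=0$. The indicators therefore converge $\mathscr{L}^d$-a.e. on $\Omega$. They are bounded by $\mathbbm{1}_\Omega\in L^1$ because $\Omega$ is bounded, so dominated convergence gives the $L^1$ convergence.

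The same argument also yields $\varrho$-a.e. convergence for any $\varrho\ll\mathscr{L}^d$. Hence $\varrho(\Lag_i(\mathbf{x}_n,\psi_n))\to\varrho(\Lag_i(\mathbf{x},\psi))$, which is presumably how the lemma will be used later.
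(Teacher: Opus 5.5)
Your proof is correct. The paper does not actually prove Lemma~\ref{lemma.cont_laguerre}. It only justifies it by remarking that Laguerre cells are convex polytopes whose faces are determined by $(\mathbf{x},\psi)$. Your argument, pointwise convergence of the indicators off the finitely many limiting bisector hyperplanes followed by dominated convergence on the bounded set $\Omega$, is precisely the rigorous version of that remark, so the two routes agree.

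Your added non-degeneracy hypothesis corrects the statement; it is not a weakness of your proof. Suppose $x_i=x_j$ and $\psi_i=\psi_j$, and move only $x_j$ to $x_i+v/n$. The $i$-th cell becomes $\Lag_i(\mathbf{x},\psi)\cap\{\langle x-x_i,v\rangle\le |v|^2/(2n)\}$, which converges to $\Lag_i(\mathbf{x},\psi)\cap\{\langle x-x_i,v\rangle\le 0\}$ rather than to $\Lag_i(\mathbf{x},\psi)$. So the lemma as stated is false. The paper's heuristic tacitly assumes $x_i\neq x_j$, since only then is there a bisecting face.

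One correction to your side remark. The paper does not show that atoms of positive mass stay distinct for the general system of Section~\ref{sec.convergenceMMS}; it does so only for the quantization flow, in Lemma~\ref{lem:longtime_wellposed}. Moreover, if two limit atoms with $a_i,a_j>0$ coincide, every maximizing dual potential has $\psi_i=\psi_j$. Indeed, the integral term depends on $(\psi_i,\psi_j)$ only through $\max(\psi_i,\psi_j)$, while $a_i\psi_i+a_j\psi_j$ strictly increases when the smaller one is raised. That is exactly the case you exclude. So the use of this lemma in the proof of Theorem~\ref{thm.convergenceJKO} implicitly relies on a distinctness property the paper never verifies.
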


It will be particularly important for us to perform variations of the optimal transport cost with respect to a measure, or in the semi-discrete case compute gradients with respect to the atoms. The former can be found for instance in~\cite[Prop.~7.17]{santambrogio2015optimal}. To the derivative of the semi-discrete cost can be expressed in terms of the barycenter of the optimal Laguerre tesselation. Therefore, we introduce the following notation 
\begin{equation}\label{eq.barycenter_def}
    \mathbf{b}[\mathbf{x},\psi] = {\left(b_i[\mathbf{x},\psi]\right)}_{i = 1}^N 
    \text{ where }
    b_i[\mathbf{x},\psi]
    \eqdef 
    \fint_{\Lag_i(\mathbf{x}, \psi)} x \dd \varrho(x).
\end{equation}
Differentiating the semi-discrete transport cost is a very delicate matter, the major issue arises when two points coincide, with the derivatives being more and more singular as the become too close. For this it is useful to introduce the following notation for the generalized diagonal 
\begin{equation}\label{eq.generalized_diagonal}
    \mathbb{D}_{N,\varepsilon} 
    \eqdef 
    \left\{
        |x_i - x_j| \le \varepsilon \text{ for some pair } i\neq j
    \right\},
\end{equation} 
In the sequel we summarize the results which are relevant to us.

\begin{lemma}\label{lemma.first_variation}
    Let $\varrho \in \Pac(\Omega)$, then the following hold: 
    \begin{itemize}
        \item[(i)] The first variation of $W_2^2(\varrho, \nu)$ is given by the unique, up to constants, Kantorovitch potential $\varphi$ w.r.t.~$\varrho$, \textit{i.e.} for any $\bar \varrho \in \Pac(\Omega)$ we have
        \[
            \left. \frac{\dd}{\dd \varepsilon}\right|_{\varepsilon = 0^+}
            W_2^2(\varrho + \varepsilon (\bar\varrho - \varrho), \nu) 
            = 
            \int_{\Omega} \varphi \dd (\bar\varrho - \varrho).
        \]
        \item[(ii)] In the semi-discrete transport case, for $\varrho \in \Pac(\Omega)$ fixed, set
        \[
            F(\mathbf{x}, \mathbf{a}) 
            \eqdef 
            \frac{1}{2}W_2^2(\varrho, \mu_{\mathbf{x},\mathbf{a}}).
        \]
        If $\mathbf{x} \in \Omega^N \setminus \mathbb{D}_{N,\varepsilon}$ and $a_i > 0$ for all $i=1,\dots,N$ then $\mathbf{x} \mapsto F(\mathbf{x},\mathbf{a})$ is twice differentiable, with gradients given by
        \[
            \nabla_{x_i}
            F(\mathbf{x},\mathbf{a}) 
            = 
            \int_{\Omega_i}(x_i - x)\dd \varrho
            =
            a_i\left(x_i - b_i[\mathbf{x},\psi]\right),
        \]
        where $\psi$ is the unique Kantorovitch potential associated with $\mu_{\mathbf{x},\mathbf{a}}$. The second derivatives are given by: for $i \neq j$
        \[
            \nabla_{x_i,x_j} F(\mathbf{x},\mathbf{a})
            = 
            \int_{\Sigma_{ij}} 
            (x - x_i) \otimes (x - x_j) \frac{\varrho(x)}{|x_i - x_j|} \dd \mathscr{H}^{d-1}(x),
        \] 
        and in the diagonal for all $i = 1,\dots, N$
        \[
            \nabla_{x_i,x_i} F(\mathbf{x},\mathbf{a})
            = 
            a_i I_d - \sum_{j \neq i} \nabla_{x_i,x_j} F(\mathbf{x},\mathbf{a}).
        \]
        In addition, if $\varrho$ is continuous, then $\mathbf{x}\mapsto F(\mathbf{x},\mathbf{a})$ is $\mathscr{C}^2$.
    \end{itemize}
\end{lemma}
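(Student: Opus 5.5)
The plan is to handle (i) and (ii) separately, relying on the standard semi-discrete theory in~\cite{santambrogio2015optimal,merigot2021optimal,kitagawa2019convergence}.

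\emph{Part (i).} Set $\varrho_\varepsilon = \varrho + \varepsilon(\bar\varrho - \varrho)$. Let $\varphi_\varepsilon$ be Kantorovich potentials for $(\varrho_\varepsilon,\nu)$, normalized by $\varphi_\varepsilon(x_0)=0$ and taken $c$-concave. Using the dual formulation~\eqref{eq.Wasserstein_distance} with competitor $(\varphi,\psi)$ optimal at $\varepsilon=0$, we get the lower bound
\[
W_2^2(\varrho_\varepsilon,\nu) - W_2^2(\varrho,\nu) \ge \varepsilon\int\varphi\,\dd(\bar\varrho-\varrho).
\]
Using $(\varphi_\varepsilon,\psi_\varepsilon)$ as competitor at $\varepsilon=0$ gives the upper bound $\varepsilon\int\varphi_\varepsilon\,\dd(\bar\varrho-\varrho)$. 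The $c$-concave potentials are uniformly Lipschitz on the compact set $\Omega$, so Arzelà--Ascoli yields uniform convergence of $\varphi_\varepsilon$ along subsequences. The limit is a Kantorovich potential for $(\varrho,\nu)$. Because $\varrho$ is absolutely continuous and $\Omega$ is convex, hence connected, this potential is unique up to constants, so the limit equals $\varphi$ and the whole family converges. Dividing by $\varepsilon$ then gives the claim. This is exactly~\cite[Prop.~7.17]{santambrogio2015optimal}, and I would simply cite it.

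\emph{Part (ii), gradient.} Write $F(\mathbf{x},\mathbf{a}) = \sup_\psi \Phi(\mathbf{x},\psi)$, where
\[
\Phi(\mathbf{x},\psi) = \int \min_i\left(\tfrac12|x-x_i|^2-\psi_i\right)\dd\varrho + \sum_i a_i\psi_i.
\]
For fixed $\psi$, the map $\mathbf{x}\mapsto\Phi$ is differentiable. The reason is that the integrand is a minimum of smooth functions and the ties form the Lebesgue-null cell boundaries, which are hyperplane pieces when $x_i\ne x_j$. Its partial derivative is $\int_{\Omega_i}(x_i-x)\,\dd\varrho$. I then apply an envelope (Danskin) argument. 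The maximizer $\psi(\mathbf{x})$ is unique modulo constants because all $a_i>0$ and $\varrho$ is absolutely continuous on the convex set $\Omega$. It depends continuously on $\mathbf{x}$ by a compactness argument combined with Lemma~\ref{lemma.cont_laguerre}. Together these give $\nabla_{x_i}F = \int_{\Omega_i}(x_i-x)\,\dd\varrho = a_i(x_i-b_i)$, using $\varrho(\Omega_i)=a_i$.

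\emph{Part (ii), Hessian.} Differentiate $G_i(\mathbf{x}) = a_ix_i - \int_{\Omega_i(\mathbf{x},\psi(\mathbf{x}))}x\,\dd\varrho$. The facet $\Sigma_{ij}$ lies in the hyperplane $\langle x, x_j-x_i\rangle = \tfrac12(|x_j|^2-|x_i|^2)+\psi_i-\psi_j$. A Reynolds transport formula for moving polyhedral facets gives the normal velocity of $\Sigma_{ij}$ under perturbations of $x_j$, $x_i$ and $\psi$, with weight $\varrho/|x_i-x_j|$. This yields terms $\int_{\Sigma_{ij}}x\otimes(x-x_j)\,\varrho/|x_i-x_j|\,\dd\mathscr H^{d-1}$, together with terms involving $\partial\psi/\partial\mathbf{x}$. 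These $\psi$-derivative terms are removed by the constraint $\varrho(\Omega_i)=a_i$. Differentiating that constraint gives a linear system whose matrix is the Laplacian-type matrix $\left(\int_{\Sigma_{ij}}\varrho/|x_i-x_j|\right)$. Combining the identities lets one replace $x$ by $x-x_i$ in the first tensor factor, which gives the stated off-diagonal formula. The diagonal formula then follows from translation invariance: $F$ is unchanged when all $x_i$ are shifted together, once we also shift $\varrho$ consistently. Alternatively, one computes it directly; the term $a_iI_d$ comes from differentiating $a_ix_i$.

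\emph{Main obstacle.} The difficulty is justifying the differentiability of $\psi(\mathbf{x})$ and the transport formula when $\varrho$ is only $L^1$. I would follow Kitagawa--Mérigot--Thibert: the Hessian of the Kantorovich functional in $\psi$ is a weighted graph Laplacian, and it is invertible on mean-zero vectors provided the cells carrying positive mass are connected through facets of positive $\varrho$-flux. The implicit function theorem then gives $\psi\in\mathscr C^1$. Since the stated hypothesis is only $\varrho\in\Pac$, I would argue via a.e.\ twice differentiability, or I would accept and cite their regularity under continuity of $\varrho$. When $\varrho$ is continuous, the facet integrals depend continuously on $(\mathbf{x},\psi)$ away from $\mathbb D_{N,\varepsilon}$, and this gives the $\mathscr C^2$ conclusion.
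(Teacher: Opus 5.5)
\paragraph{Parts that match the paper.} Part (i) and the gradient formula in (ii) follow the paper exactly. The paper only cites Santambrogio's Prop.~7.17 for (i) and de Gournay--Kahn--Lebrat for (ii), and writes out just the envelope argument for $\nabla_{x_i}F$.

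One caveat you share with the paper: absolute continuity of $\varrho$ plus convexity of $\Omega$ does not make the potential unique up to constants. Take $\Omega=[0,3]$, $\varrho$ uniform on $[0,1]\cup[2,3]$, and atoms at $\tfrac12,\tfrac52$ with weights $\tfrac12$. The cell boundary can sit anywhere in $[1,2]$. You need something like $\varrho>0$ a.e.\ on $\Omega$. For the gradient this is harmless: the optimal plan is unique, so $\varrho\mres\Omega_i$, and hence $b_i$, do not depend on which optimal $\psi$ you pick.

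\paragraph{The gap in the Hessian.} The Hessian part breaks where you say the $\psi$-derivative terms ``are removed by the constraint $\varrho(\Omega_i)=a_i$''. Differentiating the constraint only says that the total flux $\int_{\partial\Omega_i}\varrho V_n\,\dd\mathscr H^{d-1}$ vanishes. That is what lets you recenter $x$ into $x-x_i$. It does not remove the part of the normal velocity $V_n$ coming from $\dot\psi$, which is present on \emph{every} facet of $\Omega_i$, not just $\Sigma_{ij}$.

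Write $F=\Phi(\mathbf{x},\psi(\mathbf{x}))$ with $\partial_\psi\Phi=0$. The implicit function theorem gives
\[
D^2_{\mathbf{x}}F=\partial^2_{\mathbf{x}\mathbf{x}}\Phi+B^{\top}\mathcal{L}^{+}B,\qquad B:=\partial^2_{\psi\mathbf{x}}\Phi .
\]
Here $\mathcal{L}=-\partial^2_{\psi\psi}\Phi$ is the weighted graph Laplacian with weights $\int_{\Sigma_{kl}}\varrho/|x_k-x_l|\,\dd\mathscr H^{d-1}$, and $\mathcal{L}^{+}$ is its inverse on mean-zero vectors. The Schur-complement term is positive semidefinite, non-local, and in general nonzero. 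The off-diagonal formula in the statement is exactly $\partial^2_{x_ix_j}\Phi$ at frozen $\psi$.

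So that formula cannot be derived, because it is not the Hessian of $\mathbf{x}\mapsto F(\mathbf{x},\mathbf{a})$. A counterexample:
\begin{itemize}
\item Take $d=1$, $\Omega=[0,1]$, $\varrho$ uniform, $N=2$, $a_1=a_2=\tfrac12$, and $0<x_1<x_2<1$.
\item The cells are $[0,\tfrac12]$ and $[\tfrac12,1]$ whatever $\mathbf{x}$ is, so $F$ is separable.
\item Hence $\partial^2_{x_1x_2}F=0$ and $\partial^2_{x_1x_1}F=\tfrac12$.
\item The stated formulas give $-\tfrac18$ and $\tfrac58$ at $(x_1,x_2)=(\tfrac14,\tfrac34)$.
\end{itemize}

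\paragraph{Other points.} Your translation-invariance step also needs repair: $\varrho$ is fixed, so shifting it is not a symmetry of $F$. The identity $\sum_j\nabla_{x_ix_j}F=a_iI_d$ does hold for the true Hessian, for a different reason. Translating all atoms by $v$ and setting $\psi_k\mapsto\psi_k+\langle v,x_k\rangle$ leaves every Laguerre cell unchanged, so $\nabla_{x_i}F=a_ix_i-\int_{\Omega_i}x\,\dd\varrho$ moves by exactly $a_iv$. This fixes the row sums but not the off-diagonal blocks, so paired with the frozen-$\psi$ off-diagonal blocks it still gives a wrong diagonal.

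Finally, as you noted, invertibility of $\mathcal{L}$ on mean-zero vectors needs more than $\varrho\in\Pac(\Omega)$: continuity of $\varrho$ and connectedness of the positive-flux adjacency graph.
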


We do not give a proof for this result, for item $(i)$ the reader is referred to~\cite[Prop.7.17]{santambrogio2015optimal}, while for item $(ii)$ the reader is referred to~\cite{de2019differentiation}. In the latter, there is a simple argument using the enveloppe theorem: letting $\psi$ denote an optimal Kantorovich potential we have 
\begin{align*}
    \nabla_{x_i} F(\mathbf{x}, \mathbf{a}) 
    &= 
    \nabla_{x_i} 
    \left\{
        \int_{\Omega} \min_{j = 1,\dots, N} 
        \left(
            \frac{1}{2}|x - x_j|^2 - \psi_j
        \right)\dd \varrho
        + 
        \sum_{j = 1}^N \psi_j a_j
    \right\}\\ 
    &= 
    \int_{\Lag_i(\mathbf{x}, \psi)}
        \left(
            x_i - x
        \right)\dd \varrho
    = 
    a_i(x_i - b_i[\mathbf{x},\psi]).
\end{align*}
With this characterization of the gradient there is a very simple argument to see that $\mathbf{x} \mapsto F(\mathbf{x},\mathbf{a})$ is semi-concave, see also~\cite{merigot2016minimal}. For simplicity, let us take $a_i = 1/N$ for all $i$, let $F(\mathbf{x})$,$F(\mathbf{y})$ denote the minimal cost of transporting $\varrho$ to the empirical measures over $\mathbf{x}$ and $\mathbf{y}$, respectively. Let $\Omega_i$ denote the optimal Laguerre tesselation associated with $F(x)$, then setting $\bar F(\mathbf{x}) = F(\mathbf{x}) - \frac{1}{2N}\norm{\mathbf{x}}^2$ we have
\begin{align*}
    F(\mathbf{y}) 
    &\le \sum_{i = 1}^N \int_{\Omega_i} \frac{1}{2}|x - y_i|^2\dd \varrho  
    = F(\mathbf{x}) + \sum_{i = 1}^N \int_{\Omega_i} \left(\frac{1}{2}|x - y_i|^2 - \frac{1}{2}|x - x_i|^2\right) \dd \varrho\\ 
    &\le 
    F(\mathbf{x}) - \frac{1}{N}\sum_{i = 1}^N b_i\cdot(y_i-x_i) + \frac{1}{2N} \sum_{i = 1}^N |y_i|^2 - |x_i|^2.
\end{align*}
As a result 
\[
    \bar F(\mathbf{y}) \le \bar F(\mathbf{x}) + \nabla_{\mathbf{x}}\bar F(\mathbf{x})\cdot(\mathbf{y} - \mathbf{x}),
\]
which shows that $\mathbf{x} \mapsto F(\mathbf{x})$ is $1/(2N)$ semi-concave.

\subsection{Minimizing movement schemes}
Our approach to show existence of the PDE-ODE system, we use the so called \textit{minimizing movement scheme}. It consists of an implicit Euler scheme that can be easily adapted to metric spaces, the reader is referred to~\cite{Ambrosio2008GigliSavare} for more details on this general formulation.

For simplicity, we introduce the notation $\mathbf{z} = (\varrho, \mathbf{x}, \mathbf{a}) \in \Pac(\Omega)\times \Omega^{\otimes N} \times \Delta_{N-1}$, where $\mathbf{x} = {(x_i)}_{i = 1}^N$ and $\mathbf{a} = {(a_i)}_{i = 1}^N$. The set $\Delta_{N-1}$ denotes the $(N-1)$-dimensional simplex. Consider the distance between $\mathbf{z} = (\varrho, \mathbf{x}, \mathbf{a})$ and $\mathbf{z}' = (\varrho', \mathbf{x}', \mathbf{a}')$ in this product space defined as
\begin{equation}
    d_{W_2,\ell_2}^2(\mathbf{z},\mathbf{z}') 
    \eqdef 
    W_2^2(\varrho, \bar\varrho) + 
    |\mathbf{x} - \mathbf{x}'|^2 + 
    |\mathbf{a} - \mathbf{a}'|^2,
\end{equation}
where $|\cdot|$ denotes the euclidean distance. Given some $\mathbf{z} = (\varrho, \mathbf{x}, \mathbf{a})$, and we make the slight abuse of writing $\mathscr{E}(\mathbf{z}) = \mathscr{E}(\varrho, \mu_{\mathbf{x},\mathbf{a}})$. 

A \textit{minimizing movement scheme} in this topology consists of a sequence ${\left(\rhotau{k}, \Xtau{k}, \Atau{k}\right)}_{k \in \mathbb{N}}$ such that ${\left(\rhotau{0}, \Xtau{0}, \Atau{0}\right)} = {\left(\varrho_0, \mathbf{x}_0, \mathbf{a}_0\right)}$ is given and for $k \ge 0$ we have 
\begin{equation}\label{eq.MMS_def}
    \begin{multlined}
        {\left(\rhotau{k+1}, \Xtau{k+1}, \Atau{k+1}\right)}_{k \in \mathbb{N}} 
        \in 
        \argmin 
        \mathscr{F}(\varrho) 
        + 
        \mathscr{G}(\mu_{\mathbf{x},\mathbf{a}})
        +
        W_2^2(\varrho, \mu_{\mathbf{x},\mathbf{a}})\\
        \qquad \qquad \qquad \qquad \qquad+ 
        \frac{1}{2\tau}
        \left(
            W_2^2(\rhotau{k}, \varrho)
            + 
            \norm{\Xtau{k} - \mathbf{x}}^2
            +
            \norm{\Atau{k} - \mathbf{a}}^2
        \right).
    \end{multlined}
\end{equation}
It is not difficult to show existence of minimizers for each step of this scheme, however we cannot in general expect uniqueness due to the fact that this energy is semi-concave in $\mathbf{x}$, as discussed in the previous section. Whenever the context is clear we will omit the dependence on $\tau$, and we use also the notation $\mu_k = \mu_{\mathbf{x}_k,\mathbf{a}_k}$ to designate the atomic measure induced by an element of the minimizing movement scheme.  

The following result comes directly from the definition and is standard. 
\begin{lemma}\label{lemma.a_priori_estimates}
    Assuming that $\mathscr{E}(\varrho_0, \mu_0) < +\infty$, there is a constant $C > 0$ depending only on $\Omega$ and $T$ such that
    \begin{align}
        \label{eq.a_priori_energy}
        \mathscr{E}(\rhotau{k+1}, \mutau{k+1}) \le 
        \mathscr{E}(\rhotau{k}, \mutau{k})
        \le \dots \le 
        \mathscr{E}(\varrho_0, \mu_0)
        &\le C, \quad \text{ for all } k\in \mathbb{N}, \tau > 0\\
        \label{eq.a_priori_distance} 
        \sum_{k = 0}^{N_\tau - 1}
        \dwl^2(\Ztau{k}, \Ztau{k+1}) 
        &\le C\tau. 
    \end{align}
\end{lemma}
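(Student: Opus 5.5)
The plan is the standard one: compare each step of \eqref{eq.MMS_def} with the competitor obtained by standing still. Since $(\rhotau{k+1},\Xtau{k+1},\Atau{k+1})$ is a minimizer, testing the functional with $(\varrho,\mathbf{x},\mathbf{a}) = (\rhotau{k},\Xtau{k},\Atau{k})$, for which the penalization vanishes, gives
\[
    \mathscr{E}(\Ztau{k+1}) + \frac{1}{2\tau}\dwl^2(\Ztau{k},\Ztau{k+1}) \le \mathscr{E}(\Ztau{k}).
\]
Dropping the nonnegative distance term yields the monotonicity in \eqref{eq.a_priori_energy}. Iterating down to $k=0$ bounds everything by $\mathscr{E}(\varrho_0,\mu_0)$, which is finite by assumption; this is the constant $C$, and it depends only on the data.

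For \eqref{eq.a_priori_distance}, I will sum the displayed inequality over $k = 0,\dots,N_\tau-1$. The energies telescope, giving
\[
    \frac{1}{2\tau}\sum_{k=0}^{N_\tau-1}\dwl^2(\Ztau{k},\Ztau{k+1}) \le \mathscr{E}(\Ztau{0}) - \mathscr{E}(\Ztau{N_\tau}).
\]
It therefore suffices to bound $\mathscr{E}$ from below uniformly.

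This lower bound is the only point requiring care, and I will get it term by term.
\begin{itemize}
    \item The transport term satisfies $W_2^2\ge 0$.
    \item For $\mathscr{G}$: $g$ is continuous on $[0,1]$ (it is $\mathscr{C}^1$ on $(0,1]$ and tends to $0$ at $0$ by the cusp condition and $g(0)=0$). Hence $\mathscr{G}\ge -N\|g^-\|_\infty$.
    \item For $\mathscr{F}$: $F$ is convex, so $F(t)\ge F(1)+F'(1)(t-1)$. Integrating over the bounded set $\Omega$ against a probability density gives $\mathscr{F}(\varrho)\ge |\Omega|(F(1)-F'(1)) + F'(1)$, a constant depending only on $\Omega$ and $F$. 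Alternatively, Jensen gives $\mathscr{F}(\varrho)\ge |\Omega|F(1/|\Omega|)$.
\end{itemize}
Thus $\mathscr{E}\ge -C_0$, and the sum of squared distances is at most $2\tau(\mathscr{E}(\varrho_0,\mu_0)+C_0)\le C\tau$.

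No real obstacle arises. The only subtleties are that the constant actually depends on the initial energy, $F$, $g$ and $N$ as well, and that finiteness of the initial energy requires $\varrho_0\ll\mathscr{L}^d$. Neither issue affects the argument.
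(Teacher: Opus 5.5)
Your proposal follows the paper's proof: you use the previous iterate as competitor, obtain monotonicity by induction, and telescope the sum. The one difference is that the paper simply invokes $\mathscr{E}\ge 0$, which the hypotheses do not guarantee, since neither $F$ nor $g$ is assumed nonnegative (e.g.\ the entropy is negative when $|\Omega|>1$); your explicit uniform lower bound $\mathscr{E}\ge -C_0$ is therefore the more careful version.

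One small correction: $\lim_{t\to 0^+} g(t)/t=+\infty$ does not force $g(t)\to 0$, so $g$ need not be continuous on $[0,1]$. Your bound $\mathscr{G}\ge -N\|g^-\|_\infty$ still holds, because the cusp condition gives $g>0$ on some $(0,\delta]$, and $g$ is continuous, hence bounded, on $[\delta,1]$.
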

\begin{proof}
    To prove~\eqref{eq.a_priori_energy}, use $(\rhotau{k}, \mutau{k})$ as a competitor for the problem defining $(\rhotau{k+1}, \mutau{k+1})$ to obtain for all $k$ that
    \[
        \frac{1}{2\tau}\dwl^2(\Ztau{k}, \Ztau{k+1}) 
        \le 
        \mathscr{E}(\rhotau{k}, \mutau{k})
        - 
        \mathscr{E}(\rhotau{k+1}, \mutau{k+1}).
    \]
    Since the distance $\dwl$ is non-negative~\eqref{eq.a_priori_energy} follows from a simple induction. 

    Conversely, sum the above inequality in $k$, since the right-hand side telescopes and $\mathscr{E} \ge 0$, we get that
    \[
        \sum_{k = 0}^{N_\tau - 1}
        \dwl^2(\Ztau{k}, \Ztau{k+1}) 
        \le 2\mathscr{E}(\varrho_0, \mu_0)\tau,
    \]
    and the result follows. 
\end{proof}

\subsection{Two interpolations} 
Given a time interval $[0,T]$ and a sequence ${\left(\rhotau{k}, \Xtau{k}, \Atau{k}\right)}_{k \in \mathbb{N}}$ obtained via the minimizing movement scheme, we consider two time interpolations namely
\begin{itemize}
    \item \textbf{Staircase interpolation:} 
    \begin{equation}
        (\varrho^\tau_t, \mathbf{x}^\tau_t, \mathbf{a}^\tau_t)
        \eqdef 
        \left(\rhotau{k}, \Xtau{k}, \Atau{k}\right), \text{ if } t \in (k\tau, (k+1)\tau]
    \end{equation}
    \item  \textbf{Geodesic interpolation:} The second family of curves ${\left(\bar \varrho^\tau,  \mathbf{\bar x}^\tau, \mathbf{\bar a}^\tau \right)}_{\tau > 0}$ now contained in $\C([0,T]; \Pac(\Omega))$ is defined as the geodesic between $\left(\rhotau{k}, \Xtau{k}, \Atau{k}\right)$ and $\left(\rhotau{k+1}, \Xtau{k+1}, \Atau{k+1}\right)$ at times $k\tau$ and $(k+1)\tau$ in the topology induced by $\dwl$. Hence, setting 
    \begin{equation}
        s_t \eqdef \frac{1}{\tau}((k+1)\tau - t), \ T_{k+1}^\tau \text{ is the OT map from $\rhotau{k+1}$ to $\rhotau{k}$},
    \end{equation}
    we define the geodesic interpolation as
    \begin{align}
        \bar \varrho^\tau_t 
        &\eqdef
        {\left(
            T_{k+1}^\tau(t)
        \right)}_\sharp\varrho_{k+1}^\tau,
        \text{ with }
        T_{k+1}^\tau(t) \eqdef 
        \left((1 - s_t) \id
        + 
        s_t T_{k+1}^\tau \right)
        \\ 
        \mathbf{\bar x}^\tau_t 
        &\eqdef 
        (1 - s_t) \mathbf{x}_{k+1}^\tau
        + 
        s_t \mathbf{x}_{k}^\tau\\ 
        \mathbf{\bar a}^\tau_t 
        &\eqdef
        (1 - s_t) \mathbf{a}_{k+1}^\tau
        + 
        s_t \mathbf{a}_{k}^\tau.
    \end{align}
\end{itemize}

While the staircase interpolation is easier to relate to the optimality conditions, see Prop.~\ref{prop.optimality_conditions}, the geodesic interpolation solves by construction the continuity equation with a suitable velocity, as this is a general property of geodesics in the Wasserstein space, see~\cite[Thm.~5.14]{santambrogio2015optimal} and~\cite[Chap.~8]{Ambrosio2005Gradient}. 

Hence we define also the set of velocities, starting with a staircase velocity field
\begin{equation}
    \textbf{v}^\tau_t = \vtau{k+1} = \frac{\id - T_{k+1}^\tau}{\tau}, \text{ for } t \in (k\tau, (k+1)\tau].
\end{equation}
The velocity of the geodesic interpolation can be computed as follows: for an arbitrary $\phi \in \C_c^\infty(\Omega)$ and $t \in (k\tau, (k+1)\tau]$ we have
\begin{align*}
    \frac{\dd}{\dd t}
    \int_\Omega \phi \dd \bar \varrho^\tau_t
    &= 
    \frac{\dd}{\dd t}
    \int_\Omega \phi\circ T_{k+1}^\tau(t) \dd \varrho_{k+1}^\tau 
    = 
    \int_\Omega \nabla \phi\circ T_{k+1}^\tau(t) \cdot 
    \left(\frac{\id - T_{k+1}^\tau}{\tau}\right) \dd \varrho_{k+1}^\tau\\
    &= 
    \int_\Omega \nabla \phi \cdot 
    \underbrace{
        \left(\frac{\id - T_{k+1}^\tau}{\tau}\right)\circ {\left(T_{k+1}^\tau(t)\right)}^{-1} 
    }_{
        = \mathbf{v}^\tau_t
    }
    \dd \bar \varrho^\tau_t.
\end{align*}
Therefore, the geodesic velocity interpolation can be equivalently rewritten as
\begin{equation}
    \mathbf{v}^\tau_t \eqdef \vtau{k+1}\circ{\left(\id + (t - (k+1)\tau)\vtau{k+1}\right)}^{-1}, 
    \text{ for } 
    t \in (k\tau, (k+1)\tau],
\end{equation}
and $(\varrho^\tau, \mathbf{v}^\tau)$ solve the continuity equation. 

This feature can also be expressed in terms of the momentum variables, that is the vector measures in $\mathscr{M}^d([0,T]\times\Omega)$, defined as
\begin{equation}
    E^\tau \eqdef \mathbf{v}^\tau \varrho^\tau
    \text{ and }
    \bar E^\tau \eqdef \mathbf{\bar v}^\tau\bar \varrho^\tau,
\end{equation} 
it holds trivially that
\[
    \partial_t \bar \varrho^\tau_t + \ddiv \bar E^\tau = 0, \text{ for all $\tau>0$},
\]
in the sense of distributions.

Using the a priori estimates from Lemma~\ref{lemma.a_priori_estimates} we have the following result which says that both families of interpolations ${\left(
    \varrho^\tau, \mathbf{x}^\tau, \mathbf{a}^\tau, E^\tau
\right)}_{\tau > 0}$ and ${\left(
    \bar \varrho^\tau, \mathbf{\bar x}^\tau, \mathbf{\bar a}^\tau, \bar E^\tau
\right)}_{\tau > 0}$ enjoy compactness properties and their limits coincide. 

\begin{proposition}[Chap.8~of~\cite{santambrogio2015optimal}]\label{prop.interpo_apriori_limits}
    There is a curve
    \[
        (\varrho_t, \mathbf{x}_t, \mathbf{a}_t, E_t)_{t \in [0,T]} 
        \in 
        \mathscr{C}^{0,1/2}([0,T]; \Pac(\Omega)\times \overline{\Omega} \times \Delta_{N-1}) \times \mathscr{M}^d([0,T]\times\Omega)
    \]
    which is the limit, up to subsequences that are not relabelled, of both families of interpolations ${\left(
    \varrho^\tau, \mathbf{x}^\tau, \mathbf{a}^\tau, E^\tau
    \right)}_{\tau > 0}$ and ${\left(
        \bar \varrho^\tau, \mathbf{\bar x}^\tau, \mathbf{\bar a}^\tau, \bar E^\tau
    \right)}_{\tau > 0}$.  More precisely, it holds that
    \begin{align*}
        \varrho^\tau, \bar \varrho^\tau 
        &\cvstrong{\tau \to 0}{W_2} \varrho 
        \text{ uniformly in } [0,T],\\
        (\mathbf{x}^\tau, \mathbf{a}^\tau), (\mathbf{\bar x}^\tau, \mathbf{\bar a}^\tau )
        &\cvstrong{\tau \to 0}{} (\mathbf{x},\mathbf{a})
        \text{ uniformly in } [0,T],\\ 
        E^\tau, \bar E^\tau 
        &\cvweak{\tau \to 0} E 
        \text{ in the narrow topology of } \mathscr{M}^d([0,T]\times \Omega).
    \end{align*}

    In addition, $\partial_t \varrho_t + \ddiv E_t = 0$, in the sense of distributions. 
\end{proposition}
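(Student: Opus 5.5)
The plan follows the standard compactness argument for the JKO scheme, in the spirit of Chapter 8 of~\cite{santambrogio2015optimal}. Everything rests on the two a priori bounds of Lemma~\ref{lemma.a_priori_estimates}.

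\textbf{Step 1: equicontinuity.} For $s<t$, the triangle inequality in $\dwl$ and Cauchy--Schwarz applied to~\eqref{eq.a_priori_distance} give
\[
\dwl(\Ztau{h},\Ztau{k}) \le \sum_{j=h}^{k-1}\dwl(\Ztau{j},\Ztau{j+1}) \le \sqrt{k-h}\,\sqrt{C\tau}.
\]
For the geodesic interpolation, the metric speed on $(k\tau,(k+1)\tau)$ equals $\dwl(\Ztau{k},\Ztau{k+1})/\tau$. Hence $\int_0^T|\dot{\bar{\mathbf z}}^\tau|^2\,\dd t\le C$, and therefore $\dwl(\bar{\mathbf z}^\tau_s,\bar{\mathbf z}^\tau_t)\le \sqrt{C}|t-s|^{1/2}$. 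The staircase interpolation satisfies the same estimate up to an additive error $\sqrt{C\tau}$. Moreover, the two interpolations are uniformly close: $\sup_t\dwl(\mathbf z^\tau_t,\bar{\mathbf z}^\tau_t)\le \max_k \dwl(\Ztau{k},\Ztau{k+1})\le\sqrt{C\tau}$.

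\textbf{Step 2: compactness of the curves.} The target space $\mathscr P(\Omega)\times\overline\Omega^N\times\Delta_{N-1}$ is compact under $\dwl$, because $\Omega$ is bounded and $W_2$ metrizes narrow convergence. Arzelà--Ascoli then gives uniform convergence of $\bar{\mathbf z}^\tau$ along a subsequence to a $1/2$-Hölder limit. The staircase family converges uniformly to the same limit because the two interpolations are uniformly close. The limit $\varrho_t$ is absolutely continuous: the energy bound~\eqref{eq.a_priori_energy} bounds $\mathscr F(\varrho^\tau_t)$, which is superlinear, and $\mathscr F$ is narrowly l.s.c. Thus $\varrho_t$ lies in $\Pac(\Omega)$ for all $t$.

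\textbf{Step 3: momenta and the continuity equation.} We have
\[
|E^\tau|([0,T]\times\Omega)\le \int_0^T\!\!\int|\mathbf v^\tau|\,\dd\varrho^\tau\,\dd t \le \sqrt T\Bigl(\int_0^T\|\mathbf v^\tau_t\|^2_{L^2(\varrho^\tau_t)}\,\dd t\Bigr)^{1/2},
\]
and the last integral equals $\sum_k W_2^2(\rhotau{k},\rhotau{k+1})/\tau\le C$. The same bound holds for $\bar E^\tau$, since the geodesic velocity has the same $L^2$ norm at each time. Bounded total mass on a compact set gives narrow subsequential limits $E$ and $\bar E$.

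The step I expect to be the main obstacle is showing that $E=\bar E$. For this I would test against a Lipschitz $\phi(t,x)$. Writing $\bar E^\tau=(T^\tau_{k+1}(t))_\sharp(\mathbf v_{(k+1)}\varrho_{k+1})$, one gets
\[
\Bigl|\int\phi\cdot\dd(\bar E^\tau-E^\tau)\Bigr|\le \mathrm{Lip}(\phi)\int_0^T\!\!\int |T^\tau_{k+1}(t)-\id|\,|\mathbf v|\,\dd\varrho^\tau\,\dd t \le \mathrm{Lip}(\phi)\,\tau\int_0^T\|\mathbf v^\tau_t\|^2_{L^2}\,\dd t\le C\tau\to0.
\]
Density of Lipschitz functions then identifies the limits.

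Finally, $\partial_t\bar\varrho^\tau+\ddiv\bar E^\tau=0$ holds in distributions. We pass to the limit using the uniform $W_2$ convergence of $\bar\varrho^\tau$ (in the $\partial_t\phi$ term) and the narrow convergence of $\bar E^\tau$ (in the $\nabla\phi$ term). This yields $\partial_t\varrho+\ddiv E=0$.
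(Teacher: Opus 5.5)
Your proof is correct. It is the standard Chapter~8 argument of~\cite{santambrogio2015optimal} that the paper cites instead of writing out, carried over to the product metric $\dwl$: equi-H\"older bounds from $\sum_k \dwl^2(\Ztau{k},\Ztau{k+1})\le C\tau$, Arzel\`a--Ascoli on the compact product space, an $L^1$ bound on the momenta from the $L^2$ velocity bound, and the $O(\tau)$ comparison of $E^\tau$ and $\bar E^\tau$ against Lipschitz test functions. One small point: writing $E^\tau=\mathbf{v}^\tau_{(k+1)}\varrho^\tau_{k+1}$ on $(k\tau,(k+1)\tau]$ uses the staircase convention $\varrho^\tau_t=\varrho^\tau_{k+1}$ there, which is the choice consistent with the velocity definition, whereas the paper's displayed staircase formula has an index slip.
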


\section{Convergence of MMS}\label{sec.convergenceMMS}
Given the results of Section~\ref{sec.MMS}, in other to prove the convergence of the minimizing movement scheme to the coupled system~\eqref{eq.PDE_ODEsysthem}, we only need to characterize the limit momentum variable $E$ and the dynamics of the variables $\mathbf{x},\mathbf{a}$. This will be done in the sequel with the Euler-Lagrange equations characterizing the optimality of the minimizing movement. 

However, there are still multiple difficulties in characterizing the limit curve. To deal with the non-linear term $\Delta P(\varrho)$, we will require a stronger convergence of $\varrho^\tau$ to $\varrho$, which can be done with well-established results in the literature. 

The evolution of the discrete measure is more subtle. Regarding the evolution of $\mathbf{x}$, we need to be careful with the boundary effects, since the points are restricted to $\Omega$, the optimality conditions must push points to the interior of the domain if the minimizer of any step is on the boundary. These boundary effects however cannot be passed onto the limit as $\tau \to 0$, unless we assume that $\partial \Omega$ is smooth. Instead, we show in Prop.~\ref{prop.atoms_interior} that if at an iteration of the minimizing movement scheme a point is in the interior of $\Omega$, then it remains in the interior as long as its mass remains positive. 

This leads us to the heart of the matter: what about the masses $\mathbf{a}$? As the dynamics around $\mathbf{a}$ contain the singular term $g'$, it is not clear what happens to a point when its mass reaches $0$. We show in Prop.~\ref{prop.estimate_ai} that if ${(\varrho_t, \mathbf{x}_t, \mathbf{a}_t)}_{t \ge 0}$ is a curve obtained from the minimizing movement scheme, then if at some time $a_i(t) = 0$, then it remains null of any $s > t$. This means that we only need to characterize the dynamics of $a_i$ until the first time that it vanishes.

\subsection{Optimality conditions}\label{sec.optimality_conditions}

In order to derive optimality conditions, we will need the notion of \textit{Bouligand's tangent cone}
\begin{equation}\label{eq.tangent_cone}
  T_\Omega(z)
  \eqdef 
  \left\{
    v = \lim_{n \to \infty} v_n: 
    z + \varepsilon_n v_n \in \Omega, \ \varepsilon_n \to 0
  \right\}
\end{equation}
see~\cite[Def.~5.1.1]{hiriart1996convex}, which gives the admissible directions to perform variations in $\Omega$. Likewise, the \textit{normal cone} is then given by the polar cone of $T_\Omega(z)$, see for instance~\cite[Part I, Prop.~5.2.4]{hiriart1996convex}. In other words, it can be written as
\begin{equation}\label{eq.normal_cone}
  N_\Omega(z)
  \eqdef 
  \left\{
    n \in \mathbb{R}^d : \inner{n,v} \le 0 \text{ for all } v \in T_\Omega(z)
  \right\}.
\end{equation}
Whenever $\Omega$ has a smooth boundary $N_\Omega(z)$ is generated by the outwards normal vector. 

\begin{proposition}\label{prop.optimality_conditions}
    Let $\left(\rhotau{k+1}, \Xtau{k+1}, \Atau{k+1} \right)$ be a minimizer of~\eqref{eq.MMS_def}, and define the set 
    \begin{equation}\label{eq.inactive_set_weights}
      I_{k+1} 
      \eqdef 
      \left\{
        i = 1,\dots,N : 0 < a_{k+1,i}^\tau < 1
      \right\},
    \end{equation}
    corresponding to the indexes for which the constraints on the weights are not active. Then it holds that
    \begin{equation*}
      \begin{cases}
        \displaystyle
        -\vtau{k+1}
        = 
        \displaystyle
        \nabla F'(\rhotau{k+1}) 
        + 
        \sum_{i= 1}^N (x-\xtau{k+1,i})\mathbbm{1}_{\Omega^\tau_{k+1,i}}& 
        \rhotau{k+1}-a.e.
        \\
        & \\ 
        \displaystyle
        0 
        \displaystyle
        \in
        \left(
          \frac{\xtau{k+1,i} - \xtau{k,i}}{\tau}
          +
          \atau{k+1,i}\cdot\xtau{k+1,i} 
          - \int_{\Omega^\tau_{k+1,i}}x \dd \rhotau{k+1}
        \right) + 
        N_{\Omega}(x_{k+1,i}) ,& \text{} i = 1,\dots,N\\ 
        & \\ 
        \displaystyle 
        -\frac{\atau{k+1,i} - \atau{k,i}}{\tau} 
        =  
        g'(\atau{k+1,i}) + \psi^\tau_{k+1,i},
        & \text{ for all } i \in I_{k+1}\\ 
        & \\ 
        \displaystyle 
        \Omega^\tau_{k+1,i} = \Lag_i\left(\psi^\tau_{k+1}, \Xtau{k+1}\right),& 
        \text{} i = 1,\dots,N.
      \end{cases}
    \end{equation*}
    where $\psi_{k+1}^\tau$ is a Kantorovitch potential associated with $\mutau{k+1}$ in $W_2^2\left(\rhotau{k+1},\mutau{k+1}\right)$.
  \end{proposition}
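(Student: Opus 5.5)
The plan is to derive each line of the optimality system by perturbing one block of variables at a time, keeping the others fixed at the minimizer $(\rhotau{k+1}, \Xtau{k+1}, \Atau{k+1})$. The key tool is Lemma~\ref{lemma.first_variation}.

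\emph{Density equation.} Fix $(\Xtau{k+1},\Atau{k+1})$, so $\mu^\tau_{k+1}$ is fixed, and perturb $\varrho_\varepsilon = \rhotau{k+1} + \varepsilon(\bar\varrho - \rhotau{k+1})$ with $\bar\varrho \in \Pac(\Omega)$ of finite energy. By Lemma~\ref{lemma.first_variation}(i), applied to both $W_2^2(\cdot,\mu^\tau_{k+1})$ and $W_2^2(\rhotau{k},\cdot)$, together with the standard first variation of $\mathscr{F}$, we get
\[
\int_\Omega \Bigl(F'(\rhotau{k+1}) + \Phi[\Xtau{k+1},\psi^\tau_{k+1}] + \tfrac{\varphi_{k+1}}{\tau}\Bigr)\dd(\bar\varrho - \rhotau{k+1}) \ge 0 .
\]
Here $\Phi$ is the potential~\eqref{eq.potential_tesselation}, with the factor $1/2$ absorbed consistently with the scaling of $W_2^2$ in~\eqref{eq.MMS_def}, and $\varphi_{k+1}$ is the Kantorovich potential from $\rhotau{k+1}$ to $\rhotau{k}$. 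The usual argument in~\cite[Prop.~8.7, Lemma~8.6]{santambrogio2015optimal} then shows that this sum equals a constant $\rhotau{k+1}$-a.e. and is at least that constant elsewhere. Superlinearity of $F$ handles the lack of positivity of the density.

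We then differentiate. The functions $\Phi$ and $\varphi_{k+1}$ are Lipschitz. Since $F'$ is increasing, $F'(\rhotau{k+1})$ is Lipschitz on $\{\rhotau{k+1}>0\}$ after truncation, so $\nabla F'(\rhotau{k+1})$ makes sense a.e. We use $\nabla\varphi_{k+1}/\tau = (\id - T^\tau_{k+1})/\tau = \vtau{k+1}$ and $\nabla \Phi = \sum_i (x - \xtau{k+1,i})\mathbbm{1}_{\Omega^\tau_{k+1,i}}$ a.e., the cell boundaries being Lebesgue-null. This gives the first line.

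\emph{Atom equation.} Fix $\rhotau{k+1}$ and $\Atau{k+1}$, and move only $x_i$ along an admissible direction $v \in T_\Omega(\xtau{k+1,i})$. We may assume all weights are positive; atoms with $a_i=0$ do not enter the cost, and the inclusion for them reduces to a trivial projection statement. We also need the atoms distinct. If two atoms coincide we merge them, or we use only one-sided derivatives.

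By the envelope argument after Lemma~\ref{lemma.first_variation}, or by semi-concavity, which yields one-sided directional derivatives, the directional derivative of $\tfrac12 W_2^2$ in direction $v$ is bounded above by $\int_{\Omega^\tau_{k+1,i}}(x_i - x)\cdot v\,\dd\rhotau{k+1}$. Minimality gives
\[
\Bigl(\tfrac{\xtau{k+1,i}-\xtau{k,i}}{\tau} + \atau{k+1,i}\xtau{k+1,i} - \int_{\Omega^\tau_{k+1,i}} x\,\dd\rhotau{k+1}\Bigr)\cdot v \ge 0
\]
for all $v \in T_\Omega$. By definition~\eqref{eq.normal_cone}, this means minus the bracket lies in $N_\Omega$, which is the inclusion.

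\emph{Weight equation.} For $i,j \in I_{k+1}$ we perturb $a_i \mapsto a_i+\varepsilon$ and $a_j\mapsto a_j-\varepsilon$, which stays in the simplex for small $|\varepsilon|$. The dual formula shows that $\mathbf{a}\mapsto \tfrac12W_2^2(\varrho,\mu_{\mathbf{x},\mathbf{a}})$ is concave, being a supremum of linear functions, with supergradient $\psi$, and it is differentiable when $\psi$ is unique up to constants. Two-sided variations then give
\[
\tfrac{\atau{k+1,i}-\atau{k,i}}{\tau} + g'(\atau{k+1,i}) + \psi_i = \lambda \quad \text{for all } i \in I_{k+1}.
\]
The multiplier $\lambda$ is absorbed into the additive constant of $\psi^\tau_{k+1}$, which gives the third line.

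\emph{Main obstacle.} I expect the hardest step to be justifying the density variation: the support issues and the a.e.\ differentiation of $F'(\rhotau{k+1})$, where we must show $P(\rhotau{k+1})$ or $F'(\rhotau{k+1})$ is Lipschitz via the equality with Lipschitz potentials. Uniqueness of the potentials matters here too. For $\mathbf{a}$ a nondegenerate $\varrho$ gives uniqueness only up to constants, and we need positive density on a connected set. We would use that the minimizer has full support for entropy-type $F$, or otherwise restrict to one-sided supergradient inequalities, which suffice for the stated relations.
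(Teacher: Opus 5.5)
Your plan follows the paper's proof. You vary one block of variables at a time: the first-variation formula with Brenier's map for $\varrho$, directions in Bouligand's cone plus polarity for the atoms, and two-index mass transfers inside $I_{k+1}$ for the weights, with the multiplier absorbed into the additive constant of $\psi^\tau_{k+1}$. Your atom step is slightly more robust than the paper's. Bounding the change of the transport cost from above, using the current cells as a competitor plan, is one-sided and does not need the differentiability hypotheses of Lemma~\ref{lemma.first_variation}(ii).

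One step is wrong as stated, in the weight equation. Write $W(\mathbf{a})=\tfrac12W_2^2(\varrho,\mu_{\mathbf{x},\mathbf{a}})$. The dual formula expresses $W$ as a supremum over $\psi$ of functions that are affine in $\mathbf{a}$, so $W$ is convex, not concave, and an optimal $\psi$ is a subgradient. For the standard basis vectors $e_i,e_j$ this gives $W(\mathbf{a}+\varepsilon(e_i-e_j))-W(\mathbf{a})\ge\varepsilon(\psi_i-\psi_j)$. That is a lower bound, which says nothing at a minimizer. So your fallback of ``one-sided supergradient inequalities'' when the potential is not unique does not work.

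Your main line survives. Suppose $\psi_i-\psi_j$, for $i,j\in I_{k+1}$, does not depend on the choice of optimal potential; this holds e.g.\ when $\varrho^\tau_{k+1}>0$ a.e.\ for the entropy. Then $W$ is differentiable along the directions $e_i-e_j$, and the two-sided variations give the equality with a common constant. This is exactly what the paper does, and it also tacitly uses uniqueness up to constants when it swaps $i$ and $j$.

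Without uniqueness, the right tool is Fermat's rule for a convex plus $\mathscr{C}^1$ function, applied on the face of the simplex whose relative interior contains $\mathbf{a}^\tau_{k+1}$. By Fenchel duality the subgradients of $W$ are the optimal dual potentials. The rule then yields a single optimal $\psi$ satisfying the relation for all $i\in I_{k+1}$ at once, which is all the statement claims.
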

  In the proof of this result, since $\tau$ is fixed we omit shall it, in order to simplify notation.   
\begin{proof}
    The optimality conditions for $\varrho_{k+1}$ are very similar to the classical theory in bounded domains, see~\cite[Chap.~7,8]{santambrogio2015optimal}, hence we focus mostly on the analysis of $\mathbf{x}_{k+1}$, and $\mathbf{a}_{k+1}$. 

    \textbf{\underline{Optimality conditions for $\varrho$:}}
      Since for any $k \in \mathbb{N}$, the internal energy $\mathscr{F}$ enforces that $\varrho_{k+1} \ll \mathscr{L}^d$, we can use the first variation formulas from Lemma~\ref{lemma.first_variation}. Let us first derive the optimality conditions for $\varrho_{k+1}$, so given $\rho \in \Pac(\Omega)$ consider a variation $\varrho_\varepsilon \eqdef \varrho_{k+1} + \varepsilon(\rho - \varrho_{k+1})$. Optimality gives that
      \[
          -\left. 
          \frac{1}{2\tau}  
          \frac{\dd}{\dd \varepsilon}\right|_{\varepsilon = 0^+}
          W_2^2\left(\varrho_k, \varrho_\varepsilon\right)
          \le 
          \left. \frac{\dd}{\dd \varepsilon}\right|_{\varepsilon = 0^+}
          \mathscr{F}(\varrho_\varepsilon)
          +
          \left. \frac{\dd}{\dd \varepsilon}\right|_{\varepsilon = 0^+}
          \frac{1}{2}W_2^2\left(\varrho_\varepsilon, \mu_{k+1}\right). 
      \]
      Using Lemma~\ref{lemma.first_variation}, we obtain that
      \[
        -\frac{\psi_{k+1}^\varrho}{\tau}
        = 
        F'(\varrho_{k+1}) 
        + 
        \varphi_{k+1}^{\varrho,\mu},
      \]
      where $\psi_{k+1}^\varrho$ and $\varphi_{k+1}^{\varrho,\mu}$ are the Kantorovitch potentials associated with $\varrho_{k+1}$ in $W_2^2(\varrho_k, \varrho_{k+1})$ and $W_2^2(\varrho_{k+1},\mu_{k+1})$, respectively.
  
      On the other hand, from the optimality conditions of the optimal transportation problem, it holds that
      \[
        \varphi_{k+1}^{\varrho,\mu}(x) + \psi_{k+1,i} = \frac{1}{2}\norm{x - x_{k+1,i}}^2, \quad \text{ for } \varrho_{k+1}-a.e.~ x \in \Omega_{k+1,i}. 
      \]
      Since $\Omega_{k+1,i}$ is a convex polyhedra, its boundary is $\varrho_{k+1}$ negligible, and it follows that
      \[
        \nabla \varphi_{k+1}^{\varrho,\mu}(x) 
        = 
        \sum_{i = 1}^N (x - x_{k+1,i}) \mathbbm{1}_{\Omega_{k+1,i}}.
      \]
      In addition, from Brenier's Theorem we know that the optimal map from $\varrho_{k+1}$ to $\varrho_k$ is given by $T_{k+1} = \id - \nabla \psi_{k+1}^{\varrho}$. Combining these results we obtain that
      \[
        \frac{T_{k+1} - \id}{\tau}
        = 
        \nabla F'(\varrho_{k+1}) + 
        \sum_{i= 1}^N (x-x_{k+1,i})\mathbbm{1}_{\Omega_{k+1,i}}.
      \]

      \textbf{\underline{Optimality conditions for $\mathbf{x}$:}}
      To derive the optimality conditions w.r.t.~the atoms $\mathbf{x}_{k+1} = {\left(x_{k+1,i}\right)}_{i=1}^N$, fix some $i$ and consider a direction in the \textit{Bouligand's tangent cone}, $v \in T_\Omega(x_{k+1,i})$, where
      \[
        T_\Omega(z)
        \eqdef 
        \left\{
          v = \lim_{n \to \infty} v_n: 
          z + \varepsilon_n v_n \in \Omega, \ \varepsilon_n \to 0
        \right\},
      \]
      see~\cite[Def.~5.1.1]{hiriart1996convex}. Hence, take $\varepsilon_n$ and $v_n$ converging to $v$ as in the definition above and set 
      \[
        \bar x_{j,\varepsilon_n} 
        \eqdef 
        \begin{cases}
          x_{k+1,j},& \text{ if } j\neq i,\\ 
          x_{k+1,i} + \varepsilon_n v_n,& j = i,
        \end{cases}
      \]
      and the define a variation of the atomic measures as 
      \[
        \mu_{\varepsilon_n} \eqdef 
        \sum_{j = 1}^N a_{k+1,j} \delta_{\bar x_{j,\varepsilon_n}}. 
      \]
      Comparing the energies of $\mu_{k+1}$ and $\mu_{\varepsilon_n}$ with $\varrho_{k+1}$ fixed, we obtain from Lemma~\ref{lemma.first_variation} that
      \begin{align*}
        0 
        &\le 
        \lim_{n \to \infty}
        \left(
          \frac{1}{2\tau}\frac{
            \norm{\bar x_{i,\varepsilon_n}  - x_{k,i}}^2
            -
            \norm{x_{k+1,i} - x_{k,i}}^2
          }{\varepsilon_n}
          + 
          \frac{
            W_2^2(\varrho_{k+1},\mu_{\varepsilon_n})
            -
            W_2^2(\varrho_{k+1},\mu_{k+1})
          }{\varepsilon_n}
        \right)  \\ 
        &= 
        \lim_{n \to \infty}
        \left(
        \inner{
          \frac{x_{k+1,i} - x_{k,i}}{\tau}, v_n
        }
        + 
        \partial_{x_i} \frac{1}{2} W_2^2
        \left(
            \varrho_{k+1}, \mu_{k+1}
        \right)\cdot v_n + \frac{o(\varepsilon_n)}{\varepsilon_n}
        \right)\\ 
        &=
        \inner{
          \frac{x_{k+1,i} - x_{k,i}}{\tau}
          +
          \int_{\Lag_i(\psi_{k+1}; \mathbf{x}_{k+1})}(x_{k+1,i} - x)\dd \varrho_{k+1}
          , v
        }.
      \end{align*}
      Recalling, for instance from~\cite[Part I, Prop.~5.2.4]{hiriart1996convex}, that the normal space $N_\Omega(x_{k+1,i})$ is given by the polar cone to $T_\Omega(x_{k+1,i})$, that $\Omega_{k+1,i} = \Lag_i(\psi_{k+1},X_{k+1})$ and $\varrho_{k+1}(\Omega_{k+1,i}) = a_{k+1,i}$ we obtain that for all $i = 1,\dots,N$
      \[
        -
        \left(
          \frac{x_{k+1,i} - x_{k,i}}{\tau} 
          + a_{k+1,i}\cdot x_{k+1,i} 
          - \int_{\Omega_{k+1,i}}x \dd \varrho_{k+1}
        \right)
        = 
        n_{k+1,i}
        \in N_{\Omega}(x_{k+1,i}).
      \]


      \textbf{\underline{Optimality conditions for $\mathbf{a}$:}}
      In the sequel, we perform variations for the mass variables ${\left(a_{k+1,i}\right)}_{i = 1}^N$. Consider two indexes $i,j \in I_{k+1}$ and construct the variation $\mathbf{a}_\varepsilon = {(a_{\varepsilon,h})}_{h = 1}^N$ given by
      \[
        a_{\varepsilon,h} 
        \eqdef 
        \begin{cases}
          a_{k+1,h},& h \neq i,j,\\ 
          a_{k+1,j} - \varepsilon,& h = j,\\ 
          a_{k+1,i} + \varepsilon,& h = i. 
        \end{cases}
      \]
      And define the new measure $\mu_\varepsilon \eqdef \mu_{\mathbf{x}_{k+1}, \mathbf{a}_\varepsilon}$. For $\varepsilon$ small enough this variation belongs to the simplex $\Delta_{N-1}$ and hence is admissible. As a result, comparing the energies of $(\varrho_{k+1}, \mu_{k+1})$ and $(\varrho_{k+1}, \mu_{\varepsilon})$ we obtain that 
      there is a Kantorovitch potential $\bar \psi_{k+1}$ such that 
      \[
        \frac{a_{k+1,i} - a_{k,i}}{\tau} + g'(a_{k+1,i}) + \bar\psi_{k+1,i}
        \ge
        \frac{a_{k+1,j} - a_{k,j}}{\tau}  + g'(a_{k+1,j}) + \bar \psi_{k+1,j}.
      \]
      In addition, we can assume that $\bar \psi_{k+1}$ is bounded in $\ell^\infty$-norm by a constant $C_\Omega$, since by Brenier's Theorem it the evaluation of a Lipschitz function over the points $\{x_{k+1,i}\}$ and uniquely defined up to constants, so that we can assume that $\bar \psi_{k+1,1} = 0$ and the bound follows from the Lipschitz continuity and the fact that $\Omega$ has finite diameter. 

      Changing the role of $i$ and $j$, it follows that we have the equality
        \begin{multline*}
          \frac{a_{k+1,i} - a_{k,i}}{\tau} + g'(a_{k+1,i}) + \bar\psi_{k+1,i}
        \\ = 
        \frac{a_{k+1,j} - a_{k,j}}{\tau}  + g'(a_{k+1,j}) + \bar \psi_{k+1,j} 
        = c_{k+1}, \text{ for all } i,j \in I_{k+1}.
      \end{multline*}
      The constant $c_{k+1}$ above can be obtained by averaging all these quantities, namely 
      \begin{align*}
        c_{k+1} 
        &= 
        \frac{1}{|I_{k+1}|}
        \left(
        \sum_{j \in I_{k+1}} \bar \psi_{k+1,j} + g'(a_{k+1,j}) + 
        \sum_{j \in I_{k+1}} \frac{a_{k+1,j} - a_{k,j}}{\tau}
        \right)\\ 
        &=
        \frac{1}{|I_{k+1}|}
        \left(
        \sum_{j \in I_{k+1}} \bar \psi_{k+1,j} + g'(a_{k+1,j}) + 
        \sum_{j \in I_k\setminus I_{k+1}} \frac{a_{k,j}}{\tau}
        \right),
      \end{align*}
      where the second equality comes from the fact that both $a_{k+1,i}$ and $a_{k,i}$ sum to $1$.

      As a result, by adding the constant $c_{k+1}$ to the Kantorovitch potentials, \textit{i.e.}
      $
        \psi_{k+1,i} \eqdef \bar \psi_{k+1,i} + c_{k+1},
      $
      they remain optimal we have the equality 
      \[
        -\frac{a_{k+1,i} - a_{k,i}}{\tau} 
        = 
        g'(a_{k+1,i}) + \psi_{k+1,i} \text{ for all } i \in I_{k+1},
      \]  
      which concludes the proof.
  \end{proof}

In the previous proposition, one would like to obtain more information on the minimizers to remove the dependence on the normal cones of $\Omega$ and $\Delta_{N-1}$. The next result gives a natural criterion for a point $x_{k+1,i}$ to be in the interior of $\Omega$, so that the normal cone $N_{\Omega}(x_{k+1,i})$ is null. 

\begin{figure}[t]\label{figure.discrete_time_interior_argument}
  \centering
  \input{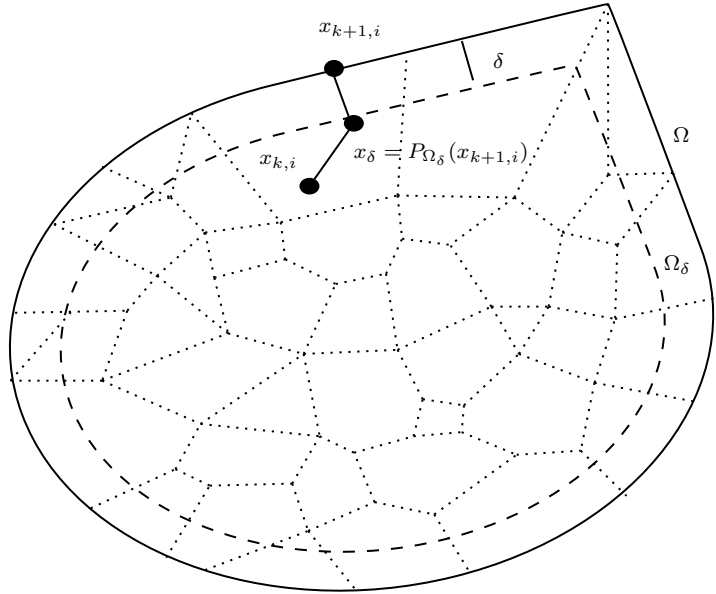}
  \caption{If $x_{k,i}$ belongs in the interior of $\Omega$ and $x_{k+1,i}$ is in the boundary $\partial \Omega$, we can produce a better competitor by projecting $x_{k+1,i}$ to the convex set $\Omega_\delta$ of points whose distance to the boundary is at least $\delta$.}
\end{figure}
\begin{proposition}\label{prop.atoms_interior}
    Let $\Omega \subset \mathbb{R}^d$ be compact and convex, and let $\left(\rhotau{k+1}, \Xtau{k+1}, \Atau{k+1} \right)$ be a minimizer of~\eqref{eq.MMS_def}. Then for all $i=1,\dots,N$ if $a_{k+1,i}>0$ and $x_{k,i} \in \intt \Omega$, it also holds that $x_{k+1,i} \in \intt \Omega$. 
\end{proposition}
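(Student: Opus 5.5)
Following the figure, we argue by contradiction. Suppose $a_{k+1,i}>0$, $x_{k,i}\in\intt\Omega$ and $x_{k+1,i}\in\partial\Omega$. We will build a competitor that strictly lowers the functional in~\eqref{eq.MMS_def}. It keeps $\rhotau{k+1}$, $\Atau{k+1}$ and all other atoms unchanged and replaces $x_{k+1,i}$ by $y\eqdef \proj_{\Omega_\delta}(x_{k+1,i})$. Here $\Omega_\delta\eqdef\{x\in\Omega:\dist(x,\partial\Omega)\ge\delta\}$, and $\delta>0$ is small enough that $x_{k,i}\in\Omega_\delta$. The set $\Omega_\delta$ is convex and nonempty for small $\delta$, so the projection is well defined.

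The comparison has two parts. First, the proximal term. Since $x_{k,i}\in\Omega_\delta$, the projection onto a convex set is $1$-Lipschitz and fixes $x_{k,i}$. Moreover, the obtuse-angle property $\inner{x_{k+1,i}-y,\,x_{k,i}-y}\le 0$ gives
\[
|y-x_{k,i}|^2\le |x_{k+1,i}-x_{k,i}|^2-|x_{k+1,i}-y|^2 .
\]
So this term decreases by at least $\frac{1}{2\tau}|x_{k+1,i}-y|^2$.

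Second, the transport term. Keep the Laguerre cell $\Omega_{k+1,i}$ as the (possibly suboptimal) set of points sent to atom $i$; this gives an admissible plan between $\rhotau{k+1}$ and the modified atomic measure. The cost change is then
\[
\int_{\Omega_{k+1,i}}\bigl(|x-y|^2-|x-x_{k+1,i}|^2\bigr)\dd\rhotau{k+1}
= a_{k+1,i}\bigl(|y|^2-|x_{k+1,i}|^2\bigr)-2a_{k+1,i}\inner{b_i,\,y-x_{k+1,i}},
\]
where $b_i=b_i[\Xtau{k+1},\psi_{k+1}]$ is the barycenter. This equals
\[
a_{k+1,i}\Bigl(|y-b_i|^2-|x_{k+1,i}-b_i|^2\Bigr).
\]
If $b_i\in\Omega_\delta$, the same projection inequality shows this quantity is $\le 0$. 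The transport cost then does not increase, while the proximal term strictly decreases because $y\neq x_{k+1,i}$. Strict decrease contradicts minimality. Note that $\mathscr{F}$ and $\mathscr{G}$ are untouched by this modification.

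The main obstacle is ensuring $b_i\in\Omega_\delta$ for some $\delta>0$, i.e.\ that the barycenter of the cell stays away from $\partial\Omega$. This is where $a_{k+1,i}>0$ and the absolute continuity of $\rhotau{k+1}$ enter. By Jensen, $b_i$ lies in the closed convex hull of $\Omega_{k+1,i}$. If $b_i\in\partial\Omega$, take a supporting hyperplane $H$ at $b_i$. Then $\inner{x-b_i,\nu}\le 0$ on $\Omega$ with equality on average, so $\rhotau{k+1}\mres\Omega_{k+1,i}$ would be concentrated on $H$. That is a Lebesgue-null set, contradicting $a_{k+1,i}>0$ and $\rhotau{k+1}\ll\mathscr{L}^d$. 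Hence $b_i\in\intt\Omega$, and we may choose
\[
\delta<\min\bigl\{\dist(x_{k,i},\partial\Omega),\ \dist(b_i,\partial\Omega)\bigr\}.
\]
Finally, $x_{k+1,i}\in\partial\Omega$ forces $y\neq x_{k+1,i}$, which gives the strict decrease and closes the contradiction. Therefore $x_{k+1,i}\in\intt\Omega$.
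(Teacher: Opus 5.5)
Your proof is correct, and at the decisive step it differs from the paper's. Both arguments use the same competitor, which replaces $x_{k+1,i}$ by its projection onto $\Omega_\delta$ and keeps the Laguerre cells frozen. Both also use the same fact that the proximal term does not increase.

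The paper then bounds $|x-x_\delta|^2-|x-x_{k+1,i}|^2$ pointwise. It gets at most $-\delta^2$ on $\Omega_{k+1,i}\cap\Omega_\delta$ and at most $\delta^2$ on the boundary layer $\Omega_{k+1,i}\setminus\Omega_\delta$, whose $\varrho_{k+1}$-mass is made smaller than $a_{k+1,i}/2$ by integrability of $\varrho_{k+1}$.

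You instead integrate first. The frozen-cell cost change is then exactly $a_{k+1,i}\bigl(|y-b_i|^2-|x_{k+1,i}-b_i|^2\bigr)$, and everything reduces to $b_i\in\Omega_\delta$. You obtain this from $\varrho_{k+1}\ll\mathscr{L}^d$ and $a_{k+1,i}>0$ by the supporting-hyperplane argument.

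Your route buys an explicit decrease of at least $\bigl(a_{k+1,i}+\frac{1}{2\tau}\bigr)|x_{k+1,i}-y|^2$. It also avoids the paper's boundary-layer bound, which tacitly uses that $x_{k+1,i}-x_\delta$ is an outward normal to $\Omega$ at $x_{k+1,i}$ of length at most $\delta$. That holds when $\Omega$ contains a ball of radius $\delta$ touching $\partial\Omega$ at $x_{k+1,i}$, but it need not hold for a general convex body: at a corner of opening $\theta$ the length is already $\delta/\sin(\theta/2)$.

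On the other hand, the paper's splitting is the discrete analogue of the quantitative mechanism in item (4) of Theorem~\ref{thm.qualitative}, where uniform integrability keeps barycenters a uniform distance away from $\partial\Omega$. Your hyperplane argument is only qualitative, but that is all a single step requires.

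You could even drop the hyperplane step. The true transport cost is dominated by the frozen-cell one, with equality at $x_{k+1,i}$, so $x_{k+1,i}$ must minimise $w\mapsto a_{k+1,i}|w-b_i|^2+\frac{1}{2\tau}|w-x_{k,i}|^2$ over $\Omega$. Hence it equals $\frac{2\tau a_{k+1,i}b_i+x_{k,i}}{1+2\tau a_{k+1,i}}$, a convex combination with positive weight on the interior point $x_{k,i}$, and this uses only $b_i\in\Omega$.
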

  Once again we omit the dependence on $\tau$ in this proof since it is fixed. 
\begin{proof}
  First notice that from the à priori estimates it holds that $\mathscr{F}(\varrho_{k+1}) < +\infty$ and hence, since $F$ is superlinear, $\varrho_{k+1}$ is uniformly integrable, so that for any $\varepsilon > 0$ there is $\delta' > 0$ such that if 
  \[
    E \text{ is a Borelian with } |E| < \delta', \text{ then } \varrho_{k+1}(E) < \varepsilon. 
  \]

  Assume by contradiction that $x_{k+1,i} \in \partial \Omega$. In addition, since $x_{k,i} \in \intt \Omega$, it holds that $\dist(x_{k,i}, \partial \Omega) > 0$. Defining the family of closed and convex sets
  \[
    \Omega_\delta \eqdef 
    \left\{
      x \in \Omega : 
      \dist(x,\partial \Omega) \ge \delta      
    \right\},
  \]
  it holds that for $\delta$ small enough $x_{k,i} \in \Omega_\delta$, and as the Minkowiski content of $\partial \Omega$ coincides with its perimeter, we have that $|\Omega\setminus \Omega_\delta| \cvstrong{\delta \to 0}{} 0$. So that we can choose $\delta$ small enough so that $\varrho_{k+1}(\Omega\setminus \Omega_\delta) < \varepsilon$. 

  For such a choice of $\delta$, we set $x_\delta \eqdef \proj_{\Omega_\delta}(x_{k+1,i})$, the unique orthogonal projection onto the convex set $\Omega_\delta$, and define a new competitor $\mathbf{x}_\delta$ by replacing the entry $x_{k+1,i}$ by $x_\delta$, see Figure~\ref{figure.discrete_time_interior_argument}. As a result, since $x_{k,i} \in \Omega_\delta$, it follows from convexity that
  \[
    \norm{x_\delta - x_{k,i}}^2 \le \norm{x_{k+1,i} - x_{k,i}}^2.
  \]
  Hence, comparing the energies of this competitor and the minimizer, it follows that 
  \begin{align*}
    & 0\!\le\!
    W_2^2(\varrho_{k+1}, \mu_{\mathbf{x}_{\delta}, \mathbf{a}_{k+1}}) 
    -
    W_2^2(\varrho_{k+1}, \mu_{\mathbf{x}_{k+1}, \mathbf{a}_{k+1}}) \\ 
    &\!\le\! 
      \int_{\Omega_{k+1,i}\cap \Omega_\delta} \!\!\!\!
      \left(
      |x - x_\delta|^2 - |x - x_{k+1,i}|^2
    \right)
    \dd \varrho_{k+1}\\
    &+
      \int_{\Omega_{k+1,i}\setminus \Omega_\delta} \!\!\!\!
      \left(
      |x - x_\delta|^2 - |x - x_{k+1,i}|^2
    \right)
    \dd \varrho_{k+1}. 
  \end{align*}

  To estimate the first term, notice that for any $x \in \Omega_{k+1,i}\cap \Omega_\delta$ we have that 
  \begin{align*}
    |x - x_\delta|^2 - |x - x_{k+1,i}|^2 
    =& 
    |x_\delta|^2 - |x_{k+1,i}|^2 - 2\inner{x, x_\delta - x_{k+1,i}} \pm 2\norm{x_\delta - x_{k+1,i}}^2\\ 
    =& 
    |x_\delta|^2 - |x_{k+1,i}|^2 - 2\inner{x_\delta, x_\delta - x_{k+1,i}} \\
    &-2|x_\delta - x_{k+1,i}|^2 - 2\inner{x-x_\delta, x_\delta - x_{k+1,i}} 
    \\ 
    =& -|x_\delta - x_{k+1,i}|^2 - 2\inner{x-x_\delta, x_\delta - x_{k+1,i}} 
    \le -\delta^2,
  \end{align*}
  where in the last inequality we have used that $x_{k+1,i} \in \partial \Omega$ and classical properties of the projection onto closed convex sets.

  On the other hand, to estimate the second term, for any $x \in \Omega_{k+1,i}\setminus \Omega_\delta$ it holds that $|x - x_\delta|^2 - |x - x_{k+1,i}|^2 \le \delta^2$. Indeed, for any such $x$, we have that 
  \begin{align*}
    |x - x_\delta|^2 
    =& 
    |x - x_{k+1,i}|^2
    +
    |x_{k+1,i}-x_\delta|^2 
    +
    2\inner{x - x_{k+1,i}, x_{k+1,i}-x_\delta}\\ 
    \le& 
    |x - x_{k+1,i}|^2
    +
    |x_{k+1,i}-x_\delta|^2 \\ 
    \le& 
    |x - x_{k+1,i}|^2 + \delta^2,
  \end{align*}
  once again the last inequality coming from the properties of projection onto convex sets. 

  Coming back to the estimation of the energies, we obtain that
  \begin{align*}
    0 
    &\le
    W_2^2(\varrho_{k+1}, \mu_{\mathbf{x}_{\delta}, \mathbf{a}_{k+1}}) 
    -
    W_2^2(\varrho_{k+1}, \mu_{\mathbf{x}_{k+1}, \mathbf{a}_{k+1}}) \\ 
    &\le  
    \delta^2 \left(
        -\varrho_{k+1}\left(
          \Omega_{k+1,i}\cap \Omega_\delta
        \right)
        + 
        \varrho_{k+1}\left(
          \Omega_{k+1,i}\setminus \Omega_\delta
        \right)
    \right) \\ 
    &\le 
    \delta^2 \left(
        - a_{k+1,i} + 2\varrho_{k+1}\left(
          \Omega_{k+1,i}\setminus \Omega_\delta
        \right),
    \right).
  \end{align*}
  Choosing $\delta$ small enough so that $2\varrho_{k+1}\left(
    \Omega_{k+1,i}\setminus \Omega_\delta
  \right) \le 2\varepsilon < a_{k+1,i}$, the previous construction contradicts the minimality, so that it must hold that $x_{k+1,i} \in \intt \Omega$. 
\end{proof}

Concerning the behavior of the weights $a_i^\tau$, we use their optimality conditions to show that if $a_i$ is a limit curve as $\tau \to 0$, then it has the property that if at some time $a_i$ reaches $0$, then it remains null for any subsequent time instant. 
\begin{proposition}\label{prop.estimate_ai}
  Let $\Omega \subset \mathbb{R}^d$ be compact and convex, and let $\left(\rhotau{k+1}, \Xtau{k+1}, \Atau{k+1} \right)$ be a minimizer of~\eqref{eq.MMS_def}. For all $i = 1,\dots,N$ let $t \mapsto a_i(t)$ be a limit trajectory of $a_i^\tau(\cdot)$, then if $a_i(t) = 0$ we have $a_i(s) = 0$ for all $s \ge t$. 
\end{proposition}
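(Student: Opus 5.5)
The plan is to exploit the discrete optimality condition for the weights from Prop.~\ref{prop.optimality_conditions}, together with the cusp of $g$ at $0$, to show that at the discrete level a small weight is pushed downward. This gives an estimate that survives the limit $\tau\to0$. Throughout I use that the Kantorovich potentials can be normalised to be bounded by $C_\Omega$. The normalisation in the proof of Prop.~\ref{prop.optimality_conditions} adds the constant $c_{k+1}$, so I must also control $c_{k+1}$. Here I use that the $a_{k+1,j}$ stay bounded away from zero for at least one index (some $a_j\ge 1/N$), and that $g'$ is bounded on $[1/N,1]$. I expect this to yield a bound $\psi^\tau_{k+1,i}\le M$, uniform in $k$ and $\tau$, at least for the upper side, which is all I need.

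\textbf{Step 1 (discrete monotonicity near zero).} By the cusp hypothesis, $g(t)/t\to+\infty$. Combined with $g(0)=0$, this implies that the derivative $g'(t)$ cannot stay bounded as $t\to0^+$. In fact I would first argue via the minimisation, by comparing with competitors, rather than through $g'$ pointwise. Pick $\eta>0$ such that $g(t)\ge (M+1)t$ for $t\le\eta$. Suppose $0<a^\tau_{k+1,i}\le\eta$. Consider the competitor that moves the whole mass $a^\tau_{k+1,i}$ onto an index $j$ with $a_{k+1,j}\ge1/N$, keeping $\varrho_{k+1}$ and $\mathbf{x}_{k+1}$ fixed. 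The change in $\mathscr G$ is at most $-g(a_i)+C a_i$, by the Lipschitz bound of $g$ near $a_j$. The change in the transport cost is at most $\operatorname{diam}(\Omega)^2a_i$, obtained by rerouting the mass of cell $i$ to $x_j$. The change in the proximal term is $\frac1{2\tau}(|a_{k,i}|^2-|a_{k+1,i}-a_{k,i}|^2+\dots)$. Minimality then forces $a_{k+1,i}-a_{k,i}\le -c\,\tau$, or more cleanly the inequality $a_{k+1,i}\le a_{k,i}$. Alternatively, whenever $i\in I_{k+1}$, the optimality condition reads $a_{k+1,i}-a_{k,i}=-\tau(g'(a_{k+1,i})+\psi_{k+1,i})$. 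Choosing $\eta$ so that $g'>M$ on $(0,\eta]$ (possible along the cusp, after a mean-value argument, where I would need care), the right-hand side is negative.

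\textbf{Step 2 (invariance).} From Step 1, for every $\tau$ the discrete sequence $k\mapsto a^\tau_{k,i}$ is nonincreasing as long as it lies in $[0,\eta]$. Since $0$ is absorbing (if $a_{k,i}=0$, Step 1 gives $a_{k+1,i}\le\eta\Rightarrow a_{k+1,i}\le 0$), the set $[0,\eta]$ is forward invariant for the scheme. Each step can at most decrease the weight once the weight is below $\eta$. The one subtlety is that a step could jump from above $\eta$ into $[0,\eta]$, but that does not matter for the argument.

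\textbf{Step 3 (passing to the limit).} Let $a_i(t)=0$ and $s>t$. By the uniform convergence of Prop.~\ref{prop.interpo_apriori_limits}, for $\tau$ small we have $a^\tau_i(t)\le\varepsilon<\eta$. By Step 2, $a^\tau_i(r)\le\varepsilon$ for all $r\ge t$. Letting $\tau\to0$ and then $\varepsilon\to0$ gives $a_i(s)=0$.

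The main obstacle is Step 1, namely making the uniform bound $M$ on the potentials rigorous, since the normalising constant $c_{k+1}$ contains the term $\sum_{j\in I_k\setminus I_{k+1}}a_{k,j}/\tau$. That is why I prefer the direct competitor argument, which avoids $c_{k+1}$ altogether.
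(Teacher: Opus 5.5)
Step~1 does not go through, and the obstruction is the one you flagged for $c_{k+1}$. The direct competitor does not avoid it. Carry out your competitor computation in full, with $\beta=a^\tau_{k+1,i}$, $L=\sup_{[1/N,1]}|g'|$ and $D=\diam\Omega$. The part of the proximal term hidden in ``$\dots$'' is the $j$-th component, and the total proximal change is $\frac{\beta}{\tau}\bigl(a_{k,i}+a_{k+1,j}-a_{k,j}\bigr)$. Minimality then yields only
\[
\frac{g(\beta)}{\beta}-L-D^2\le\frac{a_{k,i}+(a_{k+1,j}-a_{k,j})}{\tau},
\]
and the right-hand side does not contain $a_{k+1,i}$ at all. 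So $a_{k+1,i}\le a_{k,i}$ does not follow. You only get $a_{k,i}\ge\tau-(a_{k+1,j}-a_{k,j})$, which says nothing once the large atoms gain more than $\tau$ in that step.

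That is exactly what happens when other small atoms die during the step: their mass is redistributed, which is the term $\sum_{j\in I_k\setminus I_{k+1}}a_{k,j}/\tau$ in $c_{k+1}$. For the same reason $c_{k+1}$ is not uniformly bounded (in the example below it is of order $\tau^{-1/3}$), so no uniform $M$ exists. The claim that $0$ is absorbing also fails to follow: with $a_{k,i}=0$ the inequality only says that every large atom gains at least $\tau$.

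In fact the single-index monotonicity of Step~2 is false, so the route cannot be repaired index by index. Take $g(t)=\sqrt t$, $N=3$, $s=\tau^{2/3}$, $\lambda=3/4$ and $\mathbf a_k=(\lambda s,\lambda s,1-2\lambda s)$, and write the two small new weights as $su_1,su_2$. The transport term and $g$ near $1$ are Lipschitz in $\mathbf a$. Hence the functional minimized in $\mathbf a$ equals a constant plus $\tau^{1/3}\bigl(J(u_1,u_2)+O(\tau^{1/3})\bigr)$, where
\[
J(u_1,u_2)=\sqrt{u_1}+\sqrt{u_2}+\tfrac12\bigl[(u_1-\lambda)^2+(u_2-\lambda)^2+(2\lambda-u_1-u_2)^2\bigr].
\]
Its global minimum is attained at $(u_1,u_2)\approx(0.85,0)$ and its mirror image, with value $\approx1.42$. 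For comparison, the best value on the diagonal is $\approx1.60$ and $J(0,0)=27/16$. So one small atom dies and the other grows from $0.75\,\tau^{2/3}$ to about $0.85\,\tau^{2/3}$, far below any fixed $\eta$.

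The missing idea is the paper's: track the total mass of the whole group $\Lambda_1$ of indices that vanish at the first extinction time $t_1$.
\begin{itemize}
\item Atoms dying in a step necessarily belong to $\Lambda_1$, since the others stay above $\delta/2$ and jumps are $O(\sqrt\tau)$.
\item So the released mass, which enters $\tau c_{k+1}$ as $\frac{1}{|I_{k+1}|}\sum_{j\in I_k\setminus I_{k+1}}a_{k,j}$, is already counted in $\sum_{i\in\Lambda_1}a^\tau_{k,i}$.
\item The blow-up of $g'$ on $\Lambda_1\cap I_{k+1}$ makes the remaining $O(\tau)$ contribution negative.
\end{itemize}
Hence the group sum is nonincreasing for small $\tau$, and one concludes by induction over the successive extinction times $t_n$ and groups $\Lambda_n$.

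Separately, $g(t)/t\to\infty$ only gives $\limsup_{t\to0^+}g'(t)=+\infty$. So requiring $g'>M$ on $(0,\eta]$ is an extra assumption, which the paper also uses implicitly.
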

\begin{proof}
  Let us first recall the inactive set $I_{k+1} = \{ h : 0 < a_{k+1,h} < 1 \}$ defined in Proposition~\ref{prop.optimality_conditions}. Given a limit curve ${a_{i}(\cdot)}_{i = 1, \dots N}$, define 
  \[
      t_1 \eqdef \min\left\{
        t \ge 0 : a_i(t) = 0 \text{ for } i \in \{1,\dots, N\}
      \right\}
      \text{ and }
      \Lambda_1 
      \eqdef 
      \left\{
        i : a_i(t_1) = 0
      \right\},
  \]
  and by induction we define for $n > 1$
  \[
      t_n \eqdef \min\left\{
        t \ge 0 : a_i(t) = 0 \text{ for } i \in \{1,\dots, N\}\setminus \Lambda_{n-1}
      \right\}
      \text{ and }
      \Lambda_n
      \eqdef 
      \left\{
        i : a_i(t_n) = 0
      \right\}.
  \]
  In particular, we expect that $\Lambda_n \subset \Lambda_{n+1}$. 

  Let us start by proving that $a_i(t) = 0$ for all times $t \in (t_1,t_2)$ and all indexes $i \in \Lambda_1$. Fix some time $t \in (t_1, t_2)$, hence by definition there is some $\delta$ such that for all $j \not\in \Lambda_1$ it holds that $a_j(\cdot) > \delta$ over $(t_1,t)$. In particular, let $k \in \mathbb{N}$ be such that $k\tau < t_1 \le (k+1)\tau$, so that for $\tau$ small enough if holds, from the uniform convergence of $a_i^\tau$ to $a_i$, that 
  \[
    a_{k,i}^\tau \le C \sqrt{\tau}, \text{ for all } i \in \Lambda_1 \text{ and }
    a_{n,j}^\tau \ge \delta/2, \text{ for all } j \not\in \Lambda_1 \text{, } t_1 \le n\tau \le t.
  \]
  
  Recall the definition of the set $I_{k+1} \eqdef \{i : 0 < \atau{k+1,i} < 1\}$, and notice that either $I_{k+1}$ is empty or it has at least two elements since if it where to have only one element, there would either be a single positive $a_{k+1,i}^\tau$ but smaller than $1$, or there would be an index with mass $1$ and another in $I_{k+1}$, either way contradicting the fact that the total mass sums to $1$. We will now prove that 
  \[
    \sum_{i \in \Lambda_1\cap I_{k+1}} \atau{k+1,i}
    \le 
    \sum_{i \in \Lambda_1\cap I_{k}} \atau{k,i} 
    \le 
    CN \sqrt{\tau}, 
  \]
  with an argument that can be carried by induction for all $n \ge k$ such that $n\tau < t$. The case that $I_{k+1}$ is empty is trivially true, hence we suppose that $|I_{k+1}| \ge 2$. 
  
  From the optimality conditions of $a_{k+1,i}^\tau$, we have that 
  \begin{align*}
    \sum_{i \in \Lambda_1 \cap I_{k+1}} a_{k+1,i}^\tau - a_{k,i}^\tau 
    = 
    \tau\left(
      \sum_{i \in \Lambda_1 \cap I_{k+1}} 
      -g'(a_{k+1,i}^\tau) - \psi_{k+1,i}^\tau
    \right).
  \end{align*}
  And now we can use the characterization of the Kantorovitch potential $\psi_{k+1}$ in terms of the potential $\bar \psi_{k+1}$, which is bounded by a constant depending on $\Omega$. Indeed, from the proof of Proposition~\ref{prop.optimality_conditions} we have that
  \begin{align*}
    \sum_{i \in \Lambda_1 \cap I_{k+1}} a_{k+1,i}^\tau &- a_{k,i}^\tau
    =
    \tau\left(
      \sum_{i \in \Lambda_1 \cap I_{k+1}} 
      -g'(a_{k+1,i}^\tau) - \bar \psi_{k+1,i}^\tau \right. \\ 
      &+
      \left. 
      \frac{1}{|I_{k+1}|}
      \left(
        \sum_{j \in I_{k+1}} \bar \psi_{k+1,j} + g'(a_{k+1,j}^\tau) + 
        \sum_{j \in I_k\setminus I_{k+1}} \frac{a_{k,j}^\tau}{\tau}
      \right)
    \right)\\ 
    \le& 
    \tau\left(
      C_\Omega 
      -
      \left(1 - \frac{1}{|I_{k+1}|}\right)
      \sum_{i \in \Lambda_1 \cap I_{k+1}} 
      g'(a_{k+1,i}^\tau)
      +
      \frac{1}{|I_{k+1}|}
        \sum_{j \in I_{k+1}\setminus \Lambda_1}  g'(a_{k+1,j}^\tau)
      \right) \\ 
      &+
      \frac{1}{|I_{k+1}|}
      \sum_{j \in I_k\setminus I_{k+1}} a_{k,j}^\tau.
  \end{align*}
  But notice that, for all indexes $j \in \Lambda_1 \setminus I_{k+1}$, the mass $a_{k+1,j}^\tau$ is bounded from below away from zero, hence $g'(a^\tau_{k+1,j})$ is bounded from above by a finite positive constant. On the other hand, as $\tau \to 0$ we have that $g'(a^\tau_{k+1,i}) \to \infty$ for $i \in \Lambda_1\cap I_{k+1}$. As a result the term being multiplied by $\tau$ in the above inequality is negative for $\tau$ small enough. It then follows that
  \begin{align*}
    \sum_{i \in \Lambda_1 \cap I_{k+1}} a_{k+1,i}^\tau 
    &\le 
    \sum_{i \in \Lambda_1 \cap I_{k+1}} a_{k,i}^\tau
    + 
    \sum_{j \in I_k\setminus I_{k+1}} a_{k,j}^\tau\\
    &= 
    \sum_{i \in \Lambda_1 \cap I_{k+1}} a_{k,i}^\tau
    + 
    \sum_{j \in I_k\setminus I_{k+1}} a_{k,j}^\tau 
    = 
    \sum_{i \in \Lambda_1 \cap I_{k}} a_{k,i}^\tau,
  \end{align*}
  where the last equality comes from the fact that $I_k\setminus I_{k+1} \subset \Lambda_1$. Indeed, in our current case $|I_{k+1}|\ge 2$, if $i \in I_k \setminus I_{k+1}$ it must hold that $a_{k+1,i}^\tau = 0$ and $0 < a_{k,i}^\tau \le C\sqrt{\tau}$, which cannot be the case if $i \in \Lambda_1^c$, since in this complement $a_i^\tau(t)$ is uniformly bounded away from $0$ over $(t_1,t)$. 

  We can iterate this argument, obtaining that 
  \[
    \sum_{i \in \Lambda_1 \cap I_{n+1}} a_{n+1,i}^\tau  < \sum_{i \in \Lambda_1 \cap I_{n}} a_{n,i}^\tau, 
    \text{ while } t_1 < (n+1)\tau < t_2.  
  \]
  This implies that for all $t_1 < t < t_2$ we have
  \[
    \sum_{i \in \Lambda_1 \cap I_{n+1}} a_{i}^\tau(t)  < \sum_{i \in \Lambda_1 \cap I_{n}} a_{i}^\tau(t_1) 
    \le |\Lambda_1|C \sqrt{\tau}. 
  \]
  From uniform convergence we conclude that $a_i(\cdot) = 0$ for any $i \in \Lambda_1$ over $(t_1, t)$.

  Since the curves $t \mapsto a_i(t)$ are continuous, it must hold that $\Lambda_1 \subset \Lambda_2$, and we can repeat the same argument for bigger set of indexes $\Lambda_2$ on the time interval $(t_2, t_3)$. It then follows by the same argument and an induction principle that $a_i(t) = 0$ for any $i \in \Lambda_n$ and $t \ge t_n$. The result follows. 

\end{proof}

\subsection{Strong convergence of the pressure}\label{sec.conv_pressure}
Recalling that the pressure variable is defined as $P(\varrho) = \varrho F'(\varrho) - F(\varrho)$, and $P(\cdot)$ is a continuous function, we notice that its gradient is given by
\[
  \nabla P(\varrho) = \varrho \nabla F'(\varrho).
\]
As a result, the optimality conditions from Prop.~\ref{prop.optimality_conditions} yield a uniform estimate for ${(P(\varrho^\tau))}_{\tau > 0}$. 

\begin{lemma}
    There exists a constant $C$, depending only on $\Omega$ and $T$ such that
    \begin{align*}
        \norm{P(\varrho^\tau)}_{L^1\left([0,T]; W^{1,1}(\Omega)\right)} &\le C, \\ 
        \norm{\nabla P(\varrho^\tau)}_{L^1\left([0,T]\times\Omega\right)} &\le C, 
    \end{align*}
    for all $\tau > 0$. 
\end{lemma}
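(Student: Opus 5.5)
The plan is the classical one from the JKO theory (as in~\cite[Chap.~8]{santambrogio2015optimal}): use the optimality condition for $\rhotau{k+1}$ from Prop.~\ref{prop.optimality_conditions}, multiply it by $\rhotau{k+1}$, and integrate. On $\{\rhotau{k+1}>0\}$ the identity $\nabla P(\varrho) = \varrho\nabla F'(\varrho)$ gives, $\rhotau{k+1}$-a.e.,
\[
  \nabla P(\rhotau{k+1})
  = -\rhotau{k+1}\,\vtau{k+1}
  - \rhotau{k+1}\sum_{i=1}^N (x - \xtau{k+1,i})\mathbbm{1}_{\Omega^\tau_{k+1,i}} .
\]
To justify this as an identity of functions on all of $\Omega$, I would follow the standard argument. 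First, $F'(\rhotau{k+1})$ is Lipschitz on the positivity set, since it equals $-\psi/\tau - \varphi$ plus a constant, with both potentials Lipschitz on the bounded set $\Omega$. Then $P(\rhotau{k+1}) = \Pi(F'(\rhotau{k+1}))$ for a suitable monotone function $\Pi$, which shows $P(\rhotau{k+1}) \in W^{1,1}$ with the gradient above and $\nabla P = 0$ a.e.\ on $\{\varrho = 0\}$. I would take this step from the classical theory rather than reprove it.

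With the identity in hand, I take $L^1$ norms:
\[
  \norm{\nabla P(\rhotau{k+1})}_{L^1}
  \le \int_\Omega |\vtau{k+1}|\dd\rhotau{k+1} + \diam(\Omega)
  \le \frac{W_2(\rhotau{k},\rhotau{k+1})}{\tau} + \diam(\Omega).
\]
The first bound uses $\int|x-\xtau{k+1,i}|\dd\varrho \le \diam(\Omega)\,\varrho(\Omega)$, since all atoms lie in $\overline\Omega$ and $\rhotau{k+1}$ is a probability. The second uses Cauchy--Schwarz together with $\int|\id - T^\tau_{k+1}|^2 \dd\rhotau{k+1} = W_2^2(\rhotau{k},\rhotau{k+1})$.

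Next I integrate in time over the staircase interpolation and apply Cauchy--Schwarz in $k$:
\[
  \int_0^T\!\!\norm{\nabla P(\varrho^\tau_t)}_{L^1}\dd t
  \le \sum_{k} W_2(\rhotau{k},\rhotau{k+1}) + T\diam(\Omega)
  \le \sqrt{N_\tau}\Bigl(\sum_k W_2^2\Bigr)^{1/2} + CT.
\]
Since $N_\tau\approx T/\tau$, estimate~\eqref{eq.a_priori_distance} of Lemma~\ref{lemma.a_priori_estimates} bounds the first term by $\sqrt{T/\tau}\sqrt{C\tau}=\sqrt{CT}$. This gives the second inequality of the statement.

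For the full $W^{1,1}$ norm, I also need $\norm{P(\varrho^\tau_t)}_{L^1}$. Convexity of $F$ gives $P\ge -F(0)$, so $\int |P| \le \int P + 2|F(0)||\Omega|$. Two routes are available to bound $\int P$. The first is Poincar\'e--Wirtinger in $W^{1,1}$ on the convex domain $\Omega$, which bounds $\norm{P - \fint P}_{L^1}$ by $C\norm{\nabla P}_{L^1}$. To control the mean, I would note that by superlinearity the set where $\varrho^\tau_t$ is large has small measure, uniformly thanks to the energy bound~\eqref{eq.a_priori_energy}. Since $P$ is continuous, $P(\varrho)$ is bounded on the complement, which is a set of positive measure. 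Then I would use the $L^1$--$W^{1,1}$ Sobolev control to pass from a bound on a set of positive measure to the full $L^1$ norm. The second, simpler route is the observation that $d\,P(\varrho)\le $ (McCann's condition) relation $P(\rho)\ge \cdots$. Since I do not trust that route, I would follow the Poincar\'e argument.

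The main obstacle is this mean-value control, together with the rigorous justification of the $W^{1,1}$ identity on $\{\varrho=0\}$. The time-integrated gradient bound itself is routine.
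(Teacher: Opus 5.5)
Your proof is correct. The gradient estimate is the paper's argument. The paper also starts from $|\nabla P(\varrho^\tau_t)|\le(|\mathbf{v}^\tau_t|+\diam\Omega)\varrho^\tau_t$ and only sums differently. It implicitly uses $|v|\le\frac12(1+|v|^2)$, giving $\tau N_\tau+\frac1\tau\sum_k W_2^2(\rhotau{k},\rhotau{k+1})$, whereas you use Cauchy--Schwarz in $x$ and then in $k$; both close with \eqref{eq.a_priori_distance}.

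The genuine difference is the $L^1$ part. The paper simply asserts that the bound on $\int_0^T\norm{P(\varrho^\tau_t)}_{L^1}\dd t$ follows from the energy bound \eqref{eq.a_priori_energy}. That is immediate when $P$ is dominated by $F$: for the entropy $P(\rho)=\rho$, and for the porous medium $P=(m-1)F$. It is not immediate under the standing hypotheses alone. For instance, $F(\rho)=e^\rho-1-\rho$ is convex, superlinear and satisfies McCann's condition, since $P(\rho)=(\rho-1)e^\rho+1\ge 0$ and $\rho P'(\rho)=\rho^2e^\rho\ge P(\rho)$. Yet $P/F\to\infty$, so a finite $\int F(\varrho)$ does not by itself control $\int P(\varrho)$.

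Your route combines three ingredients: $P\ge -F(0)$ by convexity, Poincar\'e--Wirtinger in $W^{1,1}$ on the convex set $\Omega$, and an upper bound for $P(\varrho)$ on a set of measure at least $|\Omega|/2$. It uses only convexity of $F$, continuity of $P$ and the gradient estimate, so it covers every admissible $F$. Its cost is that it needs the gradient bound first, and also the per-step $W^{1,1}$ (indeed Lipschitz) regularity of $P(\rhotau{k+1})=F^*(F'(\rhotau{k+1}))$, which you rightly take from the classical theory.

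Two small simplifications. The low-density set needs neither superlinearity nor the energy, since unit mass already gives $|\{\varrho>M\}|\le 1/M$ by Markov's inequality. You were also right to discard the McCann-based route: that condition only yields lower bounds ($P\ge0$ and $\rho P'\ge(1-\frac1d)P$), never an upper bound of $P$ by $F$.
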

\begin{proof}
    The bound on $\norm{P(\varrho^\tau)}_{L^1\left([0,T]; L^{1}(\Omega)\right)}$ is easily obtainable with the a priori energy estimates~\eqref{eq.a_priori_energy}. From the optimality conditions on $\rhotau{k+1}$, we obtain for the stair case interpolation that
    \[
        |\nabla P(\rhotau{t})| 
        \le 
        (|\mathbf{v}^\tau_t| + \diam \Omega)\rhotau{t}, \text{ a.e.~in } [0,T]\times \Omega,
    \]
    so that
    \begin{align*}
        \int_0^T \norm{\nabla P(\rhotau{t})}_{L^1(\Omega)} \dd t 
        &\le  
        \sum_{k = 0}^{N_\tau - 1}
        \int_{k\tau}^{(k+1)\tau} 
        \left(
            \int_\Omega |\nabla P(\rhotau{k+1})|\dd x 
        \right)\dd t\\ 
        &\le  
        \sum_{k = 0}^{N_\tau - 1}
        \tau
        \left(
            \int_\Omega (|\mathbf{v}^\tau_{(k+1)}| + \diam \Omega)\rhotau{k+1}\dd x 
        \right)\dd t\\ 
        &\le 
        \tau N_\tau 
        + 
        \frac{1}{\tau}
        \sum_{k = 0}^{N_\tau - 1}
        \int_\Omega |\id - T_{k+1}^\tau|^2\dd \rhotau{k+1}\\ 
        &\le 
        \tau N_\tau 
        + 
        \frac{1}{\tau}
        \sum_{k = 0}^{N_\tau - 1}
        W_2^2(\rhotau{k}, \rhotau{k+1})\\ 
        &\le 
        2T + \mathscr{E}(\varrho_0, \mu_0), 
    \end{align*}
    and the result follows.
\end{proof}

In the sequel, we are ready to strengthen the convergence of $\rho^\tau$ to strong convergence in $L^1$ and as a result, prove that $P(\varrho^\tau)$ and $\nabla P(\varrho^\tau)$ also pass to the limit. This will be done, as in~\cite{carlier2017splitting} and~\cite{di2014curves}, resorting to the following result by Savaré and Rossi~\cite[Thm.~2]{rossi2003tightness}. 
\begin{theorem}\label{thm.savare_rossi}
    On a Banach space $\mathcal{X}$, let be given
    \begin{itemize}
        \item a normal coercive integrand, $\Phi : \mathcal{X} \to [0, +\infty]$, i.e.~lower semi-continuous with relatively compact sublevels in $\mathcal{X}$;
        \item a pseudo-distance $d : \mathcal{X} \times \mathcal{X} \to [0, +\infty]$, i.e., $d$ is lower semi-continuous, and if $d(\rho, \rho') = 0$ with $\Phi(\rho), \Phi(\rho') < \infty$, then $\rho = \rho'$.
    \end{itemize}
    Let $\mathcal{U}$ be a set of measurable functions $u : (0, T) \to  \mathcal{X}$. Assuming that
    \[
        \sup_{u \in \mathcal{U}} 
        \int_0^T \Phi(u(t)) \dd t < \infty 
        \text{ and }
        \lim_{h \to 0}
        \sup_{u \in \mathcal{U}} 
        \int_{0}^{T-h}
        d(u(t + h), u(t))\dd t = 0,
    \]
    then $\mathcal{U}$ contains a subsequence sequence that converges in measure (with respect to $t \in (0,T)$) to a limit $u_\star$.
\end{theorem}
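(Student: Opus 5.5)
The plan is to work with Young measures on the Banach space $\mathcal{X}$. I would take an arbitrary sequence ${(u_n)}_{n\in\mathbb{N}}\subset\mathcal{U}$, extract a limiting parametrized measure ${(\nu_t)}_{t\in(0,T)}$, and then use the time-translation condition on $d$ to show that $\nu_t$ is a Dirac mass $\delta_{u_\star(t)}$ for a.e.~$t$. The fundamental theorem of Young measures then turns this into convergence in measure of $u_n$ to $u_\star$.

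\textbf{Step 1: tightness.} Set $C_0 \eqdef \sup_{u\in\mathcal{U}}\int_0^T\Phi(u(t))\dd t$. Chebyshev's inequality gives $\mathscr{L}^1(\{t : \Phi(u_n(t))>M\})\le C_0/M$ for every $n$. Since the sublevels $\{\Phi\le M\}$ are relatively compact, the family ${(u_n)}$ is tight in the sense of Young measures. I would invoke the fundamental theorem of Young measures on the (separable, compact) closures of the sublevels of $\Phi$. This produces a subsequence and a measurable family ${(\nu_t)}$ of probability measures on $\mathcal{X}$ such that $\int_0^T\! h(t,u_n(t))\dd t\to\int_0^T\!\int h(t,x)\dd\nu_t(x)\dd t$ for bounded Carathéodory integrands $h$. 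Lower semicontinuity of $\Phi$ also gives $\int_0^T\!\int\Phi\dd\nu_t\dd t\le C_0$. In particular, for a.e.~$t$ the measure $\nu_t$ is concentrated on $\{\Phi<\infty\}$.

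\textbf{Step 2: the diagonal estimate.} For fixed $h>0$, I would regard the pairs $(t,s)\mapsto(u_n(t),u_n(s))$ on the strip $S_h \eqdef \{(t,s)\in(0,T)^2: |t-s|<h\}$. These are generated by functions of separate variables, so their Young limit is the product $\nu_t\otimes\nu_s$. Since $d\ge0$ is lower semicontinuous, the lower-semicontinuity part of the fundamental theorem yields
\[
  \iint_{S_h}\int d\,\dd(\nu_t\otimes\nu_s)\dd t\dd s
  \le \liminf_{n}\iint_{S_h} d(u_n(t),u_n(s))\dd t\dd s
  \le 2h\sup_{|k|<h}\sup_{u\in\mathcal{U}}\int_0^{T-|k|} d(u(t+|k|),u(t))\dd t .
\]
For the last inequality I would change variables $s=t+k$ and use the symmetry of $d$; if $d$ is not assumed symmetric, one symmetrizes it first. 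Dividing by $2h$, the right-hand side tends to $0$ as $h\to0$ by hypothesis. I expect this step to be the main obstacle. To conclude from here that $\int_0^T\int d\,\dd(\nu_t\otimes\nu_t)\dd t=0$, one needs a Lebesgue-differentiation argument for the map $t\mapsto\nu_t$. My first attempt would be to truncate, replacing $d$ by $\min(d,1)$ and approximating it from below by bounded continuous functions of the form $\sum_j\phi_j(x)\chi_j(y)$. For such a tensor integrand the averaged quantity is $\frac{1}{2h}\iint_{S_h}\langle\nu_t,\phi_j\rangle\langle\nu_s,\chi_j\rangle\dd t\dd s$, and since $t\mapsto\langle\nu_t,\phi_j\rangle$ is an $L^\infty$ function, classical Lebesgue differentiation applies. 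Monotone convergence in the approximation then gives the claim.

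\textbf{Step 3: conclusion.} By Step 2, for a.e.~$t$ the measure $\nu_t\otimes\nu_t$ is concentrated on $\{(x,y): d(x,y)=0,\ \Phi(x),\Phi(y)<\infty\}$. By the pseudo-distance assumption this set is contained in the diagonal, so $\nu_t\otimes\nu_t$ is concentrated on the diagonal. A probability measure whose product with itself is concentrated on the diagonal must be a Dirac mass, so $\nu_t=\delta_{u_\star(t)}$ for some $u_\star(t)$, and $u_\star$ is measurable as the barycenter of a measurable family of Dirac masses. Finally, a Young limit consisting of Dirac masses is equivalent to convergence in measure. To see this, I would test with the bounded integrand $h(t,x)=\min(\|x-u_\star(t)\|,1)$, which gives $\int_0^T\min(\|u_n-u_\star\|,1)\dd t\to0$. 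Since the original sequence was arbitrary, this yields the statement.
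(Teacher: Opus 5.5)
The paper does not prove this theorem. It quotes it from Rossi--Savar\'e \cite[Thm.~2]{rossi2003tightness} and uses it as a black box in Proposition~\ref{prop.convPrho}, so there is no in-paper argument to compare against. Your proposal is a correct reconstruction, and it works in the same Young-measure framework as the original reference. Tightness from $\Phi$ gives a limit Young measure $(\nu_t)$. The pairs $(u_n(t),u_n(s))$ generate $\nu_t\otimes\nu_s$ on $(0,T)^2$. The integral equicontinuity then forces $\nu_t\otimes\nu_t$ onto $\{d=0\}\cap\{\Phi<\infty\}^2$, which is contained in the diagonal, so $\nu_t$ is a Dirac mass and convergence in measure follows.

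Two points need fixing when you write it out.

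First, symmetrizing by $d(x,y)+d(y,x)$ would not work. The hypothesis only controls $\int_0^{T-h} d(u(t+h),u(t))\dd t$, not the reversed order. You have two easy fixes:
\begin{itemize}
\item use $\min\{d(x,y),d(y,x)\}$, which is still lower semicontinuous, still compatible with $\Phi$, and still satisfies the hypothesis;
\item or work on the one-sided strip $\{t-h<s<t\}$, where $d(u_n(t),u_n(s))$ is exactly the quantity the hypothesis controls, so no symmetry is needed at all.
\end{itemize}

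Second, the differentiation step as sketched needs more care. Finite sums $\sum_j\phi_j\otimes\chi_j$ are not closed under maxima, so you do not get a monotone sequence of tensor approximants. Also, $\mathcal{X}$ is not compact. The argument must therefore be localized to $K_M=\{\Phi\le M\}$: use Stone--Weierstrass on $K_M\times K_M$, with the tensor sums cut off outside a neighbourhood of $K_M$ so that they stay bounded. The remainder is then controlled by $\int_0^T\nu_t(\mathcal{X}\setminus K_M)\dd t\le C_0/M$.

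A shorter route avoids tensor approximation altogether:
\begin{itemize}
\item For a.e.\ $t$, the averages $\frac1h\int_t^{t+h}\nu_s\dd s$ converge narrowly to $\nu_t$. This follows from taking Lebesgue points of $s\mapsto\int\phi_k\dd\nu_s$ for a countable convergence-determining family $(\phi_k)$, together with tightness coming from a Lebesgue point of $s\mapsto\int\Phi\dd\nu_s$.
\item The functional $\mu\mapsto\int G_t\dd\mu$, with $G_t(x)=\int\min\{d(x,y),1\}\dd\nu_t(y)$, is narrowly lower semicontinuous, because $G_t$ is lower semicontinuous by Fatou.
\item Fatou in $t$ then gives $\int_0^T\int\min\{d,1\}\,\dd(\nu_t\otimes\nu_t)\dd t=0$ directly.
\end{itemize}
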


\begin{proposition}\label{prop.convPrho}
    Up to subsequences, the staircase approximation ${\left(\varrho^\tau\right)}_{\tau > 0}$ satisfies 
    \begin{align*}
        \varrho^\tau &\cvstrong{\tau \to 0}{} \varrho \text{ in $L^1([0,T]\times \Omega)$ and pointwise},\\
        P(\varrho^\tau) &\cvstrong{\tau \to 0}{} P(\varrho) \text{ in $L^1([0,T]\times \Omega)$ and pointwise},\\
        \nabla P(\varrho^\tau) &\cvweak{\tau \to 0}{\star} 
        \nabla P(\varrho) \text{ in $\mathscr{M}^d([0,T]\times \Omega)$.}
    \end{align*}
\end{proposition}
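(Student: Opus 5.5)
We plan to apply Theorem~\ref{thm.savare_rossi} to the family $\mathcal{U} = \{P(\varrho^\tau)\}_{\tau>0}$, or equivalently to $\{\varrho^\tau\}$, following~\cite{carlier2017splitting,di2014curves}. Take $\mathcal{X} = L^1(\Omega)$. As the integrand we use
\[
  \Phi(\rho) \eqdef \norm{P(\rho)}_{W^{1,1}(\Omega)} + \mathscr{F}(\rho)
\]
if $\rho \in \Pac(\Omega)$ and $P(\rho)\in W^{1,1}$, and $+\infty$ otherwise. As the pseudo-distance we take $d(\rho,\rho') = W_2(\rho,\rho')$ when both are probability densities, and $+\infty$ otherwise.

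\textbf{Hypotheses of Theorem~\ref{thm.savare_rossi}.} Lower semicontinuity of $\Phi$ in $L^1$ follows from the lower semicontinuity of the $BV$ seminorm and the convexity of $F$. For compactness of sublevels, a bound on $\norm{P(\rho)}_{W^{1,1}}$ gives $P(\rho)$ precompact in $L^1$ by Rellich. Since $P$ is nondecreasing ($P'(s)=sF''(s)\ge 0$) and, for a superlinear $F$ satisfying McCann's condition, strictly increasing with $P(s)\to\infty$, we can invert $P$. This gives pointwise a.e.\ convergence of $\rho$ along subsequences. The bound on $\mathscr{F}$ gives uniform integrability, and Vitali then yields $L^1$ compactness. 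Checking the strict monotonicity and invertibility of $P$ from the hypotheses is the delicate point; if needed, we will argue on the level of $P(\varrho^\tau)$ and recover $\varrho^\tau$ via the monotone inverse. The uniform bound $\sup_\tau\int_0^T\Phi(\varrho^\tau_t)\dd t<\infty$ is exactly the preceding lemma together with~\eqref{eq.a_priori_energy}. The pseudo-distance is lower semicontinuous because $W_2$ is narrowly l.s.c., and $W_2(\rho,\rho')=0$ forces $\rho=\rho'$.

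\textbf{Time equicontinuity.} We need to show
\[
  \int_0^{T-h} W_2(\varrho^\tau_{t+h},\varrho^\tau_t)\dd t \to 0
\]
uniformly in $\tau$. Lemma~\ref{lemma.a_priori_estimates} and the triangle inequality give the discrete Hölder estimate $W_2(\varrho^\tau_t,\varrho^\tau_s) \le C(|t-s|+\tau)^{1/2}$. Bounding this by $C(\sqrt h + \sqrt\tau)$ handles $\tau \lesssim h$ directly. For $\tau\gg h$, the staircase curve differs from its shift only on a set of times of measure at most $(T/\tau)h$, where each jump is bounded by $C\sqrt\tau$. This gives a bound $C\,T h/\sqrt\tau \le C\sqrt h$.

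\textbf{Conclusion.} Theorem~\ref{thm.savare_rossi} gives convergence in measure in $t$ with values in $L^1(\Omega)$. Together with the uniform-in-time integrability from $\mathscr{F}$, this upgrades to $L^1([0,T]\times\Omega)$ convergence and, up to a further subsequence, pointwise a.e.\ convergence. The limit coincides with $\varrho$ by the uniform $W_2$ convergence of Proposition~\ref{prop.interpo_apriori_limits}. Continuity of $P$ then gives $P(\varrho^\tau)\to P(\varrho)$ pointwise. For $L^1$ convergence of the pressure, we would use equi-integrability of $P(\varrho^\tau)$. The $L^1_tW^{1,1}_x$ bound only gives $L^1_t L^{d/(d-1)}_x$, which yields equi-integrability in $x$ but not in $t$, so this step is the main obstacle. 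We would try to obtain it from a uniform-in-time bound on $\int P(\varrho^\tau)$, by an $L^\infty_t$ bound of $P\lesssim F+\rho$ type, which holds under the McCann condition for power-like $F$. Failing that, we would use a time-truncation argument combined with the $L^{1}_tL^{d/(d-1)}_x$ bound. Finally, $\nabla P(\varrho^\tau)$ is bounded in $L^1$, hence weakly-$\star$ compact in $\mathscr{M}^d$. Testing against $\phi\in\C^\infty_c$ and using $P(\varrho^\tau)\to P(\varrho)$ in $L^1$, any limit is the distributional gradient $\nabla P(\varrho)$, so the whole subsequence converges.
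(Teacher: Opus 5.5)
Your architecture matches the paper's. You apply Theorem~\ref{thm.savare_rossi} with an integrand built from $\mathscr{F}(\rho)$ and a norm of $P(\rho)$, and a $W_2$-type pseudo-distance. You then get a.e.\ convergence of $P(\varrho^\tau)$ by continuity, use Vitali, and pass $\nabla P(\varrho^\tau)$ to the limit by weak-$\star$ compactness, identifying the limit by testing. Your check of the time-equicontinuity condition is correct, and so is your identification of the limit through Proposition~\ref{prop.interpo_apriori_limits}; both are more explicit than the paper, which defers to~\cite{carlier2017splitting,di2014curves}.

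\textbf{The gap.} The step you call the main obstacle, space-time equi-integrability of $P(\varrho^\tau)$, is left open. It is exactly the one substantive step of the paper's proof. There, a time-uniform bound $\sup_t\|P(\varrho^\tau_t)\|_{L^1(\Omega)}\le C$ is combined with the $L^1_tL^{d/(d-1)}_x$ bound through the interpolation inequality of~\cite[Lemma~5.3]{carlier2017splitting}. Concretely, H\"older in $x$ with $\theta=d/(d+1)$ gives
\[
  \int_0^T\!\!\int_\Omega |u|^{\frac{d+1}{d}}\,\dd x\,\dd t
  \le
  \Bigl(\sup_{t}\|u(t)\|_{L^1(\Omega)}\Bigr)^{1/d}\int_0^T\|u(t)\|_{L^{d/(d-1)}(\Omega)}\,\dd t .
\]
So $P(\varrho^\tau)$ is bounded in $L^{(d+1)/d}([0,T]\times\Omega)$, and de la Vall\'ee Poussin plus Vitali conclude. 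Your first route supplies the time-uniform ingredient, via $P\le C(1+\rho+F(\rho))$ for entropy/power-type $F$ and $\sup_t\mathscr{F}(\varrho^\tau_t)\le C$ from~\eqref{eq.a_priori_energy}. But a time-uniform $L^1$ bound is not by itself equi-integrability, and you never say how it combines with the Sobolev bound in $x$.

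Your fallback, a time truncation using only the $L^1_tL^{d/(d-1)}_x$ bound, cannot work. The function $u(t,x)=\varepsilon^{-1}\mathbf{1}_{[0,\varepsilon]}(t)$ is bounded in $L^1([0,T];W^{1,1}(\Omega))$ uniformly in $\varepsilon$, yet it is not equi-integrable on $[0,T]\times\Omega$. So the time-uniform $L^1$ bound is indispensable. Once you have it, truncation does work as an elementary substitute for the interpolation. The times where $\|u(t)\|_{L^{d/(d-1)}}>\lambda$ have measure at most $C/\lambda$, and there $\int_{A_t}|u|\le\sup_s\|u(s)\|_{L^1}$.

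\textbf{Two smaller points.}
\begin{enumerate}
\item With $\Phi=+\infty$ outside $\{P(\rho)\in W^{1,1}\}$, your integrand is not $L^1$-l.s.c., since a $W^{1,1}$-bounded sequence can converge to a $\BV$ function not in $W^{1,1}$. Use $\|P(\rho)\|_{\BV(\Omega)}$ as the paper does.
\item Your worry about invertibility of $P$ is legitimate. The stated hypotheses allow $F$ affine near $0$: for instance $F(s)=as+(s-s_0)_+^3$ satisfies them, with $P\equiv0$ on $[0,s_0]$. If $s_0|\Omega|>1$, sublevels of $\Phi$ are then not $L^1$-compact. The paper does not address this and implicitly relies on the entropy/power-type setting of the cited references.
\end{enumerate}
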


\begin{proof}
  Throughout this proof, many different convergence results are proven up to subsequences, so at each step we assume without saying explicitly that we have selected a subsequence for which all the previous convergences hold. 

  First, we claim that the conditions of Thm.~\ref{thm.savare_rossi} hold for the family ${(\varrho^\tau, \mathbf{x}^\tau, \mathbf{a}^\tau)}_{\tau > 0}$ with the choice 
  \[
    d(\xi, \xi') \eqdef 
    \begin{cases}
      \dwl(\xi, \xi'),& \text{ if } \xi, \xi' \in \Pac(\Omega) \times \mathbb{R}^{dN} \times \Delta_{N-1}\\ 
      +\infty, &\text{ otherwise}
    \end{cases}
  \]
  and 
  \[
    \Phi(\xi) 
    \eqdef 
    \begin{cases}
      \mathscr{F}(\varrho) + \norm{P(\varrho)}_{\BV(\Omega)} + \norm{\mathbf{x}}^2 + \norm{\mathbf{a}}^2
      ,& \text{ if } \xi \in \Pac(\Omega) \times \mathbb{R}^{dN} \times \Delta_{N-1}\\ 
      +\infty, &\text{ otherwise},
    \end{cases}
  \]
  as shown in~\cite{carlier2017splitting,di2014curves}. As a result, there is a subsequence such that $\varrho^\tau \cvstrong{\tau \to 0}{} \varrho$ in measure in $[0,T]$, therefore up to a further subsequence, convergence also holds in $L^1\left([0,T]; L^1(\Omega)\right)$. From Lebesgue's dominated convergence theorem, it also holds that
  $\varrho^\tau$ converges to $\varrho$ in $L^1([0,T]\times \Omega)$ and pointwise for a.e. $(t,x) \in [0,T]\times \Omega$. 

  Moving on to the convergence of $P(\varrho^\tau)$, first notice that since $\rho \mapsto P(\rho)$ is continuous, we have a.e.~convergence of $P(\varrho^\tau)$ to $P(\varrho)$. In addition, notice that Lemma~\ref{lemma.a_priori_estimates} gives a uniform bound 
    \[
      \norm{P(\varrho^\tau)}_{L^1([0,T];\BV(\Omega))} \le C,
    \]
    and thanks a Sobolev embedding, a uniform bound on $\norm{P(\varrho^\tau)}_{L^1\left([0,T];L^{d/(d-1)}\right)}$. A suitable interpolation inequality, proved for instance in~\cite[Lemma~5.3]{carlier2017splitting}, gives that
    \[
      \norm{P(\varrho^\tau)}_{L^{(d+1)/d}([0,T]\times \Omega)} \le C,
    \]
    so that by De la Vallée Poussin's Theorem~\cite[Thm.~2.29]{fonseca2007modern} ${\left(P(\varrho^\tau)\right)}_{\tau > 0}$ is uniformly integrable in $[0,T]\times \Omega$, since
    \[
      \sup_{\tau > 0} \int_{[0,T]\times \Omega} 
      \gamma\left(|P(\varrho^\tau(t,x))|\right)\dd t \dd x < +\infty
    \]
    with the superlinear function $\rho \mapsto \gamma(\rho) = |\rho|^{\frac{d+1}{d}}$. If follows from Vitali's convergence theorem~\cite[Thm.~2.2.4]{fonseca2007modern} that $P(\varrho^\tau)$ converges to $P(\varrho)$ in $L^1([0,T]\times \Omega)$. 

    In addition, the uniform bound in $L^1([0,T]\times \BV(\Omega))$ along with the strong convergence in $L^1$, allows us to use weak compactness in $\BV$~\cite[Prop.~3.13]{ambrosioFunctionsBoundedVariation2000} to conclude that
    \[
      \nabla P(\varrho^\tau) \cvstar{\tau \to 0} \nabla P(\varrho) \text{ in } \mathscr{M}^d([0,T]\times \Omega),
    \]
    the result follows. 
\end{proof}

\subsection{The limit PDE-ODE system}
Now we can capitalize on the previous results to conclude with the convergence of the proposed minimizing movement scheme and consequently to the existence of a weak solution to the coupled system~\eqref{eq.PDE_ODEsysthem}.

\begin{theorem}\label{thm.convergenceJKO}
    Assume that $\mathscr{E}(\varrho_0, \mu_0) < +\infty$ and either
    \begin{itemize}
      \item the initial conditions $\mathbf{x}_0 \in {(\intt \Omega)}^{\otimes N}$;
      \item $\Omega$ has a $\mathscr{C}^1$ boundary.
    \end{itemize}
    Under these assumptions, the minimizing movement scheme defined in~\eqref{eq.MMS_def} admits subsequences converging to weak solutions of~\eqref{eq.PDE_ODEsysthem}.  
\end{theorem}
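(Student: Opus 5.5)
The plan is to pass to the limit in the discrete optimality conditions of Prop.~\ref{prop.optimality_conditions}, using the compactness from Prop.~\ref{prop.interpo_apriori_limits} and the strong convergence from Prop.~\ref{prop.convPrho}. We fix a subsequence along which all the convergences of those two results hold. Then we identify, in turn, the limit momentum $E$, the dynamics of $\mathbf{x}$, and the dynamics of $\mathbf{a}$.

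\textbf{The density equation.} Multiplying the first optimality condition by $\rhotau{k+1}$ and using $\varrho\nabla F'(\varrho)=\nabla P(\varrho)$ gives, for the staircase interpolation,
\[
  E^\tau = -\nabla P(\varrho^\tau) - \varrho^\tau\sum_{i=1}^N (x-x^\tau_i)\mathbbm{1}_{\Omega^\tau_i}
  \quad\text{a.e.}
\]
The first term converges weakly-$\star$ to $-\nabla P(\varrho)$ by Prop.~\ref{prop.convPrho}. For the advection term, we first note that the potentials $\psi^\tau$ are bounded. On the time intervals where $\Lambda$ (the set of vanished atoms) is fixed, the relation $\psi_i=-\dot a_i-g'(a_i)$ gives a bound in $L^1_t$, which suffices. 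Otherwise we use the normalized $\bar\psi$, bounded by $C_\Omega$, and note that the Laguerre cells are invariant under adding a constant. So we may extract $\bar\psi^\tau(t)\to\bar\psi(t)$ for a.e.\ $t$; if needed we go through Young measures, or use uniqueness of the optimal potential for the limit pair $(\varrho_t,\mu_t)$, which forces convergence of the whole sequence pointwise in $t$. By Lemma~\ref{lemma.cont_laguerre}, $\mathbbm{1}_{\Omega_i^\tau}\to\mathbbm{1}_{\Omega_i}$ in $L^1$ for a.e.\ $t$. We also have $\mathbf{x}^\tau\to\mathbf{x}$ uniformly and $\varrho^\tau\to\varrho$ strongly in $L^1$, and the integrand is bounded by $\diam\Omega$. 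By dominated convergence the product converges in $L^1([0,T]\times\Omega)$, which yields the weak formulation of the PDE through $\partial_t\varrho+\ddiv E=0$.

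\textbf{The atoms.} Under the first hypothesis ($\mathbf{x}_0$ in the interior), Prop.~\ref{prop.atoms_interior} and induction show that every $x^\tau_{k,i}$ stays in $\intt\Omega$ as long as $a_{k,i}>0$. The normal cone term then vanishes, and the discrete equation reads
\[
  \frac{x_{k+1,i}-x_{k,i}}{\tau} = -a_{k+1,i}x_{k+1,i}+\int_{\Omega_{k+1,i}}x\,\dd\varrho_{k+1}.
\]
We test against $\phi\in\mathscr{C}^1_c$ in time and sum by parts. The right-hand side converges by the same cell-convergence argument, which identifies $\dot x_i$. After $a_i$ vanishes the right-hand side is zero, so $x_i$ freezes, consistent with the system.

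Under the $\mathscr{C}^1$-boundary hypothesis, the normal vectors $n^\tau_{k,i}$ are bounded in $L^2_t$ (by the estimate \eqref{eq.a_priori_distance} and the boundedness of the other terms). Extracting a weak limit, we keep it in $N_\Omega(x_i)$ by closedness of the graph of the normal cone, which holds since $\partial\Omega$ is $\mathscr{C}^1$ and hence has continuous normals.

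\textbf{The weights (main obstacle).} By Prop.~\ref{prop.estimate_ai}, once $a_i$ vanishes it stays zero, so it suffices to work on intervals $[t_n+\eta,\,t_{n+1}-\eta]$ on which the living weights satisfy $a_i\ge\delta>0$. On such an interval, uniform convergence gives $a^\tau_i\ge\delta/2$ and $a^\tau_i<1$ unless $N=1$, so $i\in I_{k+1}$, and $g'(a_i^\tau)$ stays bounded and converges uniformly. The hard part is the potential $\psi^\tau$: it equals $\bar\psi^\tau$ plus the constant $c_{k+1}$, which contains $\sum_{j\in I_k\setminus I_{k+1}} a_{k,j}/\tau$ and could blow up. On the open interval, however, $I_k\setminus I_{k+1}$ is eventually empty for small $\tau$, except possibly for atoms already in $\Lambda$ that still carry tiny mass.

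I would handle this as follows. I would work with the equation summed against the constraint $\sum_i\dot a_i=0$, and show that $c_{k+1}$ converges in $L^1$ of the interval, using the monotone decay of the dead mass established in the proof of Prop.~\ref{prop.estimate_ai}: its total over the interval is at most $CN\sqrt\tau$, which tends to zero. Then $\psi^\tau$ converges, and the limit $\psi$ is a Kantorovich potential fixed by $\sum_i\dot a_i=0$. Passing to the limit in the summed discrete equation gives $\dot a_i=-g'(a_i)+\psi_i$ on each such interval. Letting $\eta\to0$ and gluing the intervals concludes the proof. I expect this control of the normalizing constant near vanishing times to be the most delicate step.
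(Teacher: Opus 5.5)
Your treatment of the momentum $E$ and of the atoms is essentially the paper's: normalized potentials $\bar\psi^\tau$, invariance of Laguerre cells under constants, Lemma~\ref{lemma.cont_laguerre}, and Prop.~\ref{prop.atoms_interior}. Your weak-limit argument for the normal vectors is more careful than the paper's, and weak--strong closedness of the normal-cone graph holds for any convex $\Omega$.

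The real divergence is in the weights. The paper never controls the normalizing constant. For $\varphi\in\mathscr{C}^\infty_c((0,t_i))$ it picks $j$ with $t_j\ge t_i$, so that $a^\tau_i,a^\tau_j$ are bounded below on $\supp\varphi$, and subtracts the two discrete equations. The constant $c_{k+1}$ cancels, leaving only $g'(a^\tau_i)$, $g'(a^\tau_j)$ and the bounded $\bar\psi^\tau$. The limit constant $c_t$ is then defined afterwards as the common value of $\dot a_i+g'(a_i)-\bar\psi_{t,i}$, which is automatically in $L^2$.

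Your alternative is to prove $c^\tau\to c$ in $L^1$, and it has a gap at exactly the step you flagged. You treat the killed-mass term as the only singular part of
\[
  c_{k+1}=\frac{1}{|I_{k+1}|}\Bigl(\sum_{j\in I_{k+1}}\bigl(\bar\psi_{k+1,j}+g'(a_{k+1,j})\bigr)+\frac{1}{\tau}\sum_{j\in I_k\setminus I_{k+1}}a_{k,j}\Bigr).
\]
But the first sum runs over all of $I_{k+1}$. That includes the dying atoms $j\in\Lambda\cap I_{k+1}$ that still carry a small positive mass, and for these $g'(a_{k+1,j})\to+\infty$ as $\tau\to0$. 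So $c_{k+1}$ blows up on every step where such an atom survives; this is also why your opening claim that $\psi^\tau$ is bounded fails. A bound on the total killed mass says nothing about these contributions.

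The ingredient you cite does close the gap once the constant is written through the set $L$ of living atoms only, using $\sum_i(a_{k+1,i}-a_{k,i})=0$:
\[
  |L|\,c_{k+1}=\sum_{i\in L}\bigl(g'(a_{k+1,i})+\bar\psi_{k+1,i}\bigr)+\frac{D_k-D_{k+1}}{\tau},
  \qquad D_k\eqdef\sum_{j\in\Lambda}a_{k,j}.
\]
The monotonicity $D_{k+1}\le D_k$ from the proof of Prop.~\ref{prop.estimate_ai} makes the last term nonnegative. Its time integral is at most the initial dead mass, which tends to $0$, so this term vanishes in $L^1$.

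Strong convergence is more than you need, though. The limit equation is linear in the constant, and the same identity says $c^\tau=\frac{1}{|L|}\sum_{i\in L}\bigl(\dot{\bar a}^\tau_i+g'(a^\tau_i)+\bar\psi^\tau_i\bigr)$. This converges weakly in $L^2$ with no quantitative information on the dying atoms. That is precisely the paper's cancellation, and it is why the paper only needs the qualitative conclusion of Prop.~\ref{prop.estimate_ai}.
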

\begin{proof}
    Recall that from Proposition~\ref{prop.interpo_apriori_limits}, the pair $(\varrho, E)$ solves the continuity equation. Hence, to obtain the desired limit it suffices to characterize the momentum measure $E$. 

    To this end, recall that $E^\tau \cvweak{\tau \to 0}{} E$ in $\mathscr{M}^d([0,T]\times \Omega)$, where $E^\tau$ is the family of staircase momenta, characterized from the Euler-Lagrange equations (Prop.~\ref{prop.optimality_conditions}) as
    \[
      E^\tau = \mathbf{v}^\tau\varrho^\tau = 
      - \nabla P(\varrho^\tau) 
      -\varrho^\tau \sum_{i= 1}^N (x-\xtau{t,i})\mathbbm{1}_{\Omega^\tau_{t,i}}. 
    \]
    In the sequel, consider $\varphi \in \mathscr{C}^\infty_c((0,T)\times \Omega; \mathbb{R}^d)$, we have that 
    \begin{align*}
      \int_{[0,T]\times \Omega} \varphi\cdot \dd E 
      &= 
      \lim_{\tau \to 0}
      \int_{[0,T]\times \Omega} \varphi\cdot \dd E^\tau \\ 
      &= 
      \lim_{\tau \to 0}
      \left(
        \int_{[0,T]\times \Omega} 
        \divv \varphi P(\varrho^\tau)\dd x 
        - 
        \sum_{i= 1}^N 
        \int_{[0,T]\times \Omega} 
        \varphi \cdot (x-\xtau{t,i})\mathbbm{1}_{\Omega^\tau_{t,i}}\dd \varrho^\tau
      \right)
      . 
    \end{align*}
    From the strong convergence of $P(\varrho^\tau)$ to $P(\varrho)$ in $L^1([0,T]\times \Omega)$, the first term has the desired limit. As for the second, recall that $\xtau{\cdot,i} \cvstrong{\tau \to 0}{} x_{\cdot,i}$ uniformly in $[0,T]$. As for the Kantorovitch potentials, we recall as in the proof of Proposition~\ref{prop.optimality_conditions} that the potentials are unique up to constants and that the potentials used in the optimality conditions for $\mathbf{a}_{k+1}^\tau$ are for the form
    \[
      \psi_{k+1,i}^\tau = \bar\psi_{k+1,i}^\tau + c_{k+1}^\tau,
    \]
    where the first term corresponds to another potential bounded by a constant depending on the domain, $|\bar\psi_{k+1,i}^\tau| \le C_\Omega$, and $c_{k+1}^\tau$ is a constant that might explode as $\tau \to 0$. We can therefore define the curves 
    \[
      \psi^\tau_{t} \eqdef \psi^\tau_{k+1} \text{ and } \bar\psi^\tau_{t} \eqdef \bar\psi^\tau_{k+1}, \text{ if } t \in (k\tau,(k+1)\tau]. 
    \]
    So that, $\bar\psi^\tau_t$ is uniformly bounded and converges point-wise in $[0,T]$ to $\bar \psi_t$, a bounded Kantorovitch potential associated with $W_2^2(\varrho_t, \mu_t)$. But recall from the Lemma~\ref{lemma.cont_laguerre} that
    \[
      (\psi, \mathbf{x}) \mapsto \Lag_i(\psi, \mathbf{x})
    \]
    is continuous from $\mathbb{R}^d\times \mathbb{R}^d$ to $L^1$, and since Laguerre cells are invariant by the addition of constants, it follows that 
    \[
      \Omega^\tau_{t,i} = 
      \Lag_i(\psi^\tau_t, \mathbf{x}^\tau_t) \cvstrong{\tau \to 0}{L^1} 
      \Lag_i(\psi_t, \mathbf{x}_t) 
      =
      \Omega_{t,i} \text{ point-wise in }[0,T]. 
    \]

    From Egorov's Theorem, for each $\varepsilon>0$, there a set $I_\varepsilon \subset [0,T]$, with $|[0,T]\setminus I_\varepsilon| \le \varepsilon$, where this point-wise convergence can be strengthened to uniform converge. It then follows that for every $\varepsilon> 0$, we have 
    \begin{align*}
      &
      \limsup_{\tau \to 0}
      \left|
        \int_{[0,T]\times \Omega} 
        \varphi \cdot (x-\xtau{t,i})\mathbbm{1}_{\Omega^\tau_{t,i}}\dd \varrho^\tau
        -
        \int_{[0,T]\times \Omega} 
        \varphi \cdot (x-x_{t,i})\mathbbm{1}_{\Omega_{t,i}}\dd \varrho
      \right|\\ 
      &\quad \le 
      \limsup_{\tau \to 0}
      \left|
        \int_{I_\varepsilon \times \Omega} 
        \varphi \cdot (x-\xtau{t,i})\mathbbm{1}_{\Omega^\tau_{t,i}}\dd \varrho^\tau
        -
        \int_{I_\varepsilon \times \Omega} 
        \varphi \cdot (x-x_{t,i})\mathbbm{1}_{\Omega_{t,i}}\dd \varrho
      \right|
      + C_{\varphi, \Omega, \varrho} \varepsilon.
    \end{align*}
    From the strong convergence of $\varrho^\tau$ to $\varrho$ in $L^1([0,T]\times\Omega)$, Prop.~\ref{prop.convPrho}, the uniform convergence of $x^\tau_{\cdot,i}$ and of $\mathbbm{1}_{\Omega^\tau_{t,i}}$, the second $\limsup$ above is null. As a consequence, we conclude that
    \[
      E = -\nabla P(\varrho) - \varrho \sum_{i= 1}^N (x-x_{t,i})\mathbbm{1}_{\Omega_{t,i}}.
    \]

    Moving on to the dynamics of the atoms, recall that from Proposition~\ref{prop.interpo_apriori_limits} it holds that $\mathbf{\bar x}^\tau$ converges to $\mathbf{x}$ weakly in $H^1([0,T])$ both $\mathbf{\bar x}^\tau, \mathbf{x}^\tau$ converge uniformly (and hence also strongly in $L^2([0,T])$) to $\mathbf{x}$. To characterize the limit dynamics of the $\mathbf{x}$ variables, consider $\varphi \in \mathscr{C}^\infty_c((0,T))$ and we treat two cases: either
    \begin{itemize}
      \item the initial conditions $\mathbf{x}_0 \in {(\intt \Omega)}^{\otimes N}$;
      \item $\Omega$ has a $\mathscr{C}^1$ boundary.
    \end{itemize}

    For simplicity, let us discuss the first one. In this case, Prop.~\ref{prop.atoms_interior} ensures that the entire sequence ${\left(\mathbf{x}_k\right)}_{k}$ is contained in the interior of $\Omega$ and therefore the boundary effects of the normal cone does not intervene in the Euler-Lagrange equations from Prop.~\ref{prop.optimality_conditions}, so that we have
    \begin{align*}
      -
      \int_0^T\varphi'(t) x_{t,i} \dd t 
      &=
      \lim_{\tau \to 0} 
      -\int_0^T\varphi'(t) \bar x_{t,i}^\tau \dd t 
      =
      \lim_{\tau \to 0}
      \int_0^T\varphi(t) \dot{\bar x}^\tau_{t,i} \dd t \\
      &= 
      \lim_{\tau \to 0} - 
      \int_0^T\varphi(t) 
      \left(
        \atau{k+1,i}\cdot\xtau{k+1,i} 
        - \int_{\Omega^\tau_{k+1,i}}x \dd \rhotau{k+1}
      \right)\dd t\\
      &=
      -
      \int_0^T\varphi(t) 
      \left(
        a_{t,i}\cdot x_{t,i} 
        - \int_{\Omega_{t,i}}x \dd \varrho_t
      \right)\dd t,
    \end{align*}
    where the last limit hold, as before, because of the uniform convergence in $t$ of the Laguerre cells and the convergence of $\varrho^\tau$ to $\varrho_t$.   This gives the desired characterization of the flow of $\mathbf{x}_t$. The calculations are analogous in the second case, with the addition of the normal vector $n_\Omega(x_{k+1,i}^\tau)$, which converges to a vector in $N_\Omega(x_{t,i})$ since the boundary is smooth.  

    The dynamics of the weights can be deduced in a similar way to the convergence of the atoms. Also from Proposition~\ref{prop.interpo_apriori_limits}, we know that $\mathbf{\bar a}^\tau$ converges to $\mathbf{a}$ weakly in $H^1([0,T])$ and both $\mathbf{\bar a}^\tau, \mathbf{a}^\tau$ converge uniformly to $\mathbf{a}$. On the other hand, we must take into account the singularity at zero presented by the optimality conditions of the $a_i$ variables given by the term $g'(a_i)$, which is compensated with the Kantorovitch potentials. The issue as we have seen is that the Kantorovitch potentials consists of a bounded potential added by a constant that might diverge to compensate for the singularity in any single $g'(a_i(t))$ that goes to infinity. 
    
    As in the proof of Proposition~\ref{prop.estimate_ai}, define the quantities
    \[
      t_i \eqdef \inf \left\{
        t \ge 0 : a_i(t) = 0
      \right\} \text{ for } i = 1,\dots, N,
    \]
    and for a fixed $i$, take some other index $j$ such that $t_j \ge t_i$. If such $j$ does not exist it means that $a_i \equiv 1$ and $t_1 = +\infty$. Hence, to characterize the dynamics of $a_i$ it suffices to consider test functions in $\varphi \in \mathscr{C}^\infty_c((0,t_i))$, since after this time we know that $a_i$ vanishes. As a result, for such test function we obtain that $\displaystyle \inf_{\supp \varphi} a_i, \inf_{\supp \varphi} a_j > 0$ and from the uniform convergence the same holds for $a_i^\tau$ and $a_j^\tau$. As a result we obtain that
    \begin{align*}
      \int_0^T (a_{i}(t) - a_{j}(t)) \dot{\varphi}(t) \dd t 
      &= 
      \lim_{\tau \to 0}
      \int_0^T
      (a_{i}^\tau(t) - a_{j}^\tau(t)) \dot{\varphi}(t) \dd t \\
      &= 
      \lim_{\tau \to 0} 
      - 
      \int_0^T
      (\dot{a}_{i}^\tau(t) - \dot{a}_{j}^\tau(t)) \varphi(t) \dd t \\ 
      &=
      \lim_{\tau \to 0} 
      \int_0^T\varphi(t) 
      \left(
        g'(a_{i}^\tau(t)) - \psi^\tau_{t,i} - g'(a_{j}^\tau(t)) + \psi^\tau_{t,j}
      \right)\dd t\\
      &=
      \lim_{\tau \to 0} 
      \int_0^T\varphi(t) 
      \left(
        g'(a_{i}^\tau(t)) - \bar\psi^\tau_{t,i} - g'(a_{j}^\tau(t)) + \bar\psi^\tau_{t,j}
      \right)\dd t\\ 
      &= 
      \int_0^T\varphi(t) 
      \left(
        g'(a_{i}(t)) - \bar\psi_{t,i} - g'(a_{j}(t)) + \bar\psi_{t,j}
      \right)\dd t,
    \end{align*}
    since $g \in \mathscr{C}_{loc}^1((0,1])$, where $a_i^\tau,a_j^\tau$ remains bounded from bellow in the support of $\varphi$, and the Kantorovitch potentials $\bar \psi_t$ being uniformly bounded and Lipschitz, converge to the Kantorovitch potential $\bar \psi_t$.

    We then conclude that for any pair $i,j$ such that $t_j \ge t_i$ their corresponding dynamics satisfy 
    \[
      \dot{a}_i(t) + g'(a_i(t)) - \bar \psi_{t,i} 
      = 
      \dot{a}_j(t) + g'(a_j(t)) - \bar \psi_{t,j}
      = 
      c_t, 
    \] 
    where $c_t \in L^2((0,t))$ for $t < t_i$. We can now do as in Proposition~\ref{prop.optimality_conditions} and define a new Kantorovitch potential $\psi_t = \bar \psi_t + c_t$, in such a way that 
    \[
      \dot{a}_i(t) = - g'(a_i(t)) + \psi_{t,i} \text{ in } L^2((0,t_i)).
    \]
    In addition, since $a_h(t) = 0$ for $t \ge t_h$, we obtain that 
    \[
      0 = \frac{\dd}{\dd t}\sum_{i \sim a_i(t) > 0}a_i(t) 
      =
      \sum_{i \sim a_i(t) > 0}
      -g'(a_i(t)) + \psi_{t,i},
    \]
    which finishes the desired characterization of the dynamics and the result follows. 
\end{proof}

\subsection{Strong \texorpdfstring{$L^2H^1$}{L2H1} convergence in Porous Medium case}\label{sec.conv_porous_medium}

In the previous section, as in seminal papers that introduced the JKO scheme~\cite{jordan1998variational,otto2001geometry}, the convergence of the minimizing movement scheme was proven in the $L^1$ topology. This is natural when working with probability measures, but certainly the limit PDE (\textit{e.g.} heat, Fokker-Plack, Porous Medium equation) enjoys much more regularity, which suggests that convergence in stronger topologies also holds. In recent papers~\cite{dimarino2022jko,santambrogio2024strong} the authors show that the JKO scheme of the Boltzman entropy functional
\begin{equation}\label{eq.entropy_functional}
    \mathscr{F}(\varrho) 
    = 
    \mathscr{H}(\varrho) 
    \eqdef 
    \begin{cases}
      \displaystyle
      \int_\Omega \varrho\log\varrho \dd x,& \text{ if } \varrho \ll \mathscr{L}^d\mres \Omega,\\ 
      +\infty,& \text{otherwise,}
    \end{cases}
\end{equation}
(resp.~with a potential energy term), converges to the unique solution of the heat (resp.~Fokker-Plank) equation in the strong $L^2 H^2$ topology. 

In the case of the PDE from~\eqref{eq.PDE_ODEsysthem}, the advection term coming from the semi-discrete transport is not sufficiently smooth to adapt the arguments from~\cite{dimarino2022jko,santambrogio2024strong}, so that we can expect $L^2 H^1$ convergence instead. In fact, we shall prove convergence of the staircase interpolation of the pressure variable $P(\varrho^\tau)$ to $P(\varrho)$ in the strong topology of $L^2 H^1$ when the internal energy is given by a Porous Medium term of the form
\begin{equation}\label{eq.PME_functional}
  \mathscr{F}_m(\varrho) 
  \eqdef 
  \begin{cases}
    \displaystyle
    \frac{1}{m-1}\int_\Omega \varrho^m \dd x,& \text{ if } \varrho \ll \mathscr{L}^d\mres \Omega,\\ 
    +\infty,& \text{otherwise,}
  \end{cases}
\end{equation}
obtained for $F(\rho) = F_m(\rho) = \displaystyle \frac{1}{m-1}\rho^m$, and yielding a pressure $P_m(\rho) = \rho^m$.  

Following~~\cite{dimarino2022jko,santambrogio2024strong}, the strategy of the proof will consist on first proving $L^p$ estimates for iterates of the JKO scheme, by bounding their $L^p$ norms by the one of the previous estimate multiplied by a factor that remains summable as $\tau \to 0$. This allows to pass to the limit as $p \to \infty$ to obtain also $L^\infty$ bounds whenever the initial condition is bounded. Our argument consists of exploiting the displacement convexity of $\rho \to \norm{\rho}_p^p$ as in~\cite{dimarino2022jko} in order to deal with the Porous Medium term, which also gives the desired bounds for the gradient. The strong convergence then will follow with an adaptation of the arguments in~\cite{santambrogio2024strong}. The same convergence can be obtained for linear diffusion, but we have chosen to focus on the Porous Medium case for a more concise presentation. 

\begin{theorem}\label{thm.L2H1convergence}
  Assume that $\Omega$ be a convex and bounded domain of $\mathbb{R}^d$ with Lipschitz boundary. Let ${\left(\varrho_k^\tau, \mathbf{x}_k^\tau, \mathbf{a}_k^\tau\right)}_{k \in \mathbb{N}}$ be a sequence obtained via the JKO scheme, with the internal energy given by the Boltzmann entropy~\eqref{eq.entropy_functional} or the Porous Medium term $\mathscr{F}_m$ from~\eqref{eq.PME_functional}. Then the following estimates hold
  \begin{enumerate}
    \item for all $1< p < +\infty$, if $\varrho_0 \in L^p(\Omega)$ it holds that for all $k \ge 1$ that
    \begin{equation*}
      \norm{\varrho_{k}^\tau}_{L^p(\Omega)} 
      \le 
      {\left(
        1 - \tau d(p-1)
      \right)}^{-1/p} 
      \norm{\varrho_{k-1}^\tau}_{L^p(\Omega)}.
    \end{equation*}
    If $\varrho_0 \in L^\infty(\Omega)$, taking the limit as $p\to \infty$ we obtain 
    \begin{equation*}
      \norm{\varrho_k^\tau}_{L^\infty(\Omega)} 
      \le 
      e^{\tau d} \norm{\varrho_{k-1}^\tau}_{L^\infty(\Omega)}.
    \end{equation*}
    \item For $\varrho_0 \in L^{m+1}(\Omega)$, there exists a constant $C$ depending on $m,T,\Omega$ and $\varrho_0$ such that
    \[
      \int_0^T\int_\Omega|\nabla P_m(\varrho^\tau)|^2\dd x \dd t
      \le C, 
    \]
    for all $\tau$ small enough. 
  \end{enumerate}

  In addition, if $\varrho_0 \in L^{m+1}(\Omega)$, up to subsequences the pressures $P_m(\varrho^\tau)$ converge in the strong topology of $L^2 H^1$ to $P(\varrho)$, where $(\varrho,\mathbf{x},\mathbf{a})$ is a solution of~\eqref{eq.PDE_ODEsysthem}. 
\end{theorem}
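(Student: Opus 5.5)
The estimates will be derived from a single discrete inequality for the functional $\mathcal{G}_p(\varrho)\eqdef\int_\Omega\varrho^p\dd x$. I will test the optimality conditions of Proposition~\ref{prop.optimality_conditions} against the displacement convexity of $\mathcal{G}_p$, as in~\cite{dimarino2022jko}, rather than taking first variations directly. Let $T_{k+1}$ be the optimal map from $\varrho_{k+1}$ to $\varrho_k$. The above-tangent inequality for the displacement convex $\mathcal{G}_p$ along the geodesic from $\varrho_{k+1}$ to $\varrho_k$ gives
\[
\mathcal{G}_p(\varrho_k)\ \ge\ \mathcal{G}_p(\varrho_{k+1}) + (p-1)\int_\Omega \nabla(\varrho_{k+1}^p)\cdot (T_{k+1}-\id)\dd x .
\]
I then substitute the Euler--Lagrange equation $(T_{k+1}-\id)/\tau=\nabla F'(\varrho_{k+1})+\nabla\varphi_{k+1}$. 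Here $\varphi_{k+1}=\Phi[\mathbf{x}_{k+1},\psi_{k+1}]=\min_i\bigl(\tfrac12|x-x_{k+1,i}|^2-\psi_{k+1,i}\bigr)$ is the semi-discrete potential from~\eqref{eq.potential_tesselation}. This splits the integral into two terms.

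The diffusion term $\int\nabla(\varrho^p)\cdot\nabla F'(\varrho)$ is nonnegative, since both $\varrho^p$ and $F'(\varrho)$ are nondecreasing functions of $\varrho$. For the advection term I integrate by parts, $\int\nabla(\varrho^p)\cdot\nabla\varphi=-\int\varrho^p\Delta\varphi+\int_{\partial\Omega}\varrho^p\,\partial_n\varphi$. The boundary integral is nonnegative: on $\Omega_i$ we have $\nabla\varphi\cdot n=(x-x_{k+1,i})\cdot n\ge0$, because $x_{k+1,i}\in\overline\Omega$ and $\Omega$ is convex. For the bulk term, $\varphi$ is a minimum of functions with Laplacian $d$, so distributionally $\Delta\varphi\le d$. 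The singular part of $\Delta\varphi$ is $-\sum_{i<j}|x_i-x_j|\,\mathscr{H}^{d-1}\mres\Sigma_{ij}$, which is a nonpositive measure. Together these give
\[
\mathcal{G}_p(\varrho_{k-1})\ \ge\ \bigl(1-\tau d(p-1)\bigr)\mathcal{G}_p(\varrho_k)+\tau(p-1)\int\nabla(\varrho_k^p)\cdot\nabla F'(\varrho_k),
\]
which is item (1). The $L^\infty$ bound follows because $(1-\tau d(p-1))^{-1/p}\to e^{\tau d}$ as $p\to\infty$. The integration by parts needs $\varrho_{k+1}^p\in W^{1,1}$ and traces. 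I will justify it by the usual regularity of JKO minimizers: $F'(\varrho_{k+1})$ is Lipschitz on $\{\varrho_{k+1}>0\}$ by the optimality condition, and, if necessary, I will smooth $\varrho_0$ and use $L^p$-stability.

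For item (2), take $p=m+1$ and $F=F_m$, for which $\varrho\nabla F_m'(\varrho)=\nabla\varrho^m$. The dissipation term becomes
\[
m\int\nabla(\varrho^{m+1})\cdot\nabla F_m'(\varrho)=(m+1)\int\varrho\nabla\varrho^m\cdot\nabla F_m'(\varrho)=(m+1)\int|\nabla P_m(\varrho)|^2 .
\]
I sum the discrete inequality over $k\le T/\tau$. The left-hand side telescopes up to the factors $(1-\tau dm)^{-1}$, whose product is bounded by $e^{CT}$. This gives $\int_0^T\!\int|\nabla P_m(\varrho^\tau)|^2\le C\|\varrho_0\|_{L^{m+1}}^{m+1}$. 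In the entropy case the same computation with $p=2$ gives $\int\!\int|\nabla\varrho^\tau|^2\le C$.

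The strong convergence step follows~\cite{santambrogio2024strong}. By item (2) and Proposition~\ref{prop.convPrho}, $P_m(\varrho^\tau)\rightharpoonup P_m(\varrho)$ weakly in $L^2H^1$, so it suffices to prove $\limsup_\tau\iint|\nabla P_m(\varrho^\tau)|^2\le\iint|\nabla P_m(\varrho)|^2$. I will keep the exact bulk and interface terms in the summed discrete inequality instead of bounding them by $d$. I then compare the result with the exact identity for the limit. That identity is
\[
\tfrac{\dd}{\dd t}\int\varrho^{m+1}=-(m+1)m\int|\nabla P_m|^2+m\int\varrho^{m+1}\Delta\varphi_t-\text{(boundary term)},
\]
which I derive from the weak formulation in Theorem~\ref{thm.convergenceJKO} using $\varrho^{m+1}$ as a test function (a renormalization argument justified by the $L^2H^1$ bound). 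The main obstacle is to pass to the limit in the interface contributions $\int_{\Sigma_{ij}}\varrho^{m+1}|x_i-x_j|\,\dd\mathscr{H}^{d-1}$ and the boundary trace terms, which involve traces of $\varrho^\tau$ on moving hyperplanes. My plan has two parts. First, I will use the $L^2H^1$ bound on $P_m(\varrho^\tau)$ together with the $L^\infty$ bound on $\varrho^\tau$ from item (1) to get compactness of traces of $\varrho^{m+1}=P_m^{(m+1)/m}$ in $L^1_t L^1(\Sigma_{ij})$. Second, I will use the convergence of the Laguerre interfaces given by Lemma~\ref{lemma.cont_laguerre}. Both require that the atoms stay apart, $\mathbf{x}\notin\mathbb{D}_{N,\varepsilon}$; otherwise I can only obtain the inequality in the favourable direction via Fatou, and I will check whether that direction is the one needed.
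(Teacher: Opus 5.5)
Your items (1) and (2) are correct and follow essentially the paper's flow-interchange argument. For (2) you reuse $\Delta\varphi\le d$ and absorb the factors $(1-\tau dm)^{-1}$ when telescoping. The paper instead controls the cross term by Young's inequality and $|\nabla\Phi_{k+1}|\le\diam\Omega$. Both work.

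\textbf{The gap: strong $L^2H^1$ convergence.} Because you keep the integrated-by-parts form of the cross term, you must pass to the limit in $\sum_{i<j}\int_{\Sigma^\tau_{ij}}(\varrho^\tau)^{m+1}|x^\tau_i-x^\tau_j|\,\dd\mathscr{H}^{d-1}$ and in the boundary traces. These live on hypersurfaces that move with $\tau$ and $t$, and you leave this step unresolved.
\begin{itemize}
\item The direction you need is lower semicontinuity: $\liminf_\tau$ of these nonnegative terms must be at least their limit counterpart.
\item A plain Fatou argument does not give this, because a.e.\ convergence in the bulk says nothing about traces on $\Sigma^\tau_{ij}$.
\item Your planned ingredients are not available under the hypotheses. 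The $L^\infty$ bound of item (1) requires $\varrho_0\in L^\infty$, whereas the convergence claim only assumes $\varrho_0\in L^{m+1}$.
\item No uniform separation of the atoms holds in this generality, since weights may vanish and cells may degenerate.
\end{itemize}

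\textbf{The fix: do not integrate by parts.} Since $m\nabla(\varrho^{m+1})=(m+1)\varrho\nabla P_m(\varrho)$ and $\varrho\nabla F_m'(\varrho)=\nabla P_m(\varrho)$, your discrete inequality with $p=m+1$ reads, before integrating by parts,
\[
\int_\Omega\varrho_k^{m+1}\dd x-\int_\Omega\varrho_{k+1}^{m+1}\dd x\ \ge\ (m+1)\,\tau\int_\Omega\Bigl(|\nabla P_m(\varrho_{k+1})|^2+\varrho_{k+1}\nabla P_m(\varrho_{k+1})\cdot\nabla\Phi_{k+1}\Bigr)\dd x .
\]
Test the limit equation with $P_m(\varrho)=\varrho^m\in L^2H^1$, not with $\varrho^{m+1}$, which would produce $\int\varrho^{m+2}$. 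This gives the same relation with equality, with dissipation $(m+1)\int|\nabla P_m|^2$ (not $(m+1)m\int|\nabla P_m|^2$), and with the same bulk cross term.

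That bulk term passes to the limit by weak--strong convergence:
\begin{itemize}
\item $\nabla P_m(\varrho^\tau)\rightharpoonup\nabla P_m(\varrho)$ weakly in $L^2$.
\item $\varrho^\tau\nabla\Phi^\tau=\varrho^\tau\sum_i(x-x^\tau_i)\mathds{1}_{\Omega^\tau_i}\to\varrho\nabla\Phi[\varrho]$ strongly in $L^2$. It is a bounded field converging a.e.\ (Lemma~\ref{lemma.cont_laguerre} and uniform convergence of the atoms), times $\varrho^\tau\to\varrho$ strongly in $L^2$ (from the $L^{m+1}$ bound and Vitali).
\end{itemize}
Combined with lower semicontinuity of $\int\varrho_T^{m+1}$, this gives $\limsup_\tau\iint|\nabla P_m(\varrho^\tau)|^2\le\iint|\nabla P_m(\varrho)|^2$. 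No traces and no separation of atoms are needed.

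\textbf{A second, smaller omission.} Norm convergence in $L^2H^1$ also requires $P_m(\varrho^\tau)\to P_m(\varrho)$ strongly in $L^2_{t,x}$, which you do not address. The paper takes this from Lemma~\ref{lemma.L2L2_strongconv}. It also follows from the $L^1$ convergence plus a uniform $L^q_{t,x}$ bound with $q>2$. That bound comes from interpolating $L^\infty_tL^{(m+1)/m}_x$ with the $L^2_tL^r_x$ bound, $r>2$, given by the Sobolev embedding.
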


These estimates can be proved with the same techniques for the classical JKO scheme for the Porous Medium, without the semi-discrete transport term and with a Lipschitz potential independent of the $\varrho$. In this case, such estimates imply that any solution of the PME obtained with the JKO scheme is an \textit{energy solution}, see~\cite[Section 5.3.2]{vazquez2007porous} and~\cite[Def.~7.1]{bertsch1986density}, defined as follows 
\begin{definition}\label{def.energy_solution}
   Let $m>1$, and $V \in W^{1,\infty}(\Omega)$ be a vector field, which is Lipschitz in space for a.e.~$t$ and for all $\rho \in \mathscr{P}(\Omega)$. Then $\varrho$ is an energy solution of the Porous Medium equation with advection
   \begin{equation}\label{eq.PME_m}
    \tag{$PME_m$}
      \partial_t \varrho = \Delta \varrho^m + \divv\left(\varrho \nabla V\right),
   \end{equation}
   if it is a solution in the sense of distributions such that
   \begin{enumerate}
    \item $\varrho \in \mathscr{C}([0,T];L^1(\Omega))\cap L^\infty([0,T]\times \Omega)$;\\
    \item $ P_m(\varrho) \in L^2([0,T];H^1(\Omega))$.
   \end{enumerate}
\end{definition} 
Although the PME might admit more than one weak solution, there is a unique energy solution, see~\cite[Section 5.3.2]{vazquez2007porous} and~\cite{bertsch1986density}. This discussion gives the following
\begin{corollary}\label{cor.L2H1_PME}
  Let $\Omega$ be a convex and bounded domain of $\mathbb{R}^d$ with Lipschitz boundary. Given a potential $V \in W^{1,\infty}(\Omega)$ and an initial condition $\varrho_0 \in L^{m+1}(\Omega) \cap \mathscr{P}(\Omega)$. Let ${\left(\varrho^\tau\right)}_{\tau > 0}$ be the staircase interpolation of JKO scheme referent to the Porous Medium energy with advection 
  \[
    \rho \mapsto \frac{1}{m-1}\int_\Omega \rho^m \dd x + \int_\Omega V \rho \dd x,
  \]
  then the pressures ${\left(P_m(\varrho^\tau)\right)}_{\tau > 0}$ converge in the strong topology of $L^2\left([0,T];H^1(\Omega)\right)$ to the unique energy solution of the PME with advection introduced by the potential $V$.   
\end{corollary}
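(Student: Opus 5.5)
The corollary will follow by running the proof of Theorem~\ref{thm.L2H1convergence} in the simpler setting where the semi-discrete coupling is replaced by the fixed potential $V$, and then using uniqueness of energy solutions. Concretely, the JKO step reads $\varrho_{k+1}\in\argmin \mathscr{F}_m(\varrho)+\int V\dd\varrho+\frac{1}{2\tau}W_2^2(\varrho_k,\varrho)$. Its optimality conditions, analogous to Prop.~\ref{prop.optimality_conditions}, give $-\mathbf{v}_{(k+1)}=\nabla F_m'(\varrho_{k+1})+\nabla V$ $\varrho_{k+1}$-a.e.

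\textbf{Step 1: $L^p$ and $L^\infty$ bounds.} I would apply the flow-interchange argument of~\cite{dimarino2022jko}, using the displacement convex functional $\rho\mapsto\norm{\rho}_p^p$ (displacement convexity holds because $\Omega$ is convex). Its heat-type flow is compared with the JKO step, and the contribution of $\int V\dd\rho$ is bounded by a term of order $\tau\norm{\nabla V}_\infty$, or by $\tau\norm{D^2V}$ if $V$ is semiconvex. This is the delicate point: if $V$ is only $W^{1,\infty}$, I would instead use the one-step estimate directly. Since $\varrho_0\in L^{m+1}$, this propagates a uniform bound of $\norm{\varrho^\tau_t}_{L^{m+1}}$ on $[0,T]$. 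Then, choosing $p=m+1$ and exploiting the dissipation of the entropy-like functional along the JKO step, I would get item~(2):
\[
\int_0^T\!\!\int_\Omega|\nabla P_m(\varrho^\tau)|^2\dd x\dd t\le C.
\]
The advection contributes only $\norm{\nabla V}_\infty\int\varrho^m$, which is controlled by the $L^{m+1}$ bound.

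\textbf{Step 2: compactness and identification of the limit.} From Prop.~\ref{prop.convPrho}, adapted to this setting, $\varrho^\tau\to\varrho$ in $L^1$. The limit solves~\eqref{eq.PME_m} in the distributional sense, with $P_m(\varrho)\in L^2H^1$ by weak lower semicontinuity. It is also $\mathscr{C}([0,T];L^1)$, by the $W_2$-equicontinuity together with the uniform integrability. If $\varrho_0\in L^\infty$, the $L^\infty$ bound holds as well; otherwise I would rely on the energy-solution notion only through $P_m(\varrho)\in L^2H^1$. So $\varrho$ is an energy solution, and by uniqueness~\cite{vazquez2007porous,bertsch1986density} the whole family converges.

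\textbf{Step 3: upgrading weak $L^2H^1$ convergence to strong.} Following~\cite{santambrogio2024strong}, I would show
\[
\limsup_{\tau\to0}\int_0^T\!\!\int_\Omega|\nabla P_m(\varrho^\tau)|^2\le\int_0^T\!\!\int_\Omega|\nabla P_m(\varrho)|^2.
\]
To do this I would test the discrete scheme against $P_m(\varrho_{k+1})$, that is, use the energy $\frac1{m+1}\int\varrho^{m+1}$ (up to constants) along the scheme. This yields a discrete energy inequality of the form
\[
\tfrac{1}{m+1}\norm{\varrho^\tau_T}_{m+1}^{m+1}+\int_0^T\!\!\int_\Omega\nabla P_m(\varrho^\tau)\cdot\bigl(\nabla P_m(\varrho^\tau)+m\,\varrho^\tau\nabla V\bigr)\le\tfrac{1}{m+1}\norm{\varrho_0}_{m+1}^{m+1}+o(1).
\]
Comparing it with the exact energy equality satisfied by the energy solution, which is obtained by testing the PDE with $P_m(\varrho)$, and passing to the limit, gives the $\limsup$ inequality. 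Here I use lower semicontinuity of $\norm{\varrho_T}_{m+1}$, weak convergence of $\nabla P_m(\varrho^\tau)$, and strong convergence of $\varrho^\tau\nabla V$ in $L^2$; the latter holds by the uniform $L^{m+1}$ bound, interpolated with the $L^1$ convergence, when $2\le m+1$.

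I expect the main obstacle to be this last step: justifying the exact energy equality for the limit, and obtaining the discrete inequality with an error that vanishes as $\tau\to0$ in the flow-interchange computation. Weak convergence together with convergence of the norms then gives strong convergence of $\nabla P_m(\varrho^\tau)$ in $L^2$. Combined with $L^2$ convergence of $P_m(\varrho^\tau)$ itself (from the $L^1$ convergence and the uniform bounds), this yields the strong $L^2([0,T];H^1(\Omega))$ convergence claimed in the corollary.
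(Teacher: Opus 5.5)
Your proposal is correct and follows essentially the paper's route. You rerun the proof of Theorem~\ref{thm.L2H1convergence} with $\nabla\Phi$ replaced by $\nabla V$: flow interchange with $\frac{1}{m+1}\int_\Omega\varrho^{m+1}$, then the summed discrete inequality compared against the energy identity obtained by testing the limit equation with $P_m(\varrho)$, closed by lower semicontinuity and weak--strong convergence of the cross term. Convergence of the whole family then follows from uniqueness of energy solutions, as in the paper.

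Two of your steps differ from the paper, and both are sound. First, you work directly at $p=m+1$ and absorb the advection term by Young's inequality plus a discrete Gr\"onwall argument; this is needed because no bound like $\Delta\Phi\le d$ is available for a merely Lipschitz $V$. Second, you obtain the $L^2$ convergence of $\varrho^\tau$ and $P_m(\varrho^\tau)$ from a.e.\ convergence plus uniform integrability, rather than from the interpolation lemma of~\cite{santambrogio2024strong}. One correction: the advection coefficient in your discrete inequality should be $1$, not $m$, so that it matches the continuous identity.
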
 

The $L^2H^2$ convergence for the Porous Medium case remains open, even with a smooth advection term. The main ingredient to prove such strong convergence in the Fokker-Planck case is the 5-gradients inequality introduced in~\cite{de2016bvestimates}. In~\cite[Lemma 5.1]{santambrogio2024strong}, the inequality is derived with a remainder term which can be controlled and then exploited to obtain the stronger convergence.

The major difficulty of the case with a semi-discrete transportation term is that it introduces to the dynamics a vector field with a singular Laplacian, which appear in the computations even for the discrete-time scheme. Hence, our first step in the following Lemma is to compute this Laplacian explicitly.  
\begin{lemma}\label{lemma.divergence_computation}
    Given $(\varrho, \mathbf{x}, \mathbf{a})$, define the potential 
    \begin{equation}\label{eq.Kant_potential}
        \Phi[\varrho, \mathbf{x}, \mathbf{a}](x)
        \eqdef 
        \sum_{i = 1}^N 
        \left(
          \frac{1}{2}|x - x_i|^2 - \psi_i
        \right)\mathds{1}_{\Omega_i},
    \end{equation}
    where ${(\Omega_i)}_{i = 1}^N$ corresponds to the optimal Laguerre tessellation associated to the transportation of $\varrho$ to $\mu_{\mathbf{x},\mathbf{a}}$ and ${(\psi_i)}_{i = 1}^N$ is the corresponding Kantorovitch potential. 

    The gradient vector field of $\Phi[\varrho, \mathbf{x}, \mathbf{a}]$ is given by 
    \begin{equation}\label{eq.Kant_potential_vf}
        \nabla \Phi[\varrho, \mathbf{x}, \mathbf{a}](x)
        = 
        \sum_{i = 1}^N 
        \left(
          x - x_i
        \right)\mathds{1}_{\Omega_i},
    \end{equation}
    and its Laplacian is the following measure 
    \begin{equation}\label{eq.Kant_potential_Laplacian}
      \Delta \Phi[\varrho, \mathbf{x}, \mathbf{a}] 
      = 
      d 
      + 
      \sum_{j < i} 
      (x_i-x_j)\cdot n_{\Omega_i}
      \mathscr{H}^{d-1}\mres \partial\Omega_j \cap \partial\Omega_i.
    \end{equation}
    In particular, it holds that $\Delta \Phi[\varrho, \mathbf{x}, \mathbf{a}] \le d$. 
\end{lemma}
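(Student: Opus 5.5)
The plan is to compute the distributional gradient and Laplacian of $\Phi$ by integrating against test functions and splitting the integral over the cells. The key preliminary observation is that $\Phi$ is continuous on $\Omega$. Indeed, by construction $\Phi(x)=\min_{j}\left(\frac12|x-x_j|^2-\psi_j\right)$: on $\Omega_i$ the $i$-th term realizes the minimum, by definition of the Laguerre cell. Hence $\Phi$ is a minimum of finitely many smooth (quadratic) functions, so it is Lipschitz, and in fact semi-concave, on $\Omega$.

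\textbf{Gradient.} Being Lipschitz, $\Phi$ has a distributional gradient in $L^\infty$ that coincides with its a.e.\ gradient. Since the interfaces $\Sigma_{ij}=\partial\Omega_i\cap\partial\Omega_j$ lie in hyperplanes, they are Lebesgue-negligible. In the interior of each $\Omega_i$ we have $\nabla\Phi=x-x_i$, which gives \eqref{eq.Kant_potential_vf}.

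\textbf{Laplacian.} For $\phi\in\mathscr{C}^\infty_c(\intt\Omega)$ we compute
\[
\langle\Delta\Phi,\phi\rangle=-\sum_i\int_{\Omega_i}(x-x_i)\cdot\nabla\phi\,\dd x .
\]
Each $\Omega_i$ is a convex polytope intersected with $\Omega$, so it is Lipschitz and the divergence theorem applies cell by cell:
\[
-\int_{\Omega_i}(x-x_i)\cdot\nabla\phi=\int_{\Omega_i}d\,\phi-\int_{\partial\Omega_i}\phi\,(x-x_i)\cdot n_{\Omega_i}\,\dd\mathscr{H}^{d-1}.
\]
The parts of $\partial\Omega_i$ on $\partial\Omega$ contribute nothing, since $\phi$ has compact support. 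The remaining boundary is the union of the faces $\Sigma_{ij}$, and on $\Sigma_{ij}$ we have $n_{\Omega_j}=-n_{\Omega_i}$. Pairing the contributions of $i$ and $j$ on the common face gives
\[
-\int_{\Sigma_{ij}}\phi\,\big[(x-x_i)-(x-x_j)\big]\cdot n_{\Omega_i}=\int_{\Sigma_{ij}}\phi\,(x_i-x_j)\cdot n_{\Omega_i}.
\]
Summing over unordered pairs gives \eqref{eq.Kant_potential_Laplacian}. Faces of dimension lower than $d-1$, where three or more cells meet, carry no $\mathscr{H}^{d-1}$ mass, so they do not contribute.

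\textbf{Sign.} The face $\Sigma_{ij}$ lies in the hyperplane $\{x:(x_j-x_i)\cdot x=\text{const}\}$. Moreover, $\Omega_i$ lies on the side where $\frac12|x-x_i|^2-\psi_i\le\frac12|x-x_j|^2-\psi_j$, that is, where $x\cdot(x_j-x_i)\le c_{ij}$. So the outward normal of $\Omega_i$ is $n_{\Omega_i}=(x_j-x_i)/|x_j-x_i|$. This gives
\[
(x_i-x_j)\cdot n_{\Omega_i}=-|x_i-x_j|\le0,
\]
and therefore $\Delta\Phi\le d$. Equivalently, this is the semi-concavity of $\Phi$ as a minimum of functions with Hessian $I_d$.

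\textbf{Main obstacle.} The only delicate point is the justification of the divergence theorem on the cells and the cancellation bookkeeping. In particular, one should assume $x_i\neq x_j$ for the cells with nonempty interior; coinciding atoms with equal potentials give degenerate, possibly identical, cells. Such degenerate pairs have $x_i-x_j=0$ and contribute nothing, and for the gradient formula the cells only need to partition $\Omega$ up to null sets. I would handle these cases by noting the formula is unaffected, and otherwise rely on the polyhedral structure of the cells.
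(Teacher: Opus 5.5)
Your proposal is correct and follows essentially the same route as the paper. You write $\Phi=\min_j\bigl(\frac12|x-x_j|^2-\psi_j\bigr)$ to get the a.e.\ gradient, then apply the divergence theorem cell by cell and pair the two contributions on each common face $\Sigma_{ij}$. The differences are minor and slightly in your favour. The paper tests against $\varphi\in\mathscr{C}^1$ up to $\partial\Omega$ and checks that the $\partial\Omega$ terms cancel; this is the Green formula it later uses against $\varrho_{k+1}^p$, and your cell-wise computation gives it too if you keep those terms. For the sign, the paper splits $(x_i-x_j)\cdot n_{\Omega_i}$ into $(x_i-x)\cdot n_{\Omega_i}+(x_j-x)\cdot n_{\Omega_j}$ and appeals to convexity, but the individual terms can be positive, since a Laguerre cell need not contain its own site. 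Your explicit normal $n_{\Omega_i}=(x_j-x_i)/|x_j-x_i|$ gives $-|x_i-x_j|\le 0$ directly.
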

\begin{proof}
    For simplicity of notation we shall refer to $\Phi[\varrho, \mathbf{x}, \mathbf{a}]$ as $\Phi$ omitting the arguments. Notice that from the definition of Laguerre tessellations, the potential defined in~\eqref{eq.Kant_potential} can be rewritten as 
    \[
      \Phi[\varrho, \mathbf{x}, \mathbf{a}](x)
      = 
      \min_{i = 1,\dots,N} 
      \frac{1}{2}|x - x_i|^2 - \psi_i.
    \]
    As an optimal Kantorovitch potential, it is Lipschitz continuous, and therefore it is a.e.~differentiable. As a result, by the envelope theorem its gradient must coincide with the expression in~\eqref{eq.Kant_potential_vf}. We must only compute its Laplacian in the sense of distributions, for this consider some $\varphi \in \mathscr{C}^1(\Omega)$ and compute 
    \begin{align*}
      \int_\Omega \varphi \dd(\Delta \Phi) \dd x
      &= 
      - 
      \int_\Omega \nabla \varphi \cdot \nabla \Phi \dd x
      + 
      \int_{\partial\Omega} \varphi \nabla \Phi\cdot n_\Omega \dd\mathscr{H}^{d-1}\\ 
      &= 
      - 
      \sum_{i = 1}^N 
      \int_{\Omega_i} \nabla \varphi \cdot (x-x_i) \dd x
      + 
      \sum_{i = 1}^N
      \int_{\partial\Omega \cap \partial \Omega_i} \varphi (x-x_i)\cdot n_\Omega \dd\mathscr{H}^{d-1}.
    \end{align*}
    Each integral in the first sum above can be developed as 
    \begin{align*}
      \int_{\Omega_i} \nabla \varphi \cdot (x-x_i) \dd x
      = 
      &-d \int_{\Omega_i}\varphi\dd x 
      + 
      \int_{\partial \Omega_i \cap \partial \Omega} 
      \varphi (x - x_i)\cdot n_\Omega \dd \mathscr{H}^{d-1}\\
      &+ 
      \int_{\partial \Omega_i \setminus \partial \Omega} 
      \varphi (x - x_i)\cdot n_{\Omega_i} \dd \mathscr{H}^{d-1},
    \end{align*}
    where we have used the fact that $n_{\Omega_i}$ coincides with $n_\Omega$ at $\partial \Omega$. Define also $\Sigma_{ij} \eqdef \partial \Omega_i \cap \partial \Omega_j$ for $i \neq j$, and set the convention that $n_{\Sigma_{ij}} = n_{\Omega_i} = - n_{\Omega_j}$. Since the intersections $\partial \Omega_i \cap \partial \Omega_j \cap \partial \Omega$ are $\mathscr{H}^{d-1}$ negligible, summing all these contributions we obtain the following expression for the Laplacian
    \begin{align*}
      \Delta \Phi
      = 
      d 
      + 
      \sum_{j < i} 
      (x_i-x_j)\cdot n_{\Omega_i}
      \mathscr{H}^{d-1}\mres \partial\Omega_j \cap \partial\Omega_i.
    \end{align*}
    However, notice that for any point $x \in \partial\Omega_j \cap \partial\Omega_i$, we have from convexity that 
    \begin{align*}
      (x_i-x_j)\cdot n_{\Omega_i}(x) 
      &= 
      (x_i-x)\cdot n_{\Omega_i}(x)
      -
      (x_j-x)\cdot n_{\Omega_i}(x)\\
      &= 
      (x_i-x)\cdot n_{\Omega_i}(x)
      +
      (x_j-x)\cdot n_{\Omega_j}(x) \le 0,
    \end{align*} 
    and we conclude that $\Delta \Phi \le d$. 
\end{proof} 

Now we pass to the question of $L^p,L^\infty$ estimates for iterates of the JKO scheme and to $H^1$ estimates for the pressure gradient. The arguments are an adaptation of the ones in~\cite{dimarino2022jko,santambrogio2024strong}, relying on the \textit{flow interchange technique}. 

\begin{proof}[Proof of items (1) and (2) of Theorem~\ref{thm.L2H1convergence}:]
    To simplify notation we drop the dependence on $\tau$ from the iterates of the JKO scheme. As in~\cite{dimarino2022jko}, the strategy of the proof consists of combining Lemma~\ref{lemma.divergence_computation} above with the geodesic convexity of the functional 
    \[
      \varrho \mapsto 
      \mathscr{F}_p(\varrho) = \frac{1}{p-1}\int_\Omega \varrho^p\dd x. 
    \]
    Indeed, if $P_p(\varrho), P_p(\eta) \in W^{1,1}(\Omega)$ and $\varrho_t \eqdef {((1-t)\id + tT)}_\sharp \varrho$ is the geodesic between them, it holds, for instance from~\cite[Lemma~10.4.4.]{Ambrosio2005Gradient}, that 
    \begin{align*}
      \left.\frac{\dd}{\dd t}\right|_{t = 0^+} 
      \mathscr{F}_p(\varrho_t) 
      &= 
      \int_\Omega \nabla P_p(\varrho)\cdot (T - \id)\dd x
      - 
      \int_{\partial \Omega} 
      P_p(\varrho) (T-\id)\cdot n_\Omega \dd \mathscr{H}^{d-1}\\ 
      &\ge \int_\Omega \nabla P_p(\varrho)\cdot (T - \id)\dd x . 
    \end{align*}

    As in the notation of Lemma~\ref{lemma.divergence_computation}, we set $\Phi_{k+1} \eqdef \Phi[\varrho_{k+1}, \mathbf{x}_{k+1}, \mathbf{a}_{k+1}]$ and let $T_{k+1} = \id - \nabla \varphi_{k+1}$ denote the optimal transportation map from $\varrho_{k+1}$ to $\varrho_k$. The optimality conditions for the sequence obtained from the JKO scheme tells us that
    \[
      m\varrho_{k+1}^{m-1} = - \Phi_{k+1} - \frac{\varphi_{k+1}}{\tau} \text{ in } \{\varrho_{k+1}>0\}. 
    \]    
    As a result, we conclude that $\varrho_{k+1}^{m-1}$ is Lipschitz continuous and hence $\varrho_{k+1}$ is bounded. Hence for all $p>1$ the pressure $P_p(\varrho_{k+1}) \in L^1(\Omega)$ and
    \begin{align*}
        \nabla P_p(\varrho_{k+1}) 
        &=
        p\frac{m-1}{m}\varrho_{k+1}^{p-m+1}\nabla F'_m(\varrho_{k+1}) \\
        &= 
        -p\frac{m-1}{m}\varrho_{k+1}^{p-m+1}
        \left(
            \nabla \Phi_{k+1} + \frac{\nabla \varphi_{k+1}}{\tau}
        \right)
        \in L^{\infty}(\Omega). 
    \end{align*}

    It follows that $P_p(\Omega) \in W^{1,\infty}(\Omega)$ for all $p>1$, and the geodesic convexity of $\mathscr{F}_p$ gives us that
    \begin{align*}
      \frac{1}{p-1}\int_\Omega \varrho_k^p \dd x -
      \frac{1}{p-1}\int_\Omega \varrho_{k+1}^p \dd x 
      &\ge 
      \int_\Omega \nabla P_p(\varrho_{k+1})\cdot(T_{k+1} - \id)\dd x \\ 
      &= 
      p\int_\Omega 
      \varrho_{k+1}^{p-1} \nabla \varrho_{k+1}\cdot(-\nabla \varphi_{k+1})\dd x\\ 
      &= 
      \tau p\int_\Omega 
      \varrho_{k+1}^{p-2} \nabla \varrho_{k+1}
      \cdot
      \left(\nabla P_m(\varrho_{k+1}) + \varrho_{k+1} \nabla \Phi_{k+1}\right)\dd x
    \end{align*}
    And we obtain the key estimate 
    \begin{equation}\label{eq.key_estimate}
      \int_\Omega \varrho_k^p \dd x -
      \int_\Omega \varrho_{k+1}^p \dd x 
      \ge 
      \tau(p-1)p
      \left(
        m\int_\Omega 
        \varrho_{k+1}^{p+m-3} |\nabla \varrho_{k+1}|^2\dd x 
        + 
        \int_\Omega 
        \nabla{\left(\varrho_{k+1}\right)}^p\cdot \nabla \Phi_{k+1}\dd x
      \right)
    \end{equation}

    To obtain the $L^p$ estimates, we bound from below the first term in the RHS of~\eqref{eq.key_estimate} by $0$ and use integration by parts in the second term in order to obtain 
    \begin{align*}
      \int_\Omega \varrho_k^p \dd x -
      \int_\Omega \varrho_{k+1}^p \dd x 
      &\ge 
      \tau(p-1)
      \left(
        \int_\Omega 
        \varrho_{k+1}^p (-\Delta \Phi_{k+1})\dd x 
        + 
        \int_{\partial \Omega}
        \varrho_{k+1}^p  \nabla \Phi_{k+1}\cdot n_\Omega\dd \mathscr{H}^{d-1} 
      \right)\\
      &\ge 
      -\tau(p-1)d\int_{\Omega}\varrho_{k+1}^p\dd x,
    \end{align*}
    where we have used the fact that $-\Delta \Phi_{k+1}\ge -d$ from Lemma~\ref{lemma.divergence_computation} and the boundary terms are non-negative from the convexity of the domain as done many times in the proof of Lemma~\ref{lemma.divergence_computation}. This gives the desired estimate from item (1), and passing to the limit as $p\to \infty$, we obtain the $L^\infty$ bounds from item (2). 

    Moving on to the pressure gradient estimates, assuming that $\varrho_0 \in L^{m+1}(\Omega)$ and considering $p = m + 1$ in~\eqref{eq.key_estimate} we obtain
    \begin{align*} 
      \int_\Omega \varrho_k^{m+1} \dd x 
      &-
      \int_\Omega \varrho_{k+1}^{m+1} \dd x
      \ge 
      \tau m
      \left(
        m\int_\Omega 
        \varrho_{k+1}^{2m-2}|\nabla \varrho_{k+1}|^2\dd x
        + 
        \int_\Omega 
        \nabla (\varrho_{k+1}^{m+1})\cdot \nabla \Phi_{k+1}\dd x
      \right)\\ 
      &= 
      \tau
      \left(
        m^2\int_\Omega |\nabla P_m(\varrho_{k+1})|^2 \dd x 
        + 
        (m+1)m
        \int_\Omega
        \nabla P_m(\varrho_{k+1})\cdot \nabla \Phi_{k+1}\varrho_{k+1}\dd x 
      \right)
      \\ 
      &\ge 
      \tau 
      \left(
        \frac{m^2}{2}\int_\Omega 
        |\nabla P_m(\varrho_{k+1})|^2\dd x 
        -
        \frac{m^2}{2}
        \int_\Omega
        |\nabla \Phi_{k+1}|^2\varrho_{k+1}^2\dd x
      \right),
     \end{align*}
     where the last step comes from the $\varepsilon$-Young's inequality, with $\varepsilon = (m+1)/m$. Finally, summing both sides of the above inequality over $k$, the LHS telescopes and and recalling that $|\nabla \Phi_{k+1}| \le \diam(\Omega)$ and that $\varrho^\tau = \varrho_{k+1}$ over the interval $(k\tau,(k+1)\tau)$ we obtain
     \begin{align*}
      \int_0^T\int_\Omega |\nabla P(\varrho^\tau)|^2 \dd x\dd t
      &=
      \sum_{k} \tau\int_\Omega |\nabla P(\varrho_{k+1})|^2 \dd x\\ 
      &\le 
      \sum_{k} \tau\int_\Omega |\nabla \Phi_{k+1}|^2\varrho_{k+1}^2 \dd x 
      + 
      \frac{2}{m}
      \int_\Omega \varrho_0^{m+1}\dd x
      \le C_{m,\Omega,\varrho_0}.
     \end{align*}

     Finally, to get an $L^2H^1$ bound on $P_m(\varrho_{k+1})$, notice that since $\varrho_0 \in L^{m}(\Omega)$, from item (1), this remains true for all subsequent $\varrho_k$, for $\tau$ small enough. So the mean of $P_m(\varrho_{k+1})$ remains uniformly bounded and Poincaré-Wirtinger inequality gives 
     \begin{align*}
        \norm{P_m(\varrho_k)}_{L^2(\Omega)} 
        &\le 
        \norm{
          P_m(\varrho_k)
          - 
          \fint_\Omega P_m(\varrho_k)
        }_{L^2(\Omega)} 
        + 
        \fint_\Omega P_m(\varrho_k)\\
        &\le 
        C \norm{\nabla P_m(\varrho_k)}_{L^2(\Omega)}
        + 
        \fint_\Omega P_m(\varrho_k). 
     \end{align*}
     It follows that $P_m(\varrho^\tau)$ remains uniformly bounded in $\tau$ inside $L^2([0,T];H^1(\Omega))$.
\end{proof} 

Consider now a subsequence, not relabelled, of ${(\varrho^\tau, \mathbf{x}^\tau, \mathbf{a}^\tau)}_{\tau > 0}$ for which the convergence to the limiting equation~\eqref{eq.PDE_ODEsysthem} holds from Thm.~\ref{thm.convergenceJKO}. From the previous reasoning ${(\varrho^\tau)}_{\tau > 0}$ is bounded in $L^2([0,T];H^1(\Omega))$, so we conclude that it converges in the weak topology of this Hilbert space to $\varrho$. Indeed, since Thm.~\ref{thm.convergenceJKO} implies $L^1([0,T];\Omega)$ convergence of this fixed subsequence of ${(\varrho^\tau)}_{\tau > 0}$ to some solution, we can extract a further convergent subsequence in $L^2([0,T];H^1(\Omega))$. This limit must then be the same solution, otherwise we would have a contradiction with the $L^1$ convergence. From the Urysohn property of the weak $L^2([0,T];H^1(\Omega))$-convergence, it follows that the original sequence must also converge weakly in $L^2([0,T];H^1(\Omega))$.

Since it is a Hilbert space, to obtain strong convergence it suffices to prove convergence of the norms. This is done with the interpolation introduced in~\cite[Section~4]{santambrogio2024strong} given by 
\begin{equation}
  \varrho_t^{\tau, \varepsilon} 
  \eqdef 
  \begin{cases}
    \varrho^\tau_{k+1},
    & t \in (k\tau, k\tau + (1-\varepsilon)\tau],\\ 
    \varrho^\tau_{k+1}\frac{(k+1)\tau - t}{\varepsilon\tau} 
    + 
    \varrho^\tau_{k+2}\frac{t - (k+1)\tau + \varepsilon\tau}{\varepsilon\tau},
    & t \in (k\tau + (1-\varepsilon)\tau, (k+1)\tau],\\
    \varrho^\tau_{\left\lceil\frac{T}{\tau}\right\rceil}, 
    & t \in \left(\left\lceil\frac{T}{\tau}\right\rceil,T\right]. 
  \end{cases}
\end{equation} 
The advantage of this new family is that it is well adapted to apply a suitable version of the Aubin-Lions-Simon compactness theorem in $L^p([0,T];B)$, where $B$ is a Banach space, while still being uniformly close to $\varrho^\tau$ in the $L^2([0,T],H^1(\Omega))$ topology. 

Since the application of this compactness theorem depends only on the construction of the interpolation, we summarize in the following Lemma the convergence properties obtained in~\cite[Prop.~4.2 and Cor.~4.4]{santambrogio2024strong}. 

\begin{lemma}\label{lemma.L2L2_strongconv}
    Let ${\left(\varrho^\tau\right)}_{\tau > 0}$ be a subsequence of the staircase interpolation of the JKO scheme which is strongly convergent in $L^1([0,T]\times \Omega)$, as in Thm.~\ref{thm.convergenceJKO}, to $\varrho$. Then both ${\left(\varrho^\tau\right)}_{\tau > 0}$ and ${\left(P_m(\varrho^\tau)\right)}_{\tau > 0}$ converge to $\varrho$ and $P_m(\varrho)$, respectively, strongly in $L^2\left([0,T];L^2(\Omega)\right)$.
\end{lemma}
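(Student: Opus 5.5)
The plan is to combine the uniform $L^\infty$ bounds from item (1) of Thm.~\ref{thm.L2H1convergence} with the $L^1$ convergence from Prop.~\ref{prop.convPrho}. Assume, as in the rest of this section, that $\varrho_0\in L^\infty(\Omega)$, or at least $\varrho_0\in L^{m+1}(\Omega)$.

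\textbf{Case $\varrho_0\in L^\infty(\Omega)$.} Iterating item (1) gives $\|\varrho^\tau_k\|_{L^\infty}\le e^{d k\tau}\|\varrho_0\|_{L^\infty}\le e^{dT}\|\varrho_0\|_{L^\infty}=:M$ for all $k\tau\le T$. Then the convergence $\varrho^\tau\to\varrho$ in $L^1([0,T]\times\Omega)$ upgrades to $L^2$ immediately, since
\[
\int_0^T\!\!\int_\Omega|\varrho^\tau-\varrho|^2\le 2M\,\|\varrho^\tau-\varrho\|_{L^1([0,T]\times\Omega)}\to0 .
\]
For the pressure, $P_m(s)=s^m$ is Lipschitz on $[0,M]$ with constant $mM^{m-1}$. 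Hence $|P_m(\varrho^\tau)-P_m(\varrho)|\le mM^{m-1}|\varrho^\tau-\varrho|$, and the same estimate gives $L^2$ convergence of the pressures.

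\textbf{Case $\varrho_0\in L^{m+1}(\Omega)$ only.} Here I would instead argue via Vitali's theorem. Item (1) with $p=m+1$ gives $\sup_\tau\sup_t\|\varrho^\tau_t\|_{L^{m+1}}\le C$, so $\varrho^\tau$ is bounded in $L^{m+1}([0,T]\times\Omega)$. Since $m+1>2$, the family $|\varrho^\tau|^2$ is uniformly integrable. Together with the a.e.\ convergence from Prop.~\ref{prop.convPrho}, this yields $\varrho^\tau\to\varrho$ in $L^2$.

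For $P_m(\varrho^\tau)^2=(\varrho^\tau)^{2m}$, the time-uniform $L^{m+1}$ bound is not enough. I would use the $L^2H^1$ bound on $P_m(\varrho^\tau)$ from item (2) together with the Poincaré argument. By Sobolev embedding in $\Omega$, this gives $P_m(\varrho^\tau)$ bounded in $L^2([0,T];L^{2^*}(\Omega))$. Interpolating with the bound of $P_m(\varrho^\tau)$ in $L^\infty([0,T];L^{(m+1)/m}(\Omega))$ gives a bound in $L^{q}([0,T]\times\Omega)$ for some $q>2$, exactly as in the parabolic interpolation of \cite[Lemma~5.3]{carlier2017splitting} used in Prop.~\ref{prop.convPrho}. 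This yields uniform integrability of $|P_m(\varrho^\tau)|^2$. Since $P_m(\varrho^\tau)\to P_m(\varrho)$ a.e., Vitali's theorem concludes.

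\textbf{Main obstacle.} The main obstacle is this interpolation step in the unbounded case; I expect it to work for any $d$ because $2^*>2$ (any finite exponent when $d\le2$). Alternatively, one can follow \cite{santambrogio2024strong} and apply Aubin--Lions--Simon to the interpolation $\varrho^{\tau,\varepsilon}$. That route uses compactness of $H^1\hookrightarrow L^2$ and time-translation control in $W_2$, giving strong $L^2L^2$ compactness directly. The limit is identified through the $L^1$ convergence already known.
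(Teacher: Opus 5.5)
Your proof is correct, but it takes a different route from the paper. The paper gives no direct argument. It quotes \cite[Prop.~4.2 and Cor.~4.4]{santambrogio2024strong}, i.e.\ an Aubin--Lions--Simon compactness argument applied to the auxiliary interpolation $\varrho^{\tau,\varepsilon}$: spatial compactness comes from the $L^2([0,T];H^1(\Omega))$ bound, time regularity from the scheme, and one then compares $\varrho^{\tau,\varepsilon}$ with $\varrho^\tau$.

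You instead take the $L^1$ and a.e.\ convergence already established in Prop.~\ref{prop.convPrho} and upgrade it with Vitali's theorem. Everything then reduces to uniform integrability of $|\varrho^\tau|^2$ and $|P_m(\varrho^\tau)|^2$. The first follows from the $L^\infty_tL^{m+1}_x$ bound of item (1). The second follows from item (2), Poincar\'e--Wirtinger, Sobolev, and interpolation against $(\varrho^\tau)^m\in L^\infty_tL^{(m+1)/m}_x$. This does give a uniform bound in $L^q([0,T]\times\Omega)$ with $q=2+\frac{2(m+1)}{md}>2$ when $d\ge 3$, and some $q>2$ when $d\le 2$.

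Your route is shorter and self-contained. It needs no time-derivative estimate, the limit is identified for free, and the nonlinearity $P_m$ is handled directly. The cited results, by contrast, are formulated for linear diffusion, and applying them to $P_m(\varrho^\tau)$ requires an adaptation the paper does not spell out. The compactness route would give strong $L^2$ compactness without relying on the earlier $L^1$ convergence, but that independence is not needed here.

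Finally, your second case already covers $\varrho_0\in L^\infty(\Omega)$, so the first case can be dropped. Dropping it is preferable: the paper obtains the $L^\infty$ bound of item (1) by sending $p\to\infty$ in the factor $(1-\tau d(p-1))^{-1/p}$, which at fixed $\tau$ is only meaningful for $p<1+1/(\tau d)$.
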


With these elements we can finish the proof the strong $L^2\left([0,T];H^1(\Omega)\right)$ convergence, whose proof structure is strongly inspired by~\cite[Thm.~4.5]{santambrogio2024strong}.

\begin{proof}[Proof of Thm.~\ref{thm.L2H1convergence}]
  Let ${\left(\varrho, \mathbf{x}, \mathbf{a}\right)}_{t \in [0,T]}$ be a solution of the coupled system obtained as the limit of a staircase interpolation ${\left(\varrho^\tau, \mathbf{x}^\tau, \mathbf{a}^\tau\right)}_{\tau > 0}$. To facilitate notation, recall the definition of the potential $\Phi[\varrho, \mathbf{x},\mathbf{a}]$ introduced in~\eqref{eq.Kant_potential} and define
  \begin{align*}
     \Phi[\varrho] &\eqdef \Phi[\varrho,\mathbf{x},\mathbf{a}],\\
     \Phi^\tau &\eqdef \Phi[\varrho^\tau,\mathbf{x}^\tau,\mathbf{a}^\tau],\\ 
     \Phi_{k+1} &\eqdef \Phi[\varrho^\tau_{k+1},\mathbf{x}^\tau_{k+1},\mathbf{a}^\tau_{k+1}]. 
  \end{align*}
  
  In Thm~\ref{thm.convergenceJKO}, we show that $\varrho$ is a solution in a weak sense to the equation 
  \begin{equation}\label{eq.auxPME}
    \partial_t \varrho 
    = 
    \nabla P_m(\varrho) + \divv\left(\varrho \nabla \Phi[\varrho]\right), 
  \end{equation}
  with non-flux boundary conditions and such that $P_m(\varrho) \in L^1\left([0,T];\BV(\Omega)\right)$. In the first part of the proof of the current theorem, we actually show that $P_m(\varrho) \in L^2\left([0,T];H^1(\Omega)\right)$. As a result, we can use $P_m(\varrho)$ as a test function for~\eqref{eq.auxPME} giving that 
  \begin{align*}
    -\int_0^T\int_\Omega &\left(
      |\nabla P_m(\varrho)|^2 + 
      \varrho \nabla P_m(\varrho)\cdot \nabla \Phi[\varrho]
    \right)\dd x \dd t
    = 
    \int_0^T\int_\Omega P_m(\varrho) \partial_t \varrho \dd x \dd t\\ 
    &=
    \int_0^T \frac{\dd}{\dd t} 
    \left(
      \frac{1}{m+1}\int_\Omega \varrho_t^{m+1}\dd x
    \right)\dd t
    = 
    \frac{1}{m+1}\int_\Omega \varrho_T^{m+1}\dd x 
    - 
    \frac{1}{m+1}\int_\Omega \varrho_0^{m+1}\dd x.
  \end{align*}
  In other words we have the energy relation
  \begin{equation}\label{eq.energy_relation}
    \int_\Omega \varrho_T^{m+1}\dd x 
    - 
    \int_\Omega \varrho_0^{m+1}\dd x
    = 
    -(m+1)\int_0^T\int_\Omega \left(
      |\nabla P_m(\varrho)|^2 + 
      \varrho \nabla P_m(\varrho)\cdot \nabla \Phi[\varrho]
    \right)\dd x \dd t
  \end{equation}

  On the other hand, using the case $p = m+1$ in the proof of the first part of the present theorem, for instance in equation~\eqref{eq.key_estimate}, it holds for the discrete iterates of the JKO scheme that 
  \begin{align*}
      \int_\Omega \left(\varrho_k^\tau\right)^{m+1} \dd x
      &-
      \int_\Omega \left(\varrho_{k+1}^\tau\right)^{m+1} \dd x \\
      & \ge 
      \tau
      \left(
        \int_\Omega 
        |\nabla P_m\left(\varrho_{k+1}^\tau\right)|^2\dd x 
        + 
        \int_\Omega 
        \varrho_{k+1}^\tau
        \nabla P_m\left(\varrho_{k+1}^\tau\right)\cdot \nabla \Phi_{k+1}\dd x
      \right).
  \end{align*}
  Hence, summing over $k$ the LHS telescopes, and we obtain that
  \begin{equation}\label{eq.discrete_energy_relation}
    \int_\Omega \left(\varrho_0^\tau\right)^{m+1} \dd x -
    \int_\Omega \left(\varrho_{T}^\tau\right)^{m+1} \dd x
    \ge 
    \int_0^T\int_\Omega 
    \left(
      |\nabla P_m\left(\varrho^\tau\right)|^2 
        + 
        \varrho^\tau
        \nabla P_m\left(\varrho^\tau\right)\cdot \nabla \Phi^\tau
    \right)
    \dd x \dd t. 
  \end{equation}

  From the pointwise convergence of $\varrho^\tau_t$ to $\varrho_t$ in Wasserstein topology, for every time $t$, and the weak convergence of $P_m(\varrho^\tau)$ we obtain that 
  \begin{equation}\label{eq.semi_cont_conv}
    \begin{aligned}
      \int_\Omega {\left(\varrho_{T}\right)}^{m+1} \dd x
    &\le 
    \liminf_{\tau \to 0^+}
    \int_\Omega {\left(\varrho_{T}^\tau\right)}^{m+1} \dd x, \\ 
    \int_0^T\int_\Omega 
      |\nabla P_m\left(\varrho\right)|^2 
    \dd x \dd t
    &\le 
    \liminf_{\tau \to 0^+}
    \int_0^T\int_\Omega 
      |\nabla P_m\left(\varrho^\tau\right)|^2 
    \dd x \dd t
    \end{aligned}
  \end{equation}
  We can rewrite the following term as 
  \begin{align*}
    \lim_{\tau \to 0}
    \int_0^T\int_\Omega 
        \varrho^\tau
        \nabla P_m\left(\varrho^\tau\right)\cdot \nabla \Phi^\tau
    \dd x \dd t
    &= 
    \sum_{i = 1}^N 
    \lim_{\tau \to 0}
    \int_0^T\int_\Omega 
    \mathds{1}_{\Omega^\tau_i(t)}
    \varrho^\tau
        \nabla P_m\left(\varrho^\tau\right)\cdot(x - x_i^\tau)
    \dd x \dd t\\ 
    &= 
    \sum_{i = 1}^N 
    \int_0^T\int_\Omega 
    \mathds{1}_{\Omega_i(t)}
    \varrho
        \nabla P_m\left(\varrho\right)\cdot(x - x_i)
    \dd x \dd t\\ 
    &= 
    \int_0^T\int_\Omega 
        \varrho
        \nabla P_m\left(\varrho\right)\cdot \nabla \Phi[\varrho]
    \dd x \dd t,
  \end{align*}
  where the limit is computed with weak-strong convergence, since $\mathds{1}_{\Omega_i^\tau} \cvstrong{\tau \to 0}{L^1([0,T]\times \Omega)} \mathds{1}_{\Omega_i}$ for all $i = 1,\dots, N$, so that $\varrho^\tau \mathds{1}_{\Omega_i^\tau}$ converges strongly in $L^2\left([0,T];L^2(\Omega)\right)$ and $\nabla P_m\left(\varrho^\tau\right)\cdot(x - x_i^\tau)$ converges weakly in the same space. 

  As a result, going back to~\eqref{eq.discrete_energy_relation}, using the previous convergence,~\eqref{eq.semi_cont_conv} and the energy relation~\eqref{eq.energy_relation}, we obtain
  \begin{align*}
    \int_0^T\!\!\!\!\int_\Omega 
      &|\nabla P_m\left(\varrho\right)|^2 
    \dd x \dd t
    \!\le \!
    \liminf_{\tau \to 0^+} \!
    \int_0^T\!\!\!\!\int_\Omega 
      |\nabla P_m\left(\varrho^\tau\right)|^2 
    \dd x \dd t
    \!\le \!
    \limsup_{\tau \to 0^+}\!
    \int_0^T\!\!\!\!\int_\Omega 
      |\nabla P_m\left(\varrho^\tau\right)|^2 
    \dd x \dd t\\ 
    &\le 
    \limsup_{\tau \to 0^+} 
    \left(
      \int_\Omega {\left(\varrho_0^\tau\right)}^{m+1} \dd x -
      \int_\Omega {\left(\varrho_{T}^\tau\right)}^{m+1} \dd x
      -
      \int_0^T\int_\Omega 
          \varrho^\tau
          \nabla P_m\left(\varrho^\tau\right)\cdot \nabla \Phi^\tau
      \dd x \dd t
    \right)\\
    &\le 
    \int_\Omega {\left(\varrho_0\right)}^{m+1} \dd x -
      \int_\Omega {\left(\varrho_{T}\right)}^{m+1} \dd x
      -
      \int_0^T\int_\Omega 
          \varrho
          \nabla P_m\left(\varrho\right)\cdot \nabla \Phi[\varrho]
      \dd x \dd t\\ 
      &= 
      \int_0^T\int_\Omega 
      |\nabla P_m\left(\varrho\right)|^2 
      \dd x \dd t.
  \end{align*}

  From Lemma~\ref{lemma.L2L2_strongconv} we already had the strong convergence of $P_m(\varrho^\tau)$ to $P_m(\varrho)$ in $L^2\left([0,T];L^2(\Omega)\right)$, therefore the previous computation implies convergence of the norms of $P_m(\varrho^\tau)$ in $L^2\left([0,T]; H^1(\Omega)\right)$, which combined with the weak convergence gives the desired result. 
\end{proof}

\section{Qualitative Properties}\label{sec.qualitative_props}
In this section we investigate the qualitative behavior of a simplified PDE obtained as the limit equation of the minimizing movement scheme introduced earlier, but without the boundary effects encoded by the normal cone. All assumptions on the domain and data are those stated in the introduction. The resulting coupled PDE–ODE system reads
\begin{equation}\label{eq.PDE_ODEsysthem_smoother}
  \begin{cases}
      &\displaystyle
      \partial_t \varrho_t 
      = \Delta P(\varrho_t)
      + \ddiv\left(\varrho_t \left(\sum_{i = 1}^N(x - x_i(t))\mathbbm{1}_{\Omega_i(t)}\right)\right)\\
      &\displaystyle
      \dot{x}_{i}(t) 
      =
      - a_i(t)x_i(t) + \int_{\Omega_i(t)}x \dd \varrho_t \\ 
      &\displaystyle 
      \dot{a}_i(t) = \left(-g'(a_i(t)) + \psi_i(t)\right)\mathds{1}_{\{a_i>0\}}
      \\ 
      &\displaystyle 
      \Omega_i(t) = \Lag_i(\psi_t, \mathbf{x}_t), \ \psi(t) \text{ is a potential for } W_2^2(\varrho_t, \mu_t).
  \end{cases}
\end{equation}

We first prove that for any solution of~\eqref{eq.PDE_ODEsysthem_smoother}, not only those obtained from the minimizing movement scheme, the atoms remain strictly inside the domain. Moreover, when the boundary $\partial \Omega$ is smooth, any atom initially on the boundary is instantaneously pushed into the interior (see Theorem~\ref{thm.qualitative}). We then focus on the uniform quantization case $a_i \equiv 1/N$, where we show that the atoms stay uniformly separated and that the distance between any point and its Laguerre cell's barycenter converges to zero.

\subsection{Invariant properties of the atomic dynamics}\label{sec.qualitative_atoms}
In this subsection we establish several qualitative properties of the coupled system~\eqref{eq.PDE_ODEsysthem}. The main result is the following theorem.

\begin{theorem}\label{thm.qualitative}
Let $\Omega$ be an open, bounded, convex set, and let ${\left(\varrho_t, \mathbf{x}_t, \mathbf{a}_t\right)}_{t \in [0,T]}$ be a solution of~\eqref{eq.PDE_ODEsysthem}. Then the following properties hold:
\begin{enumerate}
\item The family ${\left(\varrho_t\right)}_{t \ge 0}$ is uniformly integrable.
\item If for some $\bar t \ge 0$ we have $a_i(\bar t) = 0$, then $a_i(t) = 0$ for all $t \ge \bar t$.
\item If $\partial \Omega$ is of class $\mathscr{C}^1$ and $x_i(0) \in \partial \Omega$, then $x_i(t)$ is instantaneously pushed towards the interior, that is $x_i(t) \in \operatorname{int}\Omega$ for all $t$ in a sufficiently small neighborhood of $0$.
\item Suppose that either $\partial \Omega$ is of class $\mathscr{C}^1$, or ${\left(x_i(0)\right)}_{i = 1}^N \subset \operatorname{int}\Omega$. Then, for any $t>0$, if $x_i(t) \in \partial \Omega_i(t)$ it follows that $a_i(t) = 0$.
\end{enumerate}
\end{theorem}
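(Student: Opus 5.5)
I would prove the four items in order, each by adapting a discrete-time argument from Section~\ref{sec.convergenceMMS} to the continuous dynamics of~\eqref{eq.PDE_ODEsysthem}.

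\textbf{Item (1).} I would use the energy dissipation along the flow. Taking $\frac{\dd}{\dd t}$ of $\mathscr{E}(\varrho_t,\mu_t)$ and substituting the equations of~\eqref{eq.PDE_ODEsysthem}, formally via the chain rule, shows that it is nonincreasing. This gives
\[
\sup_{t}\mathscr{F}(\varrho_t)\le \mathscr{E}(\varrho_0,\mu_0)+C_\Omega .
\]
Because $F$ is superlinear, de la Vallée Poussin's criterion then yields uniform integrability. For solutions obtained from the scheme, the bound follows directly from~\eqref{eq.a_priori_energy} together with lower semicontinuity of $\mathscr{F}$.

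\textbf{Item (2).} I would argue by contradiction, as a continuous analogue of Proposition~\ref{prop.estimate_ai}. Suppose $a_i(\bar t)=0$ but $a_i>0$ somewhere after $\bar t$. Take an interval $(s_0,s_1)$ with $a_i(s_0)=0$ and $a_i>0$ on it, short enough that the atoms not in $\Lambda=\{j: a_j(s_0)=0\}$ keep mass at least $\delta>0$.

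On $(s_0,s_1)$ the conservation $\sum_j\dot a_j=0$ fixes the constant in $\psi$. Writing $\psi=\bar\psi+c_t$ with $|\bar\psi|\le C_\Omega$, the constant $c_t$ is the average of $g'(a_j)-\bar\psi_j$ over the positive atoms. Summing over the small atoms $\Lambda$ gives
\[
\frac{\dd}{\dd t}\sum_{j\in\Lambda}a_j
\;\le\; C-\Bigl(1-\tfrac1{|I|}\Bigr)\sum_{j\in\Lambda}g'(a_j)+\tfrac1{|I|}\sum_{j\notin\Lambda}g'(a_j).
\]
The cusp of $g$ forces $g'(a)\to+\infty$ as $a\to0^+$. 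Hence the right-hand side is negative once $\sum_\Lambda a_j$ is small. A nonnegative absolutely continuous function starting at $0$ with negative derivative whenever it is small must stay at $0$, which gives the contradiction.

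\textbf{Items (3) and (4).} For both I would use the identity $\dot x_i=a_i(b_i-x_i)+n_i$ with $-n_i\in N_\Omega(x_i)$.

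\emph{Item (3).} If $x_i(0)\in\partial\Omega$ with $a_i(0)>0$, the cell $\Omega_i(0)$ carries mass $a_i$. By item (1) and the argument of Proposition~\ref{prop.atoms_interior}, that mass cannot concentrate near the boundary. So the barycenter satisfies $\operatorname{dist}(b_i,\partial\Omega)\ge\eta>0$, and therefore $(b_i-x_i)\cdot\nu(x_i)\le-\eta$, where $\nu$ is the outer normal. Since $\partial\Omega$ is $\mathscr{C}^1$, this inward velocity persists for small $t$ by continuity of the cells (Lemma~\ref{lemma.cont_laguerre}). The normal cone term then vanishes and $x_i(t)$ enters $\operatorname{int}\Omega$.

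\emph{Item (4).} I read $\partial\Omega_i(t)$ as $\partial\Omega$. Let $t^*$ be the first time at which $x_i$ reaches $\partial\Omega$ while $a_i(t^*)>0$. Let $h(t)=\operatorname{dist}(x_i(t),\partial\Omega)$, which is concave and Lipschitz along the trajectory. The uniform barycenter-depth estimate above gives $\dot h\ge \eta a_i-Ch$ just before $t^*$. This keeps $h$ positive and contradicts $h(t^*)=0$.

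The main obstacle is the uniform depth bound $\operatorname{dist}(b_i,\partial\Omega)\ge\eta$. It requires combining the uniform integrability from item (1) with a lower bound on $a_i$. Convexity of $\Omega$ should then turn the mass of $\Omega_i$ lying away from the boundary into an inward-pointing barycenter; I would first try a quantitative version of the projection estimate in Proposition~\ref{prop.atoms_interior}.
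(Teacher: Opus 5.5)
Your proposal is correct. Items (1) and (4) rest on the same two ingredients as the paper. The energy bound gives uniform integrability, and uniform integrability plus a lower bound on $a_i$ keeps the barycenter of the $i$-th cell at a uniform depth inside $\Omega$. The differences lie in how this is used.

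\textbf{Item (4).} The paper freezes $(\varrho_t,\mathbf{a}_t)$ and builds a smooth convex set $\mathscr{D}$ with $\Omega_{\bar\delta/2}\subseteq\mathscr{D}\subseteq\Omega$. It then applies a Nagumo-type inward-pointing criterion to the frozen field $a_i(t)(\beta_i(t,y)-y)$. You instead run the concave barrier $h=\dist(\cdot,\partial\Omega)$ along the actual trajectory. This needs neither a smoothing of $\partial\Omega$ nor an invariance lemma.

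\textbf{Item (3).} The paper assumes by contradiction that $x_i$ stays on $\partial\Omega$ over a whole interval $(0,\tau)$. Your strict-inward-velocity argument, closed with the same barrier started from $h(0)=0$, gives $h>0$ on some $(0,\epsilon)$. It therefore also rules out touching $\partial\Omega$ along a sequence $t_n\to 0$, which the paper's contradiction hypothesis does not address.

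\textbf{Item (2).} The paper gives no continuous-time argument: the property is built into the indicator in~\eqref{eq.PDE_ODEsysthem} and, for scheme limits, comes from Proposition~\ref{prop.estimate_ai}. Your continuous analogue of that proposition makes it a property of the ODE itself.

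\textbf{Points to tighten.}

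The depth bound you single out as the main obstacle is a one-line Jensen argument. Since $h$ is concave on the convex set $\Omega$, you get $h(b_i)\ge\frac{1}{a_i}\int_{\Omega_i}h\,\dd\varrho_t\ge\delta\,(1-\varepsilon/a_i)$, where $\varepsilon=\sup_t\varrho_t(\Omega\setminus\Omega_\delta)$ is small for small $\delta$ by item (1). Hence $h(b_i)\ge\delta/2=:\eta$ once $\varepsilon\le a_i/2$. The same concavity gives $\frac{\dd}{\dd t}h(x_i(t))\ge a_i\,(h(b_i)-h(x_i))$ whenever the normal-cone term is inactive.

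This bound is uniform in $t$ as long as $a_i(t)\ge a_i(0)/2$. Moreover, $(b_i-x)\cdot\nu\le-\eta$ for every unit $\nu\in N_\Omega(x)$ at any $x\in\partial\Omega$. So the normal-cone term is inactive for a.e.\ $t$. In item (3) you therefore need neither time-continuity of the cells nor, at that step, the $\mathscr{C}^1$ regularity of $\partial\Omega$. Time-continuity of the cells would require continuity of $t\mapsto\psi_t$, which is not established in general, so it is better avoided.

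In item (2) you use $g'(a)\to+\infty$ as $a\to0^+$. This does not follow from $g(a)/a\to+\infty$ alone. The paper makes the same tacit assumption in Proposition~\ref{prop.estimate_ai}, so state it explicitly; concavity of $g$ near $0$ suffices.

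Finally, the coefficients in your summed inequality should be $|\Lambda\cap I|/|I|$ rather than $1/|I|$. This is harmless: some positive atom lies outside $\Lambda$, so $1-|\Lambda\cap I|/|I|\ge 1/N$.
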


\begin{figure}[ht]\label{figure.invariant_domain}
\centering
\tikzset{every picture/.style={line width=0.75pt}} 

\begin{tikzpicture}[x=0.75pt,y=0.75pt,yscale=-1,xscale=1]

\draw   (458.16,131.32) .. controls (480.83,198.54) and (431.46,271.47) .. (347.88,294.21) .. controls (264.3,316.95) and (178.16,280.89) .. (155.49,213.67) .. controls (132.81,146.44) and (182.19,73.51) .. (265.77,50.77) .. controls (366.13,23.46) and (416.39,10.03) .. (416.53,10.44) .. controls (417.05,10.3) and (430.93,50.59) .. (458.16,131.32) -- cycle ;
\draw  [dash pattern={on 4.5pt off 4.5pt}] (423.19,146.29) .. controls (423.19,146.29) and (423.19,146.29) .. (423.19,146.29) .. controls (440.56,196.89) and (402.2,251.91) .. (337.51,269.18) .. controls (272.82,286.46) and (206.29,259.44) .. (188.92,208.84) .. controls (171.54,158.24) and (209.9,103.21) .. (274.59,85.94) .. controls (274.59,85.94) and (274.59,85.94) .. (274.59,85.94) .. controls (352.28,65.2) and (391.17,54.98) .. (391.28,55.3) .. controls (391.69,55.2) and (402.32,85.52) .. (423.19,146.29) -- cycle ;
\draw  [color={rgb, 255:red, 208; green, 2; blue, 27 }  ,draw opacity=1 ][line width=1.5]  (175.1,186.2) .. controls (174.1,163.2) and (189.1,133.2) .. (204.1,117.2) .. controls (219.1,101.2) and (257.43,85.33) .. (279.1,76.6) .. controls (300.77,67.87) and (334.1,59.6) .. (348.1,56.6) .. controls (362.1,53.6) and (388.1,46.6) .. (394.1,50.6) .. controls (400.1,54.6) and (402.1,57.6) .. (406.1,68.6) .. controls (410.1,79.6) and (412.1,84.6) .. (414.1,92.6) .. controls (416.1,100.6) and (428.1,127.6) .. (432.1,168.6) .. controls (436.1,209.6) and (403.1,244.2) .. (388.1,252.2) .. controls (373.1,260.2) and (369.1,269.6) .. (320.1,279.6) .. controls (271.1,289.6) and (241.1,277.2) .. (218.1,262.2) .. controls (195.1,247.2) and (193.1,243.2) .. (185.1,230.2) .. controls (177.1,217.2) and (176.1,209.2) .. (175.1,186.2) -- cycle ;
\draw  [fill={rgb, 255:red, 0; green, 0; blue, 0 }  ,fill opacity=1 ] (273.07,114.89) .. controls (273.07,113.77) and (273.89,112.86) .. (274.91,112.86) .. controls (275.92,112.86) and (276.74,113.77) .. (276.74,114.89) .. controls (276.74,116) and (275.92,116.91) .. (274.91,116.91) .. controls (273.89,116.91) and (273.07,116) .. (273.07,114.89) -- cycle ;
\draw  [fill={rgb, 255:red, 0; green, 0; blue, 0 }  ,fill opacity=1 ] (287.03,72.88) .. controls (287.03,71.77) and (287.85,70.86) .. (288.87,70.86) .. controls (289.88,70.86) and (290.7,71.77) .. (290.7,72.88) .. controls (290.7,74) and (289.88,74.91) .. (288.87,74.91) .. controls (287.85,74.91) and (287.03,74) .. (287.03,72.88) -- cycle ;
\draw    (288.87,70.86) -- (286.38,60.57) -- (284.55,53.72) ;
\draw [shift={(284.03,51.79)}, rotate = 75] [color={rgb, 255:red, 0; green, 0; blue, 0 }  ][line width=0.75]    (10.93,-3.29) .. controls (6.95,-1.4) and (3.31,-0.3) .. (0,0) .. controls (3.31,0.3) and (6.95,1.4) .. (10.93,3.29)   ;
\draw    (288.87,72.88) -- (275.51,115) ;
\draw [shift={(274.91,116.91)}, rotate = 287.59] [color={rgb, 255:red, 0; green, 0; blue, 0 }  ][line width=0.75]    (10.93,-3.29) .. controls (6.95,-1.4) and (3.31,-0.3) .. (0,0) .. controls (3.31,0.3) and (6.95,1.4) .. (10.93,3.29)   ;
\draw  [dash pattern={on 0.84pt off 2.51pt}]  (217.64,79.15) -- (274.7,162.76) ;
\draw  [dash pattern={on 0.84pt off 2.51pt}]  (274.7,162.76) -- (330.55,150.24) ;
\draw  [dash pattern={on 0.84pt off 2.51pt}]  (330.55,150.24) -- (414,14.61) ;
\draw  [dash pattern={on 4.5pt off 4.5pt}] (442.23,140.14) .. controls (462.44,198.64) and (417.8,262.26) .. (342.51,282.24) .. controls (267.22,302.21) and (189.8,270.98) .. (169.59,212.48) .. controls (149.38,153.98) and (194.03,90.36) .. (269.31,70.39) .. controls (359.73,46.4) and (404.99,34.59) .. (405.12,34.96) .. controls (405.58,34.83) and (417.96,69.89) .. (442.23,140.14) -- cycle ;

\draw (389.09,115.98) node [anchor=north west][inner sep=0.75pt]  [font=\footnotesize]  {$\Omega _{\delta }$};
\draw (250.93,33.29) node [anchor=north west][inner sep=0.75pt]  [font=\footnotesize]  {$n_{D}( y)$};
\draw (294.66,81.93) node [anchor=north west][inner sep=0.75pt]  [font=\footnotesize]  {$y\in \partial D$};
\draw (276.03,117.92) node [anchor=north west][inner sep=0.75pt]  [font=\footnotesize]  {$\beta _{i}( t,y)$};
\draw (341.89,136.13) node [anchor=north west][inner sep=0.75pt]  [font=\footnotesize]  {$\Omega _{i}( t)$};
\draw (410.1,171) node [anchor=north west][inner sep=0.75pt]  [font=\footnotesize,color={rgb, 255:red, 208; green, 2; blue, 27 }  ,opacity=1 ]  {$\mathscr{D}$};
\draw (436.47,113.53) node [anchor=north west][inner sep=0.75pt]  [font=\footnotesize]  {$\Omega _{\frac{\delta }{2}}$};

\end{tikzpicture}
\caption{Construction of the invariant region $\mathscr{D}$. The uniform integrability of $\varrho_t$ ensures that the barycenter $b_i(t,y)$ remains inside $\Omega_\delta$, allowing the construction of a smooth and convex invariant set $\mathscr{D}$ between the boundaries of $\Omega_\delta$ and $\Omega_{\delta/2}$.}
\end{figure}
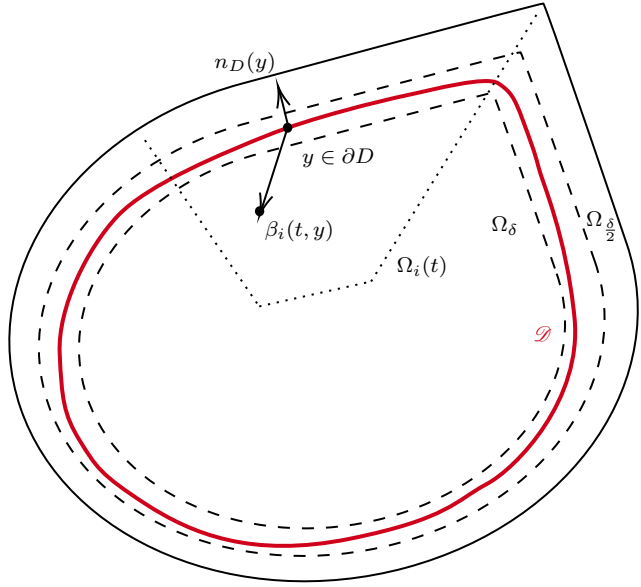

The proof relies on a dynamical systems approach: we construct a positively invariant domain for the dynamics of the atomic positions ${\left(x_i\right)}_{i = 1}^N$, while keeping ${\left(\varrho_t, a_i(t)\right)}$ fixed. In this context, a positively invariant domain is a region that cannot be left by the trajectories generated by the system.

\begin{definition}\label{def.positively_invariant_set}
Let $V: \mathbb{R}+ \times \mathbb{R}^d \to \mathbb{R}^d$ be a vector field, measurable in time and continuous in space. A set $A \subset \mathbb{R}^d$ is said to be positively invariant for the dynamics induced by $V$ if every solution ${\left(x(t)\right)}{t \ge 0}$ of the ODE $\dot{x}_t = V(t,x_t)$ with $x(0) \in A$ satisfies $x(t) \in A$ for all $t \ge 0$.
\end{definition}

The theory of positively invariant sets originates from the work of Nagumo~\cite{nagumo1942lage} (see also~\cite[Chap.~4]{blanchini2008set}). Here we exploit the convexity of $\Omega$ together with the uniform integrability of ${\left(\varrho_t\right)}_{t \ge 0}$, inherited from the energy functional $\mathscr{F}$, to apply a simpler classical criterion. In particular, if $A$ has a smooth boundary, a sufficient condition for $A$ to be invariant under the flow induced by $V$ is that $\inner{V(t,y), n_A(y)} < 0$ for all $y \in \partial A$, where $n_A(y)$ denotes the outwards unity normal of $\partial A$ evaluated at $y$, see for instance~\cite[Lemma~1]{fernandes1987remarks}.

\begin{proof}
    \textbf{\underline{Item (1)}:} The uniform integrability of ${\left(\varrho_t\right)}_{t \in [0,T]}$ follows from De la Vallée Poussin's Theorem and the gradient flow structure of the equation on $\varrho_t$. Indeed, using the gradient flow structure of the equation satisfied by $\varrho_t$ we have 
    \begin{align*}
      \frac{\dd}{\dd t} \mathscr{E}(\varrho_t, \mu_t) \le 0, 
      \text{ so that }
      \mathscr{F}(\varrho_t) \le \mathscr{E}(\varrho_t, \mu_t) \le \mathscr{E}(\varrho_0, \mu_0). 
    \end{align*}

    \textbf{\underline{Item (3)}:} In the case that $\partial \Omega$ is $\mathscr{C}^1$, the normal cone at each point of the boundary is a subspace of dimension $1$, that is given by $N_\Omega(x) = n_\Omega(x)\mathbb{R}$, whereas the tangent space to $\partial \Omega$ at $x$, is given by the orthogonal subspace of this vector $T_x\partial \Omega = \{n_\Omega(x)\}^\perp$, but should not be mistaken with Bouligand's tangent cone~\eqref{eq.tangent_cone} $T_\Omega(x)$. To perform a proof by contradiction, assume that there is an interval $(0,\tau)$ for which $x_i(t) \in \partial \Omega$, then it follows that $\dot{x}_i(0) \in T_{x_i(0)}\partial \Omega$, and we have that $\dot{x}_i(0) \cdot n_\Omega(x_i(0)) = 0$. 

    Define the family of convex sets 
    \[
        \Omega^\delta \eqdef \left\{
            x \in \Omega : \dist(x, \partial \Omega) \ge \delta
        \right\},
    \]
    so that $|\Omega \setminus \Omega^\delta| \le C \delta$. We conclude from the uniform integrability of ${\left(\varrho_t\right)}_{t \in [0,T]}$ that, the baricenter 
    \[
        b_i(t) \eqdef 
        \frac{1}{a_i(t)} \int_{\Omega_i(t)} x \dd \varrho_t
        \in \intt \Omega_\delta,
    \]
    for all $\delta$ sufficiently small. 

    To finish our construction that will lead to a contradiction, notice that $\dot{x}_i(0)$ can be written as
    \[
        \dot{x}_i(0) = a_i(0)(b_i - x_i(0)) + n_i, \text{ where } n_i \text{ is parallel to } n_\Omega(x_i(0)). 
    \] 
    Let $x_\delta$ denote the projection of $x_i(0)$ onto the convex set $\Omega^\delta$, so that $x_i(0) - x_\delta = \delta \frac{n_i}{\norm{n_i}}$. As a result, the fact that $\dot{x}_i(0)\cdot n_i = 0$ implies, for all $\delta>0$ small enough so that $b_i \in \Omega^\delta$, that 
    \begin{align*}
        0 &= \norm{n_i}^2 + a_i(0)\inner{b_i - x_\delta, n_i} + a_i(0)\inner{x_\delta - x_i(0), n_i}\\ 
          &=  \norm{n_i}^2 + a_i(0)\inner{b_i - x_\delta, n_i} - a_i(0)\norm{n_i}\delta,\\ 
          &=  \norm{n_i}^2 + \frac{a_i(0)}{2}\left(\delta^2 + \norm{b_i - x_\delta}^2 - \norm{x_i(0) - b_i}^2\right) - a_i(0)\norm{n_i}\delta,
    \end{align*}
    where the last equality was obtained via the cosine law for the triangle formed by the points $b_i,x_i(0),x_\delta$. But notice that this equality cannot be true for infinitely many values of $\delta$, which leads to a contradiction with the hypothesis that $x_i(t) \in \partial \Omega$ for all $t \in (0,\tau)$, and we conclude that $x_i$ enters the interior instantaneously.

    \textbf{\underline{Item (4)}:} 
    Assume by contradiction that $\bar t > 0$ is the first time that $x_i$ reaches the boundary $\partial \Omega$ and is such that $a_i(\bar t) > 0$. Our approach is to construct an invariant region for the dynamics satisfied by $x_i$. Fixing the trajectories of ${\left(\varrho_t, \mathbf{a}_t\right)}_{t \in [0,T]}$, notice that for each $i = 1,\dots,N$ the ODE describing the evolution of $t \mapsto x_i(t)$ is given by 
  \[
    \dot{x}_i(t) = V_i(t,x_i(t)), 
    \text{ where }
    V_i(t,y) \eqdef 
    -\nabla_y W_2^2\left(
      \varrho_t, a_i(t)\delta_y + \sum_{j \neq i} a_j(t) \delta_{x_j(t)}
    \right).  
  \]
  Therefore, we have that
  \[
    V_i(t,y)
    = 
    a_i(t)(
      \beta_i(t,y) - y
    )
    \text{ where }
    \beta_i(t,y) \eqdef \frac{1}{a_i(t)}\int_{\Gamma_i(t,y)} x \dd \varrho_t
  \]
  and $\Gamma_i(t,y)$ corresponds to the $i$-th Laguerre cell of the tessellation induced by the semi-discrete transport of $\varrho_t$ to $ \displaystyle a_i(t)\delta_y + \sum_{j \neq i} a_j(t) \delta_{x_j(t)}$. As a result, $V_i$ is clearly measurable in time and the $\mathscr{C}^0$ regularity in space comes from the differentiability properties of the semi-discrete transport term~\cite[Thm~1 and Prop.~2]{de2019differentiation}.


   
  To construct an invariant region, set $\delta \eqdef \min_{i = 1,\dots, N} \dist(x_i(0), \partial \Omega) > 0$ and define 
  \[
    \Omega_s \eqdef \left\{
      x \in \Omega : \dist(x, \partial \Omega) \ge s
    \right\}.
  \]
  Hence, from the uniform integrability and the convexity of $\Omega$, for any $\varepsilon> 0$ there is some $\delta < \bar \delta$ and a compact, convex set with smooth boundary $\mathscr{D}_{\bar \delta}$ such that 
  \[
    \Omega_{\bar \delta/2} \subseteq \mathscr{D}_{\bar \delta} \subseteq \Omega, 
    \text{ with }
    \varrho_t(\Omega \setminus \Omega_{\bar \delta}) \le \varepsilon 
    \text{ for all } t\in[0,T]. 
  \]

  Set $\bar t \eqdef \min\left\{ t\ge 0 : x_i(t) \in \partial \Omega \right\}$ and suppose by contradiction that $\bar a_i \eqdef a_i(\bar t) > 0$. Performing the previous construction with $\varepsilon \ll \frac{\bar a_i}{4}$ it follows that
  \[
    \beta_i(t,y) \in \Omega_{\bar \delta} \text{ for all $t \in [0,T]$ and all $y \in \Omega$}.
  \]
  As a result, since $\mathscr{D}_{\bar \delta}$ has smooth boundary and is convex, for any $y \in \partial \mathscr{D}_{\bar \delta}$ we get that
  \[
    n_{\mathscr{D}_{\bar \delta}}(y)\cdot a_i(t)\left(
        \beta_i(t) - y
    \right) < 0,
  \]
  meaning that $\mathscr{D}_{\bar \delta}$ is invariant for the flow associated with the dynamics of $t \mapsto x_i(t)$. This contradicts the definition of $\bar t$, so that $x_i(\bar t) \in \partial \Omega$ can only happen if $a_i(\bar t) = 0$. 
\end{proof}

\subsection{On the dynamics of the optimal quantization equation}\label{sec.quantization}

In this paragraph we focus on a simplified form of the coupled system by fixing the weights \(a_i = \frac{1}{N}\) for all \(i = 1, \ldots, N\). In this case, equation~\eqref{eq.PDE_ODEsysthem_smoother} becomes the gradient flow of the energy 
\[
    \mathscr{E}(\varrho, \mathbf{x}) 
    \eqdef 
    \mathscr{H}(\varrho) 
    + 
    F_N(\varrho, \mathbf{x}), 
    \text{ where } 
    F_N(\varrho, \mathbf{x}) 
    \eqdef 
    \frac{1}{2} W_2^2\left(\varrho, \frac{1}{N}\sum_{i = 1}^N \delta_{x_i}\right),
\] 
which corresponds to the uniform quantization problem. This way the equation~\eqref{eq.PDE_ODEsysthem_smoother} assumes the form
\begin{equation}\label{eq:longtime}
    \begin{cases}
        \partial_t \varrho = \Delta \varrho + \divv
        \left(\varrho \nabla \Phi_{t}[\varrho_N, \mathbf{x}_t]\right) , & \text{in } (0,\infty) \times \Omega, \\
        -\dot{\mathbf{x}}_{t} = \nabla_{\mathbf{x}}F_N(\varrho_t, \mathbf{x}_t) 
        =
        \frac{1}{N}(\mathbf{x}_{t} - \mathbf{b}_{t}).
    \end{cases}
\end{equation}

Existence of solutions is guaranteed from the JKO scheme, provided that the atoms ${\left(x_{i,t}\right)}_{i = 1}^N$ remain apart and away from the boundary $\partial \Omega$ for all $t>0$. Using item (4) of Theorem~\ref{thm.qualitative} with $a_i(t) \equiv 1/N$, this holds if the initial atoms $\mathbf{x}_0$ are distinct and away from the boundary. In addition, since the equation satisfied by $\mathbf{x}_t$ is a gradient flow of a semi-concave functional, atoms cannot collide in finite time. Therefore, we have existence of solutions to~\eqref{eq:longtime} for all times $t \ge 0$. Similar statements to this were already remarked for instance in~\cite[Section~6.4.2]{santambrogio2015optimal},~\cite{leclerc2020lagrangian}, but we resume this discussion in further details in Lemma~\ref{lem:longtime_wellposed} below.

A particularly relevant case is when the density is frozen $\varrho_t \equiv \varrho$, so the dynamics of $\mathbf{x}_t$ become a continuous-time analog of Loyd's algorithm for uniform barycenter quantization. For the discrete Lloyd algorithm, it is known (see~\cite{merigot2021non, bourne2015centroidal, portales2025sequential}) that the iterates converge toward configurations where each $x_i$ coincides with its barycenter. Our goal here is to establish the analogous continuous result,
\begin{equation}\label{eq.distance_points_barycenters}
    \norm{\mathbf{x}_t - \mathbf{b}_t} \cvstrong{t \to \infty}{} 0. 
\end{equation}
and to describe the regularity and stability properties of the flow leading to this convergence.

The evolution of $\varrho_t$ will only affect the analysis through mild regularity assumptions: as the solution of a Fokker–Planck equation with bounded drift, it is H\"older continuous in time and has uniformly bounded densities. Hence several intermediate results, such as Lemma~\ref{lem:longtime_wellposed}, remain valid for any continuous curve $\varrho_t$ with these properties, including the stationary case $\varrho_t \equiv \varrho$.

Finally, the gradient flow structure yields finite kinectic energy for $\mathbf{x}_t$
\begin{equation}\label{eq.finite_kinectic_energy}
  \int_0^\infty \norm{\dot{\mathbf{x}}_t}^2 \dd t 
  = 
  \int_0^\infty \norm{\mathbf{x}_t - \mathbf{b}_t}^2 \dd t < + \infty, 
\end{equation}
but this alone does not imply~\eqref{eq.distance_points_barycenters}. Uniform continuity of $t \mapsto \mathbf{x}_t - \mathbf{b}_t$ is also needed. While $\cdot{\mathbf{x}}_t$ it is clearly Lipschitz with constant $\diam\Omega$, the evolution of $\mathbf{b}_t$ can degenerate close to the generalized diagonal
\begin{equation}\label{eq.generalized_diagonal}
    \mathbb{D}_N \eqdef 
    \left\{\mathbf{x} \in \Omega^N : x_i = x_j \text{ for some } i \neq j\right\},
\end{equation}
see Figure~\ref{figure.instability_barycenters}. Indeed, the dynamics of the atoms become singular when they approach each other. A refined analysis of neighboring cells $i \sim j$, that is cells such that $\mathscr{H}^{d-1}(\Sigma_{ij}) > 0$ where $\Sigma_{ij} \eqdef \partial \Omega_i\cap \partial \Omega_j$, eads to quantitative lower bounds on pairwise distances and to global well-posedness, which we establish in Lemma~\ref{lem:longtime_wellposed} below.

\begin{figure}[t]\label{figure.instability_barycenters}
  \centering
  \tikzset{every picture/.style={line width=0.6pt}} 

\begin{tikzpicture}[x=0.75pt,y=0.75pt,yscale=-0.8,xscale=0.8]

\draw   (203.69,38) -- (253.83,104.19) -- (189.55,171.67) -- (125.26,136.63) -- (152.26,50.98) -- cycle ;
\draw   (98.26,1.67) -- (152.26,50.98) -- (125.26,136.63) -- (59.69,161.28) -- (1.83,108.08) -- (24.98,27.62) -- cycle ;
\draw  [fill={rgb, 255:red, 208; green, 2; blue, 27 }  ,fill opacity=1 ] (71.26,76.07) .. controls (71.26,74.28) and (69.82,72.82) .. (68.05,72.82) .. controls (66.27,72.82) and (64.83,74.28) .. (64.83,76.07) .. controls (64.83,77.86) and (66.27,79.31) .. (68.05,79.31) .. controls (69.82,79.31) and (71.26,77.86) .. (71.26,76.07) -- cycle ;
\draw  [fill={rgb, 255:red, 208; green, 2; blue, 27 }  ,fill opacity=1 ] (126.55,94.24) .. controls (126.55,92.44) and (125.11,90.99) .. (123.33,90.99) .. controls (121.56,90.99) and (120.12,92.44) .. (120.12,94.24) .. controls (120.12,96.03) and (121.56,97.48) .. (123.33,97.48) .. controls (125.11,97.48) and (126.55,96.03) .. (126.55,94.24) -- cycle ;
\draw  [fill={rgb, 255:red, 74; green, 144; blue, 226 }  ,fill opacity=1 ] (202.4,105.92) .. controls (202.4,104.12) and (200.97,102.67) .. (199.19,102.67) .. controls (197.42,102.67) and (195.98,104.12) .. (195.98,105.92) .. controls (195.98,107.71) and (197.42,109.16) .. (199.19,109.16) .. controls (200.97,109.16) and (202.4,107.71) .. (202.4,105.92) -- cycle ;
\draw  [fill={rgb, 255:red, 74; green, 144; blue, 226 }  ,fill opacity=1 ] (150.98,104.62) .. controls (150.98,102.83) and (149.54,101.37) .. (147.76,101.37) .. controls (145.99,101.37) and (144.55,102.83) .. (144.55,104.62) .. controls (144.55,106.41) and (145.99,107.86) .. (147.76,107.86) .. controls (149.54,107.86) and (150.98,106.41) .. (150.98,104.62) -- cycle ;
\draw   (603.69,39) -- (653.83,105.19) -- (589.55,172.67) -- (525.26,137.63) -- (552.26,51.98) -- cycle ;
\draw   (498.26,2.67) -- (552.26,51.98) -- (525.26,137.63) -- (459.69,162.28) -- (401.83,109.08) -- (424.98,28.62) -- cycle ;
\draw  [fill={rgb, 255:red, 74; green, 144; blue, 226 }  ,fill opacity=1 ] (471.26,77.07) .. controls (471.26,75.28) and (469.82,73.82) .. (468.05,73.82) .. controls (466.27,73.82) and (464.83,75.28) .. (464.83,77.07) .. controls (464.83,78.86) and (466.27,80.31) .. (468.05,80.31) .. controls (469.82,80.31) and (471.26,78.86) .. (471.26,77.07) -- cycle ;
\draw  [fill={rgb, 255:red, 74; green, 144; blue, 226 }  ,fill opacity=1 ] (526.55,95.24) .. controls (526.55,93.44) and (525.11,91.99) .. (523.33,91.99) .. controls (521.56,91.99) and (520.12,93.44) .. (520.12,95.24) .. controls (520.12,97.03) and (521.56,98.48) .. (523.33,98.48) .. controls (525.11,98.48) and (526.55,97.03) .. (526.55,95.24) -- cycle ;
\draw  [fill={rgb, 255:red, 208; green, 2; blue, 27 }  ,fill opacity=1 ] (602.4,106.92) .. controls (602.4,105.12) and (600.97,103.67) .. (599.19,103.67) .. controls (597.42,103.67) and (595.98,105.12) .. (595.98,106.92) .. controls (595.98,108.71) and (597.42,110.16) .. (599.19,110.16) .. controls (600.97,110.16) and (602.4,108.71) .. (602.4,106.92) -- cycle ;
\draw  [fill={rgb, 255:red, 208; green, 2; blue, 27 }  ,fill opacity=1 ] (550.98,104.62) .. controls (550.98,102.83) and (549.54,101.37) .. (547.76,101.37) .. controls (545.99,101.37) and (544.55,102.83) .. (544.55,104.62) .. controls (544.55,106.41) and (545.99,107.86) .. (547.76,107.86) .. controls (549.54,107.86) and (550.98,106.41) .. (550.98,104.62) -- cycle ;
\draw    (263.83,110.67) -- (384.83,110.67) ;
\draw [shift={(386.83,110.67)}, rotate = 180] [color={rgb, 255:red, 0; green, 0; blue, 0 }  ][line width=0.75]    (10.93,-3.29) .. controls (6.95,-1.4) and (3.31,-0.3) .. (0,0) .. controls (3.31,0.3) and (6.95,1.4) .. (10.93,3.29)   ;

\draw (51.33,57.64) node [anchor=north west][inner sep=0.75pt]  [font=\footnotesize]  {$b_{i}$};
\draw (206.9,91.38) node [anchor=north west][inner sep=0.75pt]  [font=\footnotesize]  {$b_{j}$};
\draw (106.62,95.27) node [anchor=north west][inner sep=0.75pt]  [font=\footnotesize]  {$x_{i}$};
\draw (152.05,110.85) node [anchor=north west][inner sep=0.75pt]  [font=\footnotesize]  {$x_{j}$};
\draw (451.33,58.64) node [anchor=north west][inner sep=0.75pt]  [font=\footnotesize]  {$b_{j}$};
\draw (606.9,92.38) node [anchor=north west][inner sep=0.75pt]  [font=\footnotesize]  {$b_{i}$};
\draw (507.62,96.27) node [anchor=north west][inner sep=0.75pt]  [font=\footnotesize]  {$x_{j}$};
\draw (552.05,111.85) node [anchor=north west][inner sep=0.75pt]  [font=\footnotesize]  {$x_{i}$};
\draw (283,89) node [anchor=north west][inner sep=0.75pt]   [align=left] {{\footnotesize swap atoms}};

\end{tikzpicture}
  \caption{A small variation of the atoms does not impact the barycenter too much, unless they are too close. In this situation, where the points are very close, swapping their labels implies a small variation in their positions, but exchanges the barycenters that were very far apart.}
\end{figure}
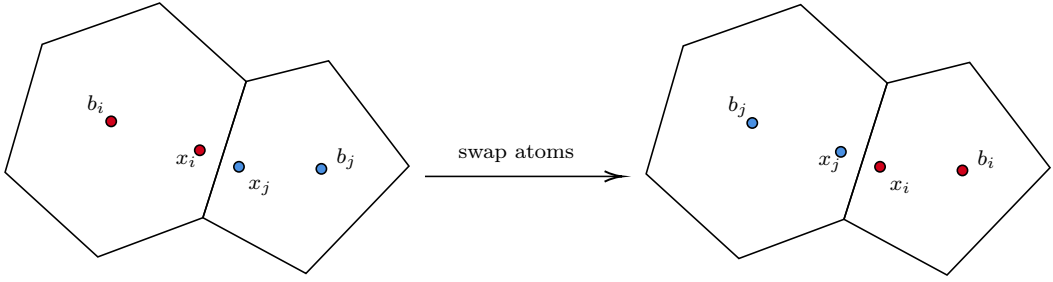

\begin{lemma}\label{lem:longtime_wellposed}
    Let $\Omega$ be a compact and convex subset of $\mathbb{R}^d$, and fix a continuous curve ${\left(\varrho_t\right)}_{t \ge 0}$ in $\Pac(\Omega)$ with uniformly bounded densities. Then the following hold:
    \begin{enumerate}
        \item The maximal interval of existence for the ODE 
        \[
            -\dot{\mathbf{x}}_t = \nabla_{\mathbf{x}} F_N(\varrho_t, \mathbf{x}_t) 
        \]
        is given by $\mathbb{R}_+$, provided that the initial atoms $\mathbf{x}_0$ are distinct and either $\partial \Omega$ is $\mathscr{C}^1$ or the initial conditions are away from the boundary $\partial \Omega$.
        
        \item For all times $t \ge 0$ the barycenters are away from each other, \textit{i.e.} there exists $C_\Omega$ such that
        \[
            |b_i(t) - b_j(t)| \ge \frac{C_{\Omega,\varrho}}{N^{d-1}}.        
        \]

        \item If ${\left(\varrho_t, \mathbf{x}_t\right)}_{t \ge 0}$ has finite kinectic energy~\eqref{eq.finite_kinectic_energy}, then there exists a constant $c$ for which 
        \[
            |x_{i}(t) - x_j(t)|\ge c,        
        \]
        for all $i \sim j$ and for all $t \ge 0$. 
    \end{enumerate} 
\end{lemma}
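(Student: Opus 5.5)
I will prove the three items in the order (2), (1), (3). Item (2) is purely static: it uses only that $\varrho_t$ has density bounded by $M$ and that every cell has mass $1/N$. Items (1) and (3) then combine semi-concavity of $F_N$ with the dissipation estimate.

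\textbf{Item (2).} Fix $t$ and two indices $i\neq j$. The cells $\Omega_i,\Omega_j$ are disjoint convex sets separated by a hyperplane $H$ (the bisector-type hyperplane of the Laguerre diagram), and each carries mass $1/N$. Since $\varrho_t\le M$, a slab of width $s$ around $H$ inside $\Omega$ has $\varrho_t$-mass at most $M\,\mathscr{H}^{d-1}(H\cap\Omega)\,s\le M(\diam\Omega)^{d-1}s$. Choosing $s$ so that this mass is at most $1/(2N)$, at least half of the mass of $\Omega_i$ lies at distance $\ge s$ from $H$ on its side, and every point of $\Omega_i$ lies on that side. Hence the barycenter $b_i$ lies at distance at least $s/2$ from $H$, and likewise $b_j$ on the other side. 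This gives $|b_i-b_j|\ge s\gtrsim 1/(NM(\diam\Omega)^{d-1})$. The paper's exponent $N^{d-1}$ is weaker than this bound, so in any case the claim holds. If needed I would refine with a Brunn--Minkowski/convexity argument, using that the mass of a convex body concentrates away from supporting hyperplanes.

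\textbf{Item (1).} By Theorem~\ref{thm.qualitative}(4) with $a_i\equiv1/N$, or by item (3) of that theorem when $\partial\Omega$ is $\mathscr C^1$, atoms stay in $\intt\Omega$. Away from $\mathbb D_N$, Lemma~\ref{lemma.first_variation} gives that the vector field $\frac1N(\mathbf x-\mathbf b)$ is continuous, and locally Lipschitz in $\mathbf x$ because $\varrho_t$ is bounded. Local existence therefore follows from Cauchy--Lipschitz with continuity in $t$. It remains to rule out reaching $\mathbb D_N$ in finite time.

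Suppose $x_i(t)-x_j(t)\to0$ as $t\to t^*$. Since $|\dot{\mathbf x}|\le\diam\Omega$, the curve extends continuously to $t^*$, with $x_i(t^*)=x_j(t^*)$. Set $u=x_i-x_j$. Then
\[
\tfrac{\dd}{\dd t}\tfrac12|u|^2=\tfrac1N\bigl(-|u|^2+u\cdot(b_i-b_j)\bigr).
\]
The Laguerre hyperplane separating cells $i$ and $j$ is orthogonal to $u$, with $\Omega_i$ on the side pointing in the direction of $u$. Hence $u\cdot(b_i-b_j)\ge0$; in fact the proof of item (2) gives $u\cdot(b_i-b_j)\ge |u|\,s$. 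This yields $\frac{\dd}{\dd t}|u|\ge \frac1N(s-|u|)>0$ once $|u|<s$, which contradicts collision. This is the semi-concavity mechanism made quantitative.

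\textbf{Item (3).} The same differential inequality $\frac{\dd}{\dd t}|u|\ge\frac1N(s-|u|)$, valid for neighbouring cells $i\sim j$, shows that $|u|$ can never drop below $\min(|u(0)|,s)$. This gives the constant $c$ directly, even without using finite kinetic energy. The main obstacle is justifying $u\cdot(b_i-b_j)\ge s|u|$ uniformly: one must check that the separating hyperplane is exactly $u^\perp$ and that the slab estimate holds with a constant independent of the cell geometry. The bound $\varrho_t\le M$ together with the $\mathscr H^{d-1}$ bound on hyperplane sections of $\Omega$ handles this. If it fails because a cell is degenerate, I would fall back on \eqref{eq.finite_kinectic_energy} together with uniform continuity to argue that $|\mathbf x-\mathbf b|$ becomes small, and then use item (2) to conclude that $|x_i-x_j|\ge|b_i-b_j|-2|\mathbf x-\mathbf b|$ stays bounded below.
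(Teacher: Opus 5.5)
Your proposal is correct. Item (2) follows the paper's idea, while items (1) and (3) take a different and stronger route.

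For (2), you and the paper both use the same fact: a bounded density puts little mass in a thin slab next to the hyperplane separating $\Omega_i$ from $\Omega_j$, so the barycenter of a cell of mass $1/N$ sits at a definite distance from that hyperplane. Your execution is the right one. Take the one-sided slab of width $s$ on the side of $\Omega_i$; its volume is at most $s\,\omega_{d-1}(\diam\Omega)^{d-1}$, which requires bounding all parallel sections, not just $H\cap\Omega$. The paper's volume bound has the exponents of $\diam\Omega$ and $d_{ij}$ interchanged.

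The resulting order $1/N$ implies the stated form for $d\ge 2$, but not for $d=1$. There an $N$-independent constant is in fact impossible: equal-mass cells of the uniform density on an interval have barycenters $1/N$ apart. With $N$ fixed, your bound still gives the uniform-in-time content of the claim.

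For (1), the paper uses only the sign $\langle x_i-x_j,b_i-b_j\rangle\ge 0$ to get $|x_i(t)-x_j(t)|\ge e^{-t/N}|x_i(0)-x_j(0)|$. Your signs are the correct ones; the paper's display has sign slips, though its conclusion stands. This rules out finite-time collision but degenerates as $t\to\infty$; its no-collision step uses no density bound. The paper then proves (3) by a separate contradiction argument combining finite kinetic energy, the Lipschitz bound on $\mathbf{x}_t$, and item (2). Near-collisions at times $t_k\to\infty$ would keep $\norm{\mathbf{x}_t-\mathbf{b}_t}$ bounded below on intervals of fixed length, contradicting $\int_0^\infty\norm{\mathbf{x}_t-\mathbf{b}_t}^2\,\dd t<\infty$.

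You instead project the quantitative separation onto $u/|u|$ to get $\frac{\dd}{\dd t}|u|\ge\frac1N(s-|u|)$. This gives the explicit bound $|x_i(t)-x_j(t)|\ge\min(|x_i(0)-x_j(0)|,s)$ for every pair $i\neq j$ and every $t$, without the kinetic-energy hypothesis. It settles (1) and (3) at once.

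The obstacle you flag is not real. From the definition of Laguerre cells, $\Omega_i\subset\{x:\langle x,x_i-x_j\rangle\ge c_{ij}\}$, and $\Omega_j$ lies in the complementary halfspace. The slab estimate uses only this halfspace and $\varrho_t(\Omega_i)=1/N$, never the shape of the cell, so your kinetic-energy fallback is unnecessary.
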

\begin{proof}
    To prove the first item, we recall that from~\cite[Thm.~1]{de2019differentiation}, if $\varrho \in \Pac(\Omega)$ then $\mathbf{x} \mapsto F_N(\varrho, \mathbf{x})$ is has up to $2$ derivatives in $\Omega^N \setminus \mathbb{D}_N$, where $\mathbb{D}_N$ is the generalized diagonal defined in~\eqref{eq.generalized_diagonal}, the set of colliding atoms, and the Hessian is given by 
    \[
        \nabla_{x_i,x_j} F_N(\varrho, \mathbf{x})
        = 
        \int_{\Sigma_{ij}} 
        (x - x_i) \otimes (x - x_j) \frac{\varrho(x)}{|x_i - x_j|} \dd \mathscr{H}^{d-1}(x), 
        \text{ for } i \neq j,
    \] 
    and 
    \[
        \nabla_{x_i,x_i} F_N(\varrho, \mathbf{x})
        = 
        \frac{1}{N} I - \sum_{j \neq i} \nabla_{x_i,x_j} F_N(\varrho, \mathbf{x}), 
        \text{ for } i = 1, \ldots, N.
    \]
    If, in addition, $\varrho$ is $\mathscr{C}^{0}(\Omega)$ then $F_N(\varrho, \cdot)$ is $\mathscr{C}^{2}$ in $\Omega^N \setminus \mathbb{D}_N$. On the other hand, $t \mapsto \nabla F_N(\varrho_t, \mathbf{x})$ is continuous, given $\mathbf{x} \in \Omega^N \setminus \mathbb{D}_N$. Either way, one can check that the Hessian is bounded in compact subsets of $\Omega^N \setminus \mathbb{D}_N$, so that $\mathbf{x} \mapsto \nabla_{\mathbf{x}F_N(\varrho, \mathbf{x})}$ is locally Lipschitz, so by the Cauchy-Lipschitz theorem the flow is locally well-posed in $\intt\Omega^N \setminus \mathbb{D}_N$.

    The maximal interval of existence theorem, see for instance~\cite[Thm.~2.3]{bressan2007introduction}, says that if $T$ is the suppremum of the times for which a solution $\mathbf{x}_t$ to the ODE exists, then either $T = +\infty$ or 
    \[
        \lim_{t \to T^-} 
        \left(
            |\mathbf{x}_t| + 
            \frac{1}{\dist(\mathbf{x}_t, \partial (\intt \Omega^N \setminus \mathbb{D}_N))}
        \right)
        = +\infty.
    \]

    For all $t < T$, the atoms $\mathbf{x}_t$ remain from a positive distance to the boundary $\partial \Omega$ due to item (4) of theorem~\ref{thm.qualitative}, provided that $\partial \Omega$ is $\mathscr{C}^1$ or the initial atoms are away from the boundary. In addition, to control the distance of $\mathbf{x}_t$ away from $\mathbb{D}_N$, we proceed as in~\cite{leclerc2020lagrangian,merigot2021non}: since $b_{i,t} \in \Omega_{i,t}$ we have that for $i \neq j$ that
    \begin{align*}
        \frac{\dd}{\dd t} \frac{1}{2}|x_{i,t} - x_{j,t}|^2 
        &= 
        -\inner{x_{i,t} - x_{j,t}, 
                b_{i,t} - x_{i,t} - (b_{j,t} - x_{j,t})
        }\\
        &= 
        |x_{i,t} - x_{j,t}|^2 + 
        \underbrace{\inner{x_{i,t} - x_{j,t}, b_{j,t} - b_{i,t}}}_{\ge 0}
        \ge |x_{i,t} - x_{j,t}|^2.
    \end{align*} 
    From the reverse Grönwall inequality it follows that $|x_{i,t} - x_{j,t}|^2 \ge e^{-t}|x_{i,0} - x_{j,0}|^2$ for all $t < T$. We conclude that $\dist(\mathbf{x}_t, \partial (\intt \Omega^N \setminus \mathbb{D}_N)) \ge c_T > 0$ for all $t < T$ and $T > 0$. Therefore, the only possibility is that $T = +\infty$.

    To prove item (2), we use the upper bound on the density $\norm{\varrho_{t}}_\infty \le C_\varrho$. Given $i\sim j$, assume w.l.o.g.~that $\Sigma_{ij} \subset \{x_d = 0\}$. Set $d_{ij}$ such that $\varrho_t( \{x_d > d_{ij} 0\}) = 1/2N$. Then it must hold that $b_i \in \{x_d = d_{ij}\}$ and $\dist(b_i, \partial \Sigma_{ij}) = d_{ij}$. As a result, we have that 
    \[
      \frac{1}{2N} = \varrho_t( \{x_d > d_{ij} 0\}) 
      \le 
      C_\varrho|\{x_d > d_{ij}\}\cap\Omega| \le C_\varrho \diam\Omega d_{ij}^{d-1}, 
    \]
    and the result follows by rearranging the above inequality and bounding $|b_i - b_j| \ge \dist(b_i, \Sigma_{ij}) + \dist(b_j,\Sigma_{ij})$.

    Moving on to item (3), suppose by contradiction that for every $\varepsilon > 0$ there exists a sequence $t_k \cvstrong{k \to \infty}{} +\infty$ for which there is a pair $i_k\sim j_k$ such that 
    \[
      |x_{i_k}(t_k) - x_{j_k}(t_k)| \le \frac{\varepsilon}{3}.
    \]
    Since $t \mapsto \mathbf{x}_t$ is Lipschitz continuous with constant $\diam \Omega$, we see that there is an interval $I_k$ centered at $t_k$ of size $|I_k| = 2\varepsilon/3\diam\Omega$ such that 
    \begin{align*}
        |x_{i_k}(t) - x_{j_k}(t)| 
        &\le 
        |x_{i_k}(t_k) - x_{i_k}(t)| + |x_{i_k}(t_k) - x_{j_k}(t_k)| + 
        |x_{j_k}(t_k) - x_{j_k}(t)| 
        \le \varepsilon
    \end{align*}
    for all $t \in I_k$. In particular, taking $\varepsilon$ much smaller than $\frac{C_{\Omega,\varrho}}{N^{d-1}} \le |b_i - b_j|$, so that $\frac{C_{\Omega,\varrho}}{N^{d-1}} \le \norm{\mathbf{x}_t - \mathbf{b}_t}$ inside $I_k$ for all $k \in \mathbb{N}$. In this case we would have 
    \[
      \int_0^\infty \norm{\mathbf{x}_t - \mathbf{b}_t}^2 \dd t 
      \ge 
      \sum_{k \in \mathbb{N}} \int_{I_k} \norm{\mathbf{x}_t - \mathbf{b}_t}^2 \dd t 
      = +\infty.
    \]
    This gives a contraction and hence $|x_i - x_j|$ must be uniformly bounded from below. 
\end{proof}

The above Lemma gives a uniform lower bound on the distance of $\mathbf{x}_t$ to the generalized diagonal, meaning that the dynamics does not degenerate asymptotically. However, the evolution of the $\mathbf{b}_t$ depends on the smoothness of the evolution of the free boundaries $\Omega_i(t) = \Lag_i(\mathbf{x}_t, \psi_t)$, that is on the regularity of $\psi_t$. Since $t\mapsto \mathbf{x}_t$ is uniformly Lipschitz, we shall see below that the smoothness of $t \mapsto \psi_t$ depends essentially on the evolution of $\varrho_t$. 

\begin{lemma}\label{lemma.smoothness_psi_t}
    Let $\Omega$ be a compact and convex subset of $\mathbb{R}^d$, and fix a continuous curve ${\left(\varrho_t\right)}_{t \ge 0}$ in $\Pac(\Omega)$ with uniformly bounded densities, and such that $t \mapsto \varrho_t$ is uniformly continuous in time, with modulus of continuity $\omega_\varrho$. Then $t \mapsto \psi_t$, and consequently $t \mapsto \mathbf{b}_t$, are uniformly continuous with modulus of continuity $\omega_\varrho$.
\end{lemma}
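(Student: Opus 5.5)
The idea is to view $\psi_t$ as the unique zero (normalized, say, by $\sum_i\psi_{t,i}=0$, or by the convention fixed in the paper) of the mass-balance map
\[
  G(t,\psi) \eqdef \Big(\varrho_t\big(\Lag_i(\mathbf{x}_t,\psi)\big) - \tfrac1N\Big)_{i=1}^N ,
\]
and to use a quantitative strong-monotonicity (stability) property of the semi-discrete dual problem. The dual functional $\mathcal{K}_t(\psi)=\int\min_j(\tfrac12|x-x_j|^2-\psi_j)\dd\varrho_t+\tfrac1N\sum_j\psi_j$ is concave, and its gradient is $-G(t,\psi)$. Its Hessian is the weighted graph Laplacian with weights $w_{ij}=\int_{\Sigma_{ij}}\varrho_t/|x_i-x_j|\dd\mathscr{H}^{d-1}$. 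Lemma~\ref{lem:longtime_wellposed} gives a uniform lower bound $|x_i-x_j|\ge c$ for neighbours, and $\varrho_t\le C_\varrho$; these control the weights from above. I would also need a lower bound on the second eigenvalue of this Laplacian, i.e.\ strong concavity of $\mathcal{K}_t$ on the hyperplane orthogonal to constants, uniformly in $t$. For this I would invoke the known stability estimates for semi-discrete OT (Kitagawa--Mérigot--Thibert~\cite{kitagawa2019convergence}, Mérigot--Delalande type results). These give $\|\psi-\psi'\|\le C\,\|G(t,\psi)-G(t,\psi')\|$, possibly with a Hölder exponent. The constant depends on lower bounds on the cell masses ($=1/N$), on a positive lower bound for the density on its support (or connectedness of the support), and on the atom separation.

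With this in hand, I would write, for $s,t\ge0$ and using $G(t,\psi_t)=0=G(s,\psi_s)$,
\[
  G(s,\psi_t)-G(s,\psi_s) = G(s,\psi_t)-G(t,\psi_t),
\]
and split the right-hand side into two parts. The first is the change of density, $|\varrho_s(L)-\varrho_t(L)|$ with $L=\Lag_i(\mathbf{x}_t,\psi_t)$, which is bounded by $\omega_\varrho(|t-s|)$. Here I need the modulus measured in a norm that sees convex sets, e.g.\ total variation or $L^1$; in Wasserstein distance one uses that boundaries of convex cells have small neighbourhoods, with an error of order $C_\varrho\,\mathrm{Per}(L)\,\omega$. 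The second is the change of atoms, $|\varrho_s(\Lag_i(\mathbf{x}_t,\psi_t))-\varrho_s(\Lag_i(\mathbf{x}_s,\psi_t))|$. By the bounded density and the separation of neighbouring atoms, the Laguerre faces move by $O(|\mathbf{x}_t-\mathbf{x}_s|/c)$, which is $O(\diam\Omega\,|t-s|)$ since $\mathbf{x}$ is Lipschitz. This term is dominated by (or absorbed into) $\omega_\varrho$, as $\omega_\varrho$ can be taken at least linear. The stability estimate then yields $|\psi_t-\psi_s|\le C\omega_\varrho(|t-s|)$.

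For the barycenters, I write
\[
  b_i(t)=N\int_{\Lag_i(\mathbf{x}_t,\psi_t)}x\dd\varrho_t .
\]
Its increment splits into a density part, bounded by $\diam\Omega\cdot\omega_\varrho$ in the same way as above, and a cell part. The cell part is bounded by $C_\varrho\diam\Omega\,|\Lag_i(\mathbf{x}_t,\psi_t)\triangle\Lag_i(\mathbf{x}_s,\psi_s)|$. This symmetric difference is controlled linearly by $|\psi_t-\psi_s|+|\mathbf{x}_t-\mathbf{x}_s|$: it is a quantitative version of Lemma~\ref{lemma.cont_laguerre}, obtained because each face $\Sigma_{ij}$ moves by at most $(|\psi_i-\psi_j|\text{ change}+\text{atom change})/|x_i-x_j|$ and has area $\le C(\diam\Omega)^{d-1}$.

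The main obstacle is the uniform-in-time strong concavity (stability) constant for the dual problem. Upper bounds on the density do not suffice; one needs the Laguerre adjacency graph to stay connected with face weights bounded below. My first attempt would be to exploit that the density solves a Fokker--Planck equation with bounded drift, so by Harnack it is bounded below for $t\ge t_0$. Combined with the convexity of $\Omega$ (hence connectivity of $\supp\varrho_t$) and the uniform separation from Lemma~\ref{lem:longtime_wellposed}, the Poincaré--Wirtinger type inequality for the Laguerre Laplacian from~\cite{kitagawa2019convergence} should then give a uniform constant.
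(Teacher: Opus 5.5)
Your route is the paper's route. You treat $\psi_t$ as the zero, on $\{\sum_i\psi_i=0\}$, of the mass map $(\varrho_t(\Lag_i(\mathbf{x},\psi))-\frac1N)_i$. You split its increment into an $(\mathbf{x},\psi)$-variation and a density variation (you pair the terms slightly differently from the paper), invert $D_\psi G$ on the hyperplane, and get $\mathbf{b}_t$ from $L^1$-continuity of the cells. Your strong-monotonicity estimate replaces the paper's implicit-function/mean-value step. Your linear bound on $|\Lag_i(\mathbf{x}_t,\psi_t)\triangle\Lag_i(\mathbf{x}_s,\psi_s)|$ is what is actually needed for $\mathbf{b}_t$ to inherit the modulus. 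Mere uniform continuity of the cell map, which is what the paper invokes, would not give the same modulus.

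The obstacle you single out is genuine, and it is precisely the step the paper does not justify. The paper asserts that $D_\psi G_t$ has a bounded inverse on $E$ because the atoms are uniformly separated (Lemma~\ref{lem:longtime_wellposed}). As you observe, that is the wrong ingredient:
\begin{itemize}
\item $|x_i-x_j|\ge c$ and $\varrho_t\le C_\varrho$ only bound the weights $w_{ij}$ from above.
\item A spectral gap for the Laguerre Laplacian needs lower bounds. These come from $|x_i-x_j|\le\diam\Omega$ together with quantitative connectivity of $\varrho_t$, i.e.\ a lower bound on the convex $\Omega$, or a uniform Poincar\'e--Wirtinger constant on a connected support. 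This is exactly where the Kitagawa--M\'erigot--Thibert estimate enters.
\end{itemize}
Neither the statement nor the paper's proof contains such a hypothesis. So your argument, like the paper's once repaired, proves the lemma only under this additional assumption. The assumption cannot simply be dropped. Take $\Omega=[0,3]\times[0,1]$, $\varrho_t\equiv\varrho$ uniform on $([0,1]\cup[2,3])\times[0,1]$, and atoms at $(\frac12,\frac12)$ and $(\frac52,\frac12)$. The atoms sit at their barycenters, yet every $\psi_1-\psi_2\in[-1,1]$ is optimal, so $\psi_t$ is not even unique modulo constants.

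Your Harnack route also covers less than Theorem~\ref{thm.conv_bary} needs. It supplies the lower bound only in the Fokker--Planck case and only for $t\ge t_0$. For the frozen-density case of that theorem you must assume the lower bound, or the Poincar\'e--Wirtinger inequality, on $\varrho$ directly.

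A minor point: if $\omega_\varrho$ is a Wasserstein modulus, then for convex $L$ the difference $|\varrho_s(L)-\varrho_t(L)|$ is only of order $\sqrt{C_\varrho\,\mathrm{Per}(L)\,W_1(\varrho_s,\varrho_t)}$, not linear in $\omega_\varrho$. Work with the $L^1$-modulus instead, as the paper does in Theorem~\ref{thm.conv_bary}.
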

\begin{proof}
  The regularity of evolution of barycenters is dictated by the regularity of the evolution of the curve $\varrho_t$ and of the optimal Kantorovich potentials $\psi_t$. To see this, define the function $G_t : \Omega^N \times \mathbb{R}^N \to \mathbb{R}^{N+1}$ as 
  \[
    \begin{array}{rl}
      G_t : \Omega^N \times \mathbb{R}^N 
      &\to 
      E \eqdef 
      \left\{
        \sum \psi_i = 0  
      \right\}\\
      (\mathbf{x}, \psi)
      &\mapsto 
      {\left(
        G_{i,t}(\mathbf{x},\psi)
      \right)}_{i = 1}^N,
    \end{array}
    \text{ where }
    G_{i,t}(\mathbf{x},\psi)
    \eqdef 
    \varrho_t(\Lag_i(\mathbf{x},\psi)) - \frac{1}{N}.
  \]

  It then follows that for all $t\ge 0$, at the evolution of the gradient flow we have $G_t(\mathbf{x}_t,\psi_t) \equiv 0$. In addition, the Jacobian of $G_t$ can be computed explicitly as 
  \begin{align*}
    \frac{\partial G_{t,i}}{\partial \psi_j} = 
    - \int_{\Sigma_{ij}}\frac{\varrho_t(x)}{|x_j - x_i|}\dd \mathscr{H}^{d-1}(x) 
    \text{ for $j\neq i$, }
    &\quad 
    \frac{\partial G_{t,i}}{\partial \psi_i}
    = 
    - \sum_{j \neq i} \frac{\partial G_{t,i}}{\partial \psi_j},\\ 
    \frac{\partial G_{t,i}}{\partial x_j} = 
    - 
    \int_{\Sigma_{ij}}\frac{x - x_i}{|x_j - x_i|}\varrho_t(x) \dd \mathscr{H}^{d-1}(x)
    \text{ for $j\neq i$, }
    &\quad 
    \frac{\partial G_{t,i}}{\partial x_i}
    = 
    - \sum_{j \neq i} \frac{\partial G_{t,i}}{\partial x_j}.
  \end{align*}

  Therefore, for each $t\ge 0$ the Jacobian $D_\psi G_t$ has rank $N-1$ and we can apply the implicit function theorem, because $E$ is a $N-1$-dimensional hyperplane, to write $\psi_t$ as a $\mathscr{C}^1$ function of $\mathbf{x}_t$, $\psi_t = g_t(\mathbf{x}_t)$. In addition, from Lemma~\ref{lem:longtime_wellposed}, since the points $x_i$ are uniformly separated, the Jacobian $D_\psi G_t$ has a bounded inverse in $E$. We can therefore estimate the sensitivity of $\psi_t$ on time by noticing that 
  \begin{align*}
    0 &= 
    \underbrace{
      G_t(\mathbf{x}_t,\psi_t) - G_t(\mathbf{x}_s,\psi_s)
    }_{\eqdef \Delta_{x,\psi}G} 
    + 
    \underbrace{
      G_t(\mathbf{x}_s,\psi_s) - G_s(\mathbf{x}_s,\psi_s)
    }_{\eqdef \Delta_{t,s}G} 
  \end{align*}
  and estimating each term. 

  For the first one, from the $\mathscr{C}^1$ dependence of $\psi_t$ on $\mathbf{x}_t$ for $t$ fixed, we can apply the mean value theorem to find $(\mathbf{\bar x},\psi)$ such that 
  \[
    \Delta_{x,\psi}G = 
    D_{\mathbf{x}}G_t(\mathbf{\bar x},\bar \psi)(\mathbf{x}_t-\mathbf{x}_s) 
    + 
    D_{\psi}G_t(\mathbf{\bar x},\bar \psi)(\psi_t-\psi_s). 
  \]
  For the second term, each component is controlled by the modulus of continuity in time of $\varrho_t$: 
  \[
    \Delta_{t,s}G_i = 
    \varrho_t(\Lag_i(\mathbf{x}_s,\psi_s))
    - 
    \varrho_s(\Lag_i(\mathbf{x}_s,\psi_s)).
  \]
  As a result, since the Jacobian w.r.t.~$\psi$ has bounded inverse and $t\mapsto \mathbf{x}_t$ is Lipschitz, we get that 
  \[
    |\psi_t - \psi_s|
    \le 
    \norm{D_{\psi}G_t(\mathbf{\bar x},\bar \psi)^{-1}}_\infty 
    \left(
      \diam\Omega|t - s| + \omega_{\varrho}(|t - s|)
    \right). 
  \]

  This implies that $t \mapsto \psi_t$ has the same modulus of continuity of $t \mapsto \varrho_t$ in $L^1$, and from the uniform continuity of $(\mathbf{x}, \psi) \mapsto \Lag_i(\mathbf{x}_t,\psi_t)$ in the strong topology of $L^1(\Omega)$, we conclude that the curve of barycenters $t \mapsto \mathbf{b}_t$ also inherits the same modulus of continuity of $\varrho_t$.
\end{proof}

We can now sinthesize these arguments into the desired convergence of the distance to barycenters~\eqref{eq.distance_points_barycenters}. 
\begin{theorem}\label{thm.conv_bary}
  If either $\varrho_t \equiv \varrho$ is fixed, or ${(\varrho_t, \mathbf{x}_t)}_{t \ge 0}$ solve~\eqref{eq:longtime}, then 
    \[
      \norm{\mathbf{x}_t - \mathbf{b}_t} \cvstrong{t \to \infty}{} 0.
    \]
\end{theorem}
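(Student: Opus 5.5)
The proof will follow the Barbalat-type argument already suggested before Lemma~\ref{lem:longtime_wellposed}: a nonnegative function that is integrable on $\mathbb{R}_+$ and uniformly continuous must tend to $0$. Set $h(t) \eqdef \norm{\mathbf{x}_t - \mathbf{b}_t}^2$. The two steps are to show $h \in L^1(\mathbb{R}_+)$ and to show that $h$ is uniformly continuous. The conclusion is then the standard one. If $h(t_k) \ge \eta$ along a sequence $t_k \to \infty$, uniform continuity gives intervals of fixed length around each $t_k$ on which $h \ge \eta/2$. Passing to a subsequence makes these intervals disjoint, and then $\int_0^\infty h = +\infty$, a contradiction.

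\textbf{Integrability.} When $\varrho_t \equiv \varrho$, the map $\mathbf{x}\mapsto F_N(\varrho,\mathbf{x})$ is nonnegative. Along the flow we have $\frac{\dd}{\dd t}F_N(\varrho,\mathbf{x}_t) = -\norm{\nabla_{\mathbf{x}}F_N}^2 = -\frac{1}{N^2}h(t)$, so $\int_0^\infty h \le N^2 F_N(\varrho,\mathbf{x}_0) < \infty$. This uses the differentiability from Lemma~\ref{lemma.first_variation}, which is valid because Lemma~\ref{lem:longtime_wellposed}(1) keeps the atoms off $\mathbb{D}_N$ and off $\partial\Omega$. For the coupled system~\eqref{eq:longtime}, I use the energy dissipation of the gradient flow of $\mathscr{E} = \mathscr{H} + F_N$ instead. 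The energy is bounded below, since $\mathscr{H}\ge -\log|\Omega|$ on a bounded domain. Its dissipation dominates $\int_0^\infty \norm{\dot{\mathbf{x}}_t}^2\dd t$, which yields the finite kinetic energy~\eqref{eq.finite_kinectic_energy}.

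\textbf{Uniform continuity.} The map $t\mapsto \mathbf{x}_t$ is Lipschitz with constant $\diam\Omega$, since $|\dot{\mathbf{x}}_t| \le \frac{1}{N}|\mathbf{x}_t-\mathbf{b}_t|$ and both points lie in $\Omega$. Since $h$ is a bounded product of bounded terms, it therefore suffices to show that $t\mapsto\mathbf{b}_t$ is uniformly continuous on $\mathbb{R}_+$, and for this I invoke Lemma~\ref{lemma.smoothness_psi_t}. Its hypotheses must be checked globally in time.
\begin{itemize}
\item Uniformly bounded densities: trivial in the fixed case. In the coupled case this follows from the $L^\infty$ estimate of Theorem~\ref{thm.L2H1convergence}. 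That bound grows like $e^{dt}$, so a time-uniform bound needs a separate maximum-principle or comparison argument for the Fokker--Planck equation with bounded drift $\nabla\Phi$, $|\nabla\Phi|\le\diam\Omega$.
\item Uniform continuity of $t\mapsto\varrho_t$ in $L^1$: trivial in the fixed case. In the coupled case I would obtain it from the $\frac12$-H\"older continuity in $W_2$ given by the finite dissipation, upgraded to $L^1$ through the uniform $L^\infty$ bound together with parabolic smoothing.
\item Uniform invertibility of $D_\psi G_t$ on $E$: this requires the neighbor atoms to stay uniformly separated. That is Lemma~\ref{lem:longtime_wellposed}(3), which applies precisely because the kinetic energy is finite. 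It is also needed that the interface masses $\int_{\Sigma_{ij}}\varrho_t/|x_i-x_j|$ stay nondegenerate for the connectivity graph of the cells.
\end{itemize}
With these checks in hand, Lemma~\ref{lemma.smoothness_psi_t} gives a modulus of continuity for $\psi_t$ and $\mathbf{b}_t$ that is uniform on $[0,\infty)$.

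\textbf{Main obstacle.} I expect the hardest step to be the time-uniform lower bound on the smallest nonzero eigenvalue of $-D_\psi G_t$. This matrix is a weighted graph Laplacian, and its spectral gap depends on lower bounds for $\varrho_t$ on the interfaces, not only on the separation of the atoms. In the coupled case I would first try a uniform positive lower bound on $\varrho_t$ for $t\ge1$, via a Harnack inequality for the Fokker--Planck equation with bounded drift. If that fails, I would argue by contradiction through compactness: take limits along a sequence $t_k\to\infty$ of the configuration $(\varrho_{t_k},\mathbf{x}_{t_k})$, which stays in a compact set away from $\mathbb{D}_N$, and use the continuity of $D_\psi G$ in these variables.
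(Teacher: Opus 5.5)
Your proposal follows the paper's proof essentially step for step: finite kinetic energy~\eqref{eq.finite_kinectic_energy} from the dissipation of $\mathscr{E}$ (or of $F_N(\varrho,\cdot)$ in the frozen case), then uniform continuity of $t\mapsto\norm{\mathbf{x}_t-\mathbf{b}_t}$ from Lemmas~\ref{lem:longtime_wellposed} and~\ref{lemma.smoothness_psi_t} together with $L^1$ time-continuity of $\varrho_t$ from parabolic regularity with bounded drift, and finally the Barbalat-type conclusion. The extra checks you flag are real soft spots that the paper settles only by citation or assertion: the $L^\infty$ bound of Theorem~\ref{thm.L2H1convergence} does grow like $e^{dt}$, and the bounded inverse of $D_\psi G_t$ on $E$ is governed by the interface masses $\int_{\Sigma_{ij}}\varrho_t\,\dd\mathscr{H}^{d-1}$, whereas atom separation only controls $D_{\mathbf{x}}G_t$ and the continuity of the cells; so these checks sharpen the paper's argument rather than change it.
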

\begin{proof}
    If either $\varrho_t\equiv\varrho$ is fixed, or if ${(\varrho_t, \mathbf{x}_t)}_{t \ge 0}$ solves~\eqref{eq:longtime}, then a simple gradient flow argument gives that $\mathbf{x}_t$ has finite kinectic energy~\eqref{eq.finite_kinectic_energy}. For instance, in the latter case notice that the time derivative of the energy is given by 
    \[
      \frac{\dd}{\dd t} 
      \mathscr{E}(\varrho_t,\mathbf{x}_t) 
      = 
      - I(t) - \frac{1}{N^2}\norm{\mathbf{x}_t - \mathbf{b}_t}^2,
    \]
    where $I(t)$ corresponds to the Fisher information of $\varrho_t$ w.r.t.~the Gibbs measure associated with the Kantorovich potential $\Phi_t$: 
    \[
      I(t) = 
      \int_{\Omega}
      \left|
        \frac{\nabla \varrho_t}{\varrho_t} + \nabla \Phi_t(x)
      \right|^2\dd \varrho_t \ge 0.
    \] 
    Therefore integrating over $\mathbb{R}_+$ we get that 
    \[
      \frac{1}{N^2}\int_0^\infty \norm{\mathbf{x}_t - \mathbf{b}_t}^2\dd t 
      \le 
      \mathscr{E}(\varrho_0,\mathbf{x}_0) < +\infty.
    \]
    In addition, it also holds that $t\mapsto \varrho_t$ is uniformly continuous in $L^1(\Omega)$. In fixed density case this holds trivially and in the latter case it is a consequence of being the solution of a Fokker-Planck equation with bounded drift having therefore a bounded and H\"older continuous density, see for instance~\cite{chizat2025convergence} and~\cite{bogachev2015fokker}. 

    Using Lemmas~\ref{lem:longtime_wellposed} and~\ref{lemma.smoothness_psi_t} above, we then have from the previous discussion that
  \[
    \int_0^\infty\norm{\mathbf{x}_t - \mathbf{b}_t}^2\dd t < + \infty 
    \text{ and } 
    t\mapsto \norm{\mathbf{x}_t - \mathbf{b}_t} \text{ is uniformly continuous,}
  \]
  as a consequence $\norm{\mathbf{x}_t - \mathbf{b}_t} \cvstrong{t \to \infty}{} 0$, and the result follows. 
\end{proof}


\section{Numerical Simulations and Conjectures}\label{sec.numerics}
In this section we describe a splitting scheme for the numerical simulation of the dynamic quantization equation~\eqref{eq.dynamic_quantization}. Our goal with the numerical experiments is to help elaborate conjectures on the long-time behavior of our system. The scheme consists on discretizing the time interval where one wishes to compute solutions and update alternatively the approximations for the continuous density $\varrho$ and the atoms' positions $\mathbf{x}$. 

The PDE that gives the evolution of $\varrho$ is discretized on a mesh $\mathcal{T}$ that is independent of the evolution of the points $\mathbf{x}$ or the Laguerre tessellation induced by the semi-discrete optimal transportation problem. This allows for a splitted scheme; at each time step we first evolve the positions $\mathbf{x}$ with one step of an Euler scheme for their underlying ODE with the density fixed, which can be easily computed with the \texttt{pysdot} package~\cite{pysdot}. This evolution has the advantage of being performed off the grid $\mathcal{T}$. Afterwards the evolution of $\varrho$ can be computed efficiently with general purpose solvers of Fokker-Planck type equations such as \texttt{fipy}~\cite{guyer2009fipy}. 

From one hand, better integrating the evolution of Laguerre cells and the underlying mesh $\mathcal{T}$ which determines the resolution of the continuous density could lead to numerical methods that take into account the flow of mass entering and exiting each Laguerre cell, for instance with a finite volume approach~\cite{cances2020variational}. On the other hand, this approach with a independent mesh and Laguerre tesselation allows for an off the grid approach for the atomic measure's evolution, since then the Laguerre tessellation is uniquely determined by the scalar Kantorovitch potentials ${\left(\psi_i\right)}_{i = 1}^N$. This description of our method is summarized in Algorithm~\ref{algorithm}.
\begin{algorithm}[h]
\SetAlgoLined
\SetKwInOut{Input}{Input}
\SetKwInOut{Output}{Output}
\Input{initial density \(\varrho_0\) on a mesh $\mathcal{T}$, initial points \(\mathbf{x}_0\), time step \(\tau\)}
\Output{sequence of approximations \((\varrho_{n}, \mathbf{x}_{n})\)}
\For{$n=0,1,2,\dots$}{
  \tcp{Compute OT (semi-discrete) with pysdot} 
    \vspace{0.1cm}
    \quad With \texttt{pysdot} to compute $\psi_n = {\left(\psi_{n,i}\right)}_{i = 1}^N$ and $\mathbf{b}_n = {\left(b_{n,i}\right)}_{i = 1}^N$ 
    \quad corresponding to $W_2^2\left(\varrho_n, \frac{1}{N}\sum_{i = 1}^N \delta_{x_{n,i}}\right)$ \;
    \vspace{0.1cm}
  \tcp{ Compute OT map via optimality conditions} 
    \[
        T_n(x)= x_{n,i^\star} \text{ where }
         i^\star = \argmin_{i = 1,\dots,N} \tfrac12\|x-x_{n,i}\|^2-\psi_{n,i} 
    \]
    \quad from optimality conditions for semi-discrete OT\;
    \vspace{0.1cm}
  \tcp{Point ODE update with explicit Euler} 
    \vspace{0.1cm}
    \quad Update atoms $\mathbf{x}_{n+1} = \mathbf{x}_{n} - \alpha (\mathbf{x}_{n} - \mathbf{b}_{n}) $ (e.g.\ \(\alpha=\tau N\))\;
    \vspace{0.1cm}
  \tcp{Fix advection over interval \([\tau n,\tau(n+1)]\)}
    \vspace{0.1cm}
    \quad Set advection $\nabla \Phi_n(x) = x - T_n(x)$\;
    \vspace{0.1cm}
   \tcp{Solve PDE with \texttt{fipy} over interval \([\tau n,\tau(n+1)]\)}
    \vspace{0.1cm}
    \quad Solve $\partial_t \varrho^n = 
        \Delta \varrho^n + 
        \divv\left(
            \varrho^n \nabla \Phi_n
        \right)$
    with $\varrho^n(0) = \varrho_n$ and set $\varrho_{n+1} = \varrho^n(\tau)$\;
}
\caption{Splitting scheme for~\eqref{eq.dynamic_quantization}}
\label{algorithm}
\end{algorithm}

On all experiments, the dynamics of $\mathbf{x}$ are slightly modified in order to observe the long-time behavior more quickly; more specifically we multiply their dynamics by a multiplicative factor 
\[
    \dot{\mathbf{x}}_t = \alpha\left(\mathbf{b}_t - \mathbf{x}_t\right).
\]
Since we know that the distance to the barycenters will go to zero from Theorem~\ref{thm.conv_bary}, it makes sense to add the multiplicative constant in the dynamics since for sufficiently large times $|x_i(t) - b_i(t)|$ is at most of the other of the $\diam \Omega_i(t)$ which, heuristically is of the order $N^{-1/d}$, the typical distance between minimizers of the uniform quantization problem. This speed parameter serves therefore to accelerate the convergence of the distance to the barycenters and observe the asymptotic behavior more quickly. 

Before discussing the experiments, we briefly emphasize that the numerical results below are exploratory and primarily intended to illustrate qualitative features of the long-time behavior of~\eqref{eq:longtime}.

\subsection*{A crystallization phenomenon}

On Figures~\ref{fig:crystallizationFK}--\ref{fig:crystallizationPME}, we display the approximate steady states obtained for increasing numbers of points $N = 50, 100, 200, 300, 400, 500$ and a velocity scaling $\alpha_N = \sqrt{N}$. In the case of linear diffusion, shown in Figure~\ref{fig:crystallizationFK}, the steady configurations of the atoms tend toward a uniform triangular lattice as $N$ increases, revealing a clear dynamic crystallization effect. The color map in this figure is not uniform across subplots, in order to highlight the tendency of the diffuse density $\varrho$ to concentrate according to a Gibbs-type weight with respect to the optimal Kantorovich potential inside each Laguerre cell. Formally, for large $N$ and $t$, the stationary configuration is well approximated by
\begin{equation}\label{eq.stationary}
    \varrho_{N}(t) \propto 
    \sum_{i = 1}^N e^{-\left(\frac{1}{2}|x - x_i(t)|^2 - \psi_i(t)\right)} \mathds{1}_{\Omega_i(t)}.
\end{equation}

As $N$ grows, the Laguerre cells $\Omega_i$ shrink and $\varrho_N$ becomes nearly constant within each cell, which makes~\eqref{eq.stationary} visually appear as a uniform density. This behavior is consistent with the crystallization phenomena established in~\cite{bourne2014optimality,bourne2021asymptotic} for models where the diffuse component is homogeneous. The trend is further illustrated in Figures~\ref{fig:exp1N=3} and~\ref{fig:exp3N=50}, which use a common color map across time to show that the densities flatten as equilibrium is approached.

\begin{figure}[ht]
    \centering
    \includegraphics[width=\textwidth]{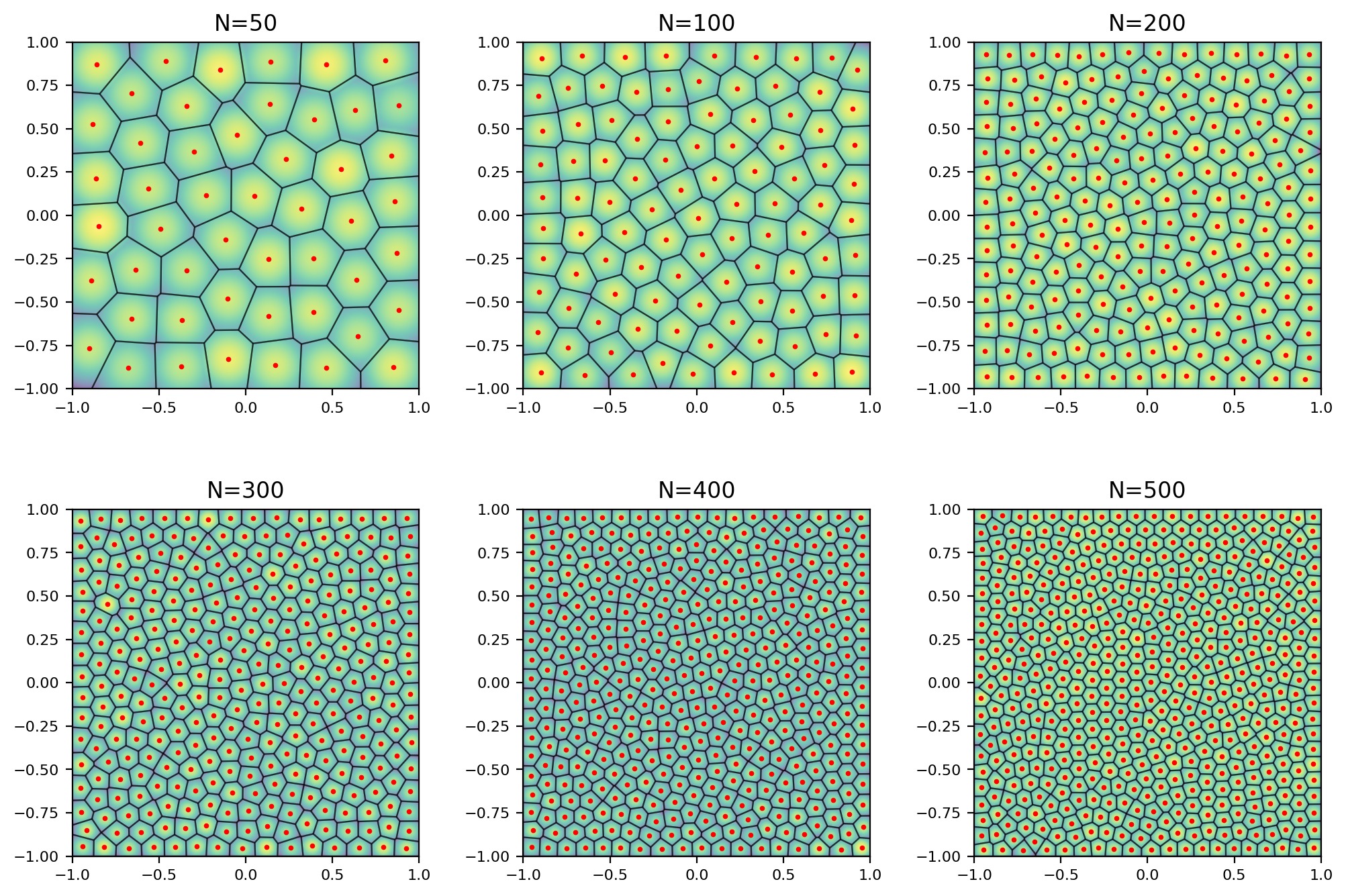}
    \caption{
        Crystallization phenomenon for the model with linear diffusion and speed $\alpha = \sqrt{N}$. 
        Long-time configurations for $N = 50,100,200,300,400,500$ after $200$ time steps with $\tau = 0.01$. 
        The density $\varrho$ stabilizes to an almost uniform profile, leading to a static triangular lattice of Laguerre cells.
    }
    \label{fig:crystallizationFK}
\end{figure}

A similar but less pronounced crystallization effect is observed for the evolution with a Porous Medium-type diffusion term, shown in Figure~\ref{fig:crystallizationPME}. 
Here we set $P(\varrho) = \varrho^m$ with $m = 10$, again with $\alpha = \sqrt{N}$. 
The nonlinear diffusion favors locally concentrated regions, producing a sharper crystalline pattern near zones of higher density.

\begin{figure}[ht]
    \centering
    \includegraphics[width=\textwidth]{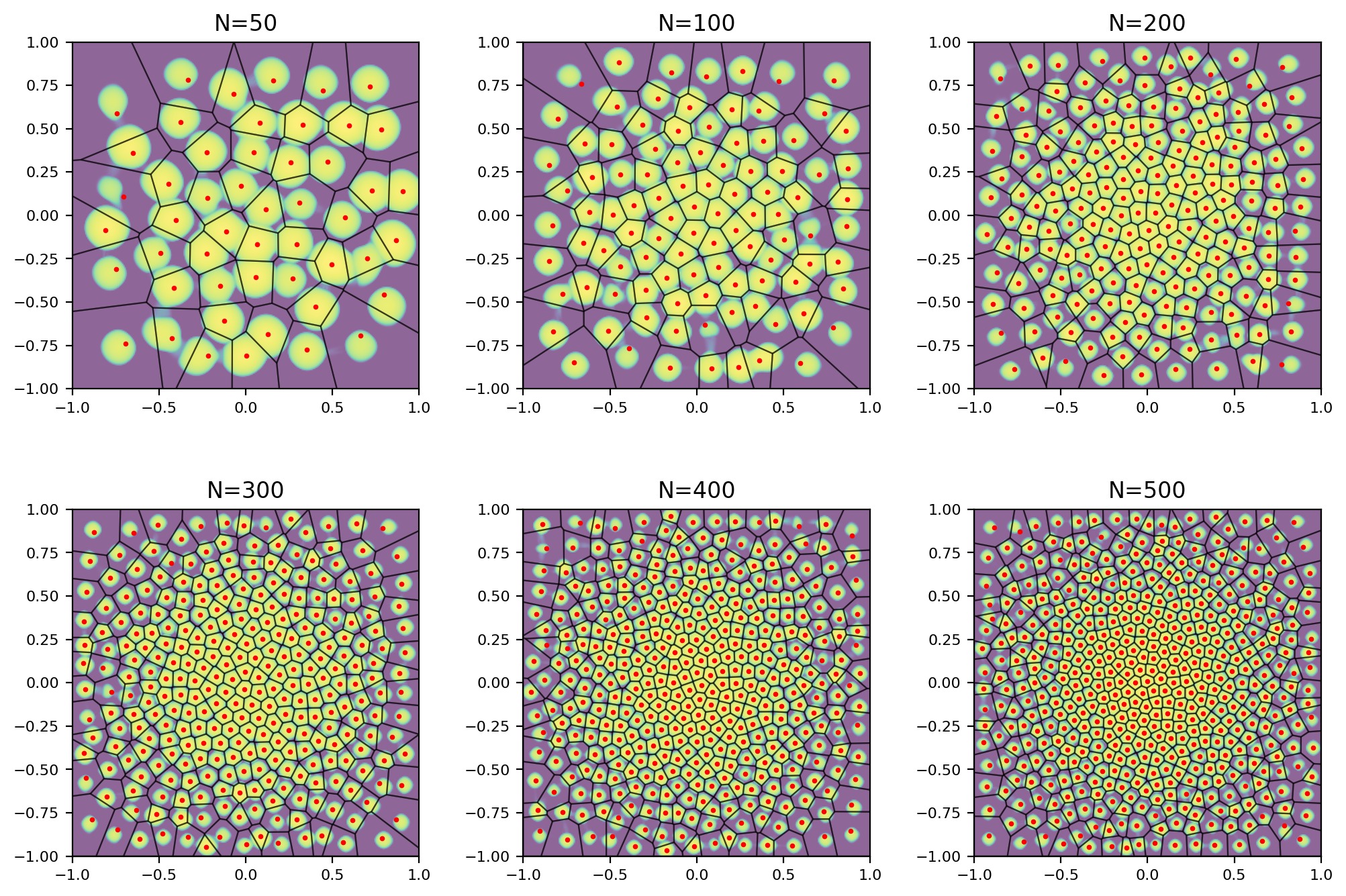}
    \caption{
        Crystallization phenomenon for the model with PME-type diffusion, i.e.\ $\Delta \varrho^m$ with $m = 10$ and speed $\alpha = \sqrt{N}$. 
        Shown are steady configurations for $N = 50,100,200,500,800,1000$ after $200$ time steps with $\tau = 0.01$. 
        The nonlinear diffusion enhances localization and produces sharper crystalline regions in high-density zones, though small fluctuations remain visible even at long times.
    }
    \label{fig:crystallizationPME}
\end{figure}

\subsection*{Discussion on the long-time behavior of the densities}

We now turn to a closer inspection of the long-time behavior of the diffuse component $\varrho_t$. 
As suggested by the previous section, it is natural to conjecture that stationary configurations of the coupled system should approach the Gibbs-type profile~\eqref{eq.stationary}. 
However, this correspondence does not always manifest clearly for small numbers of atoms.

Figure~\ref{fig:exp1N=3} shows the evolution of~\eqref{eq:longtime} with only three atoms under linear diffusion. 
While the atoms converge to the barycenters of their respective Laguerre cells, the resulting tessellation does not align perfectly with the visible structure of the density. 
This suggests that for small $N$, the coupling between the discrete and continuous components is too coarse for the equilibrium state to fully reflect the formal stationary form~\eqref{eq.stationary}.
\begin{figure}[ht]
    \centering
    \includegraphics[width=\textwidth]{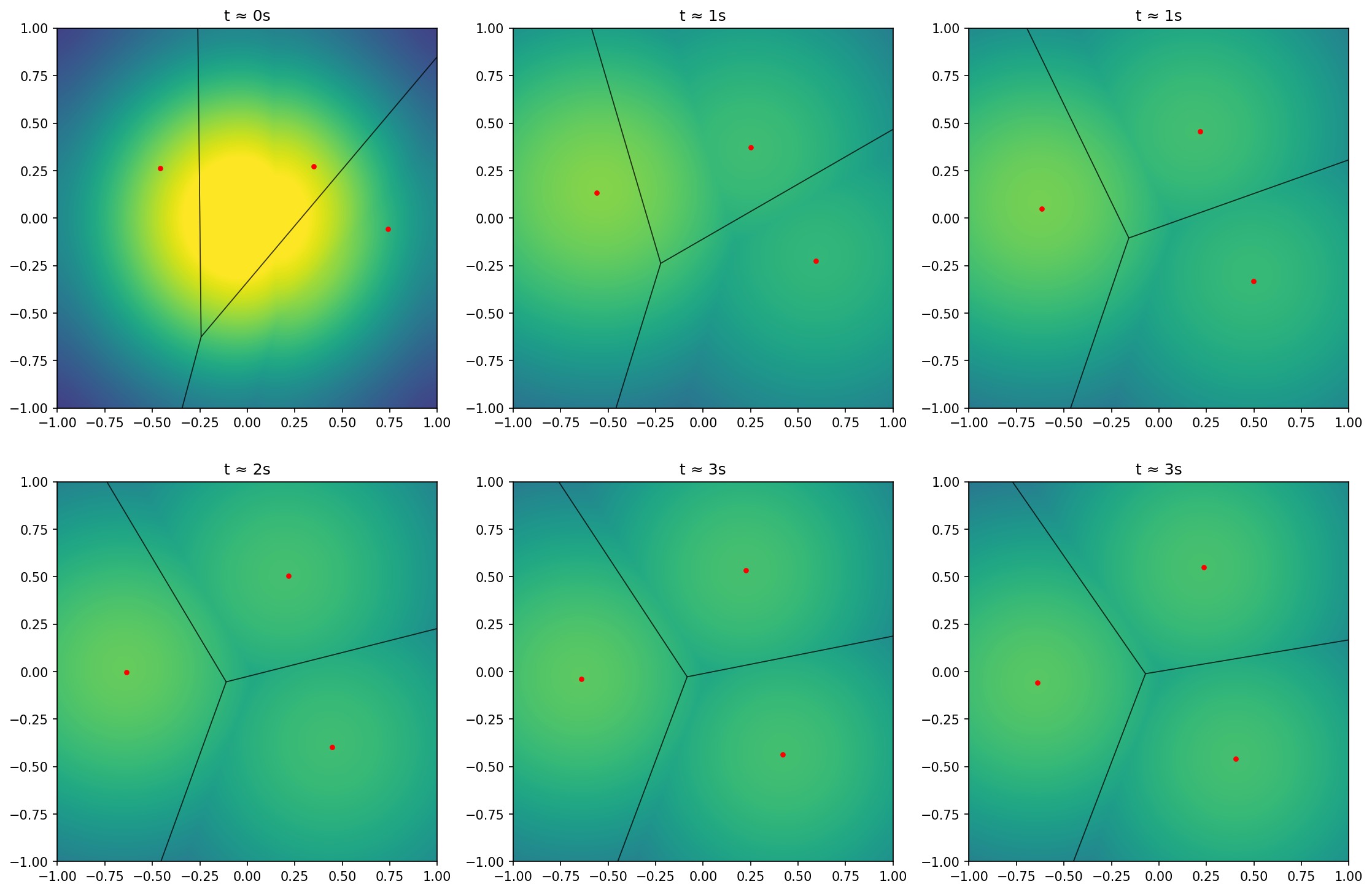}
    \caption{
        Long-time behavior for the model with linear diffusion and $N=3$ atoms. 
        The atoms converge to their barycenters, but the diffuse density $\varrho_t$ does not coincide with the Gibbs-type stationary profile associated with $\Phi_t[\mathbf{x},\psi]$.
    }
    \label{fig:exp1N=3}
\end{figure}

The situation changes markedly for larger $N$. 
In Figure~\ref{fig:exp3N=50}, we repeat the same experiment with $N=50$ atoms. 
Although the color map is kept fixed across all times (which visually exaggerates the flattening), the final configuration shows an almost constant density, consistent with the stationary structure~\eqref{eq.stationary} and with the crystalline organization observed in Figure~\ref{fig:crystallizationFK}. 
This supports the idea that as $N \to \infty$, the discrete measure of atoms becomes dense enough to recover the expected macroscopic equilibrium.

\begin{figure}[ht]
    \centering
    \includegraphics[width=\textwidth]{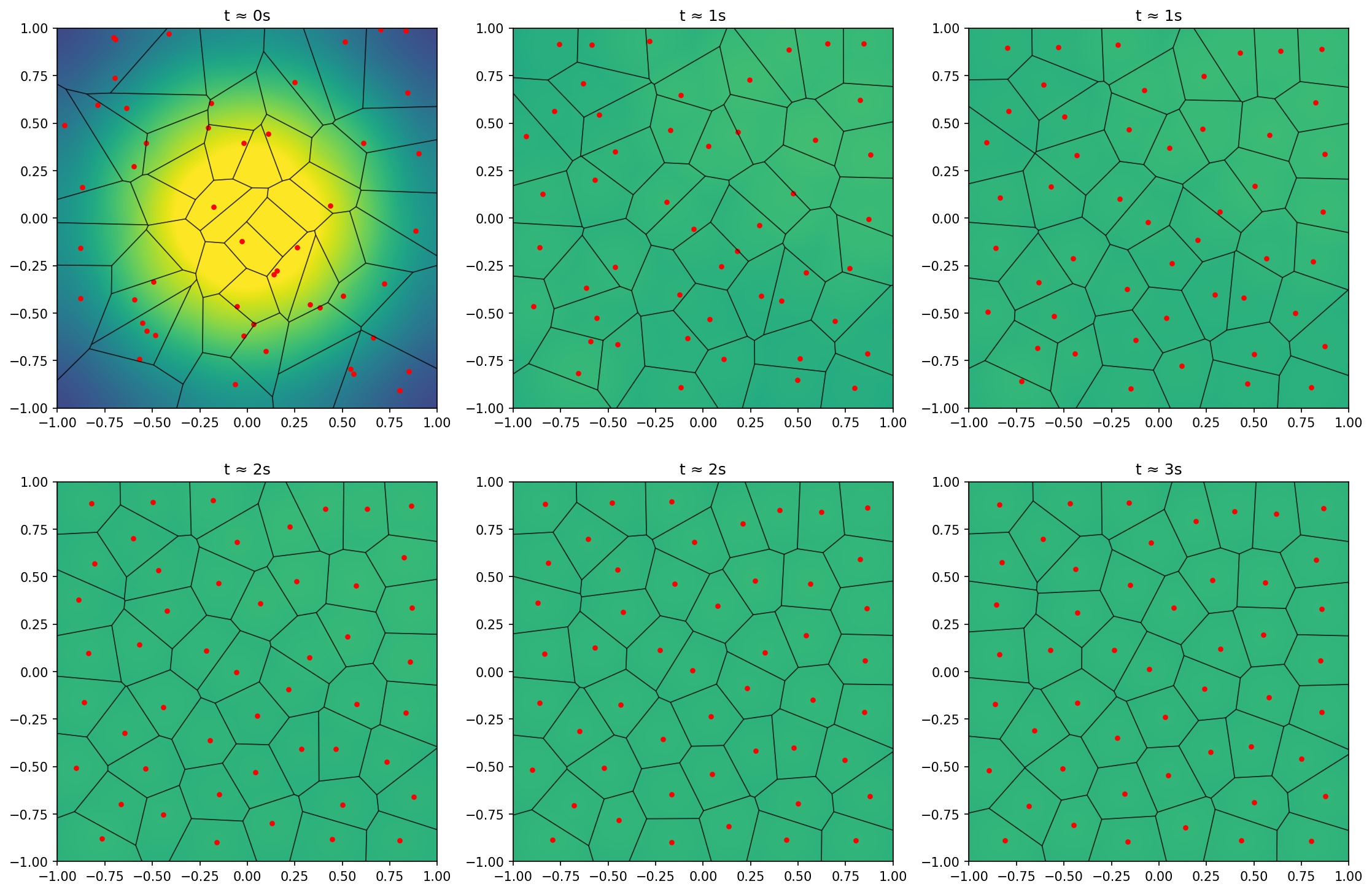}
    \caption{
        Long-time behavior for the model with linear diffusion and $N=50$ atoms. 
        The diffuse component $\varrho_t$ becomes nearly uniform, consistent with the asymptotic configuration described in~\eqref{eq.stationary}. 
        The apparent constancy of the density is accentuated by using the same color scale for all times.
    }
    \label{fig:exp3N=50}
\end{figure}

Figures~\ref{fig:evolPME_N=5} and~\ref{fig:evolPME_N=500} illustrate the corresponding dynamics for the porous-medium variant of the model, in which the diffusion term is replaced by $\Delta P(\varrho)$ with $P(\varrho) = \varrho^m$ and $m = 10$. 
The first feature we observe in this evolution is that, even for a well-spread initial condition (here a truncated Gaussian), the early dynamics are dominated by the attraction toward the atomic measure. 
This leads to a rapid concentration of mass around the points $\mathbf{x}_t$, followed by a slower diffusion-driven relaxation toward profiles typically favored by the porous-medium equation. 
This two-stage behavior highlights the subtle balance between aggregation and diffusion inherent to the coupled system.

Although the nonlinear diffusion case is not directly covered by the analysis of Theorem~\ref{thm.conv_bary}, the numerical results consistently show convergence of each atom $x_i$ toward the barycenter of its Laguerre cell. 
This behavior is in line with the known regularizing effects of porous-medium equations, which yield H\"older-continuous densities~\cite{caffarelli1979continuity,caffarelli1980regularity,vazquez2007porous}. 
Even though rigorous results for the coupled setting are lacking, the persistence of convergence strongly suggests that this regularity continues to stabilize the underlying ODE dynamics.

A different picture emerges when the velocity scaling is reduced to $\alpha = 1$, as opposed to the accelerated $\alpha_N = \sqrt{N}$ used in the crystallization experiments. 
In this slower regime, especially for large $N$ (e.g.\ $N=500$), the density no longer spreads uniformly but instead tends to form a single large connected component of higher concentration, as shown in Figure~\ref{fig:evolPME_N=500}. 
This indicates a competing effect between nonlinear diffusion, which promotes compactly supported profiles, and the attraction to barycenters, which drives spatial homogenization.

\begin{figure}[ht]
    \centering
    \includegraphics[width=\textwidth]{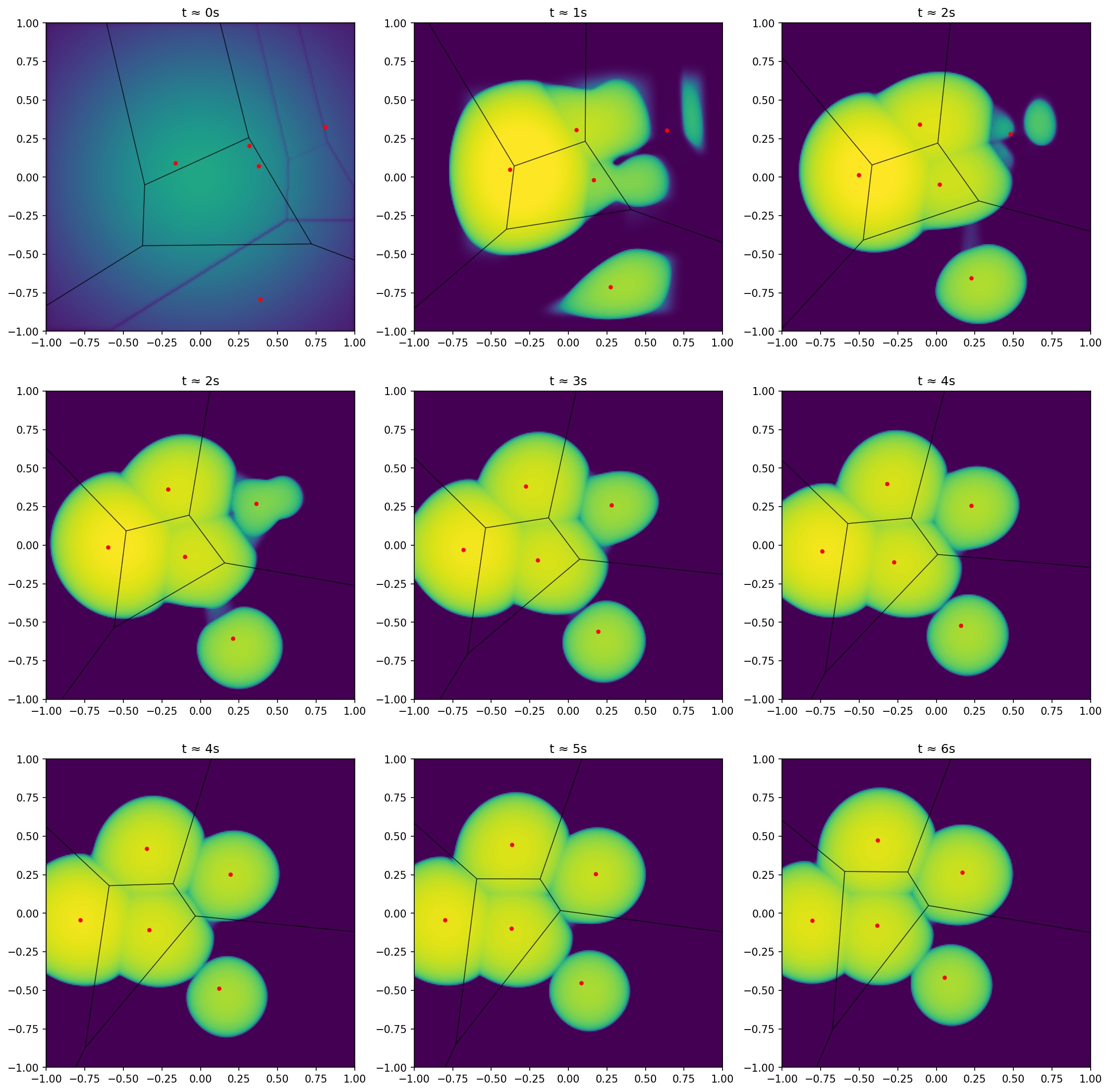}
    \caption{
        Long-time behavior for the nonlinear diffusion model with $P(\varrho)=\varrho^{10}$, $N=5$ atoms, and $\alpha=1$. 
        The convergence of atoms to barycenters persists, although the equilibrium density remains spatially nonuniform.
    }
    \label{fig:evolPME_N=5}
\end{figure}
\begin{figure}[b]
    \centering
    \includegraphics[width=\textwidth]{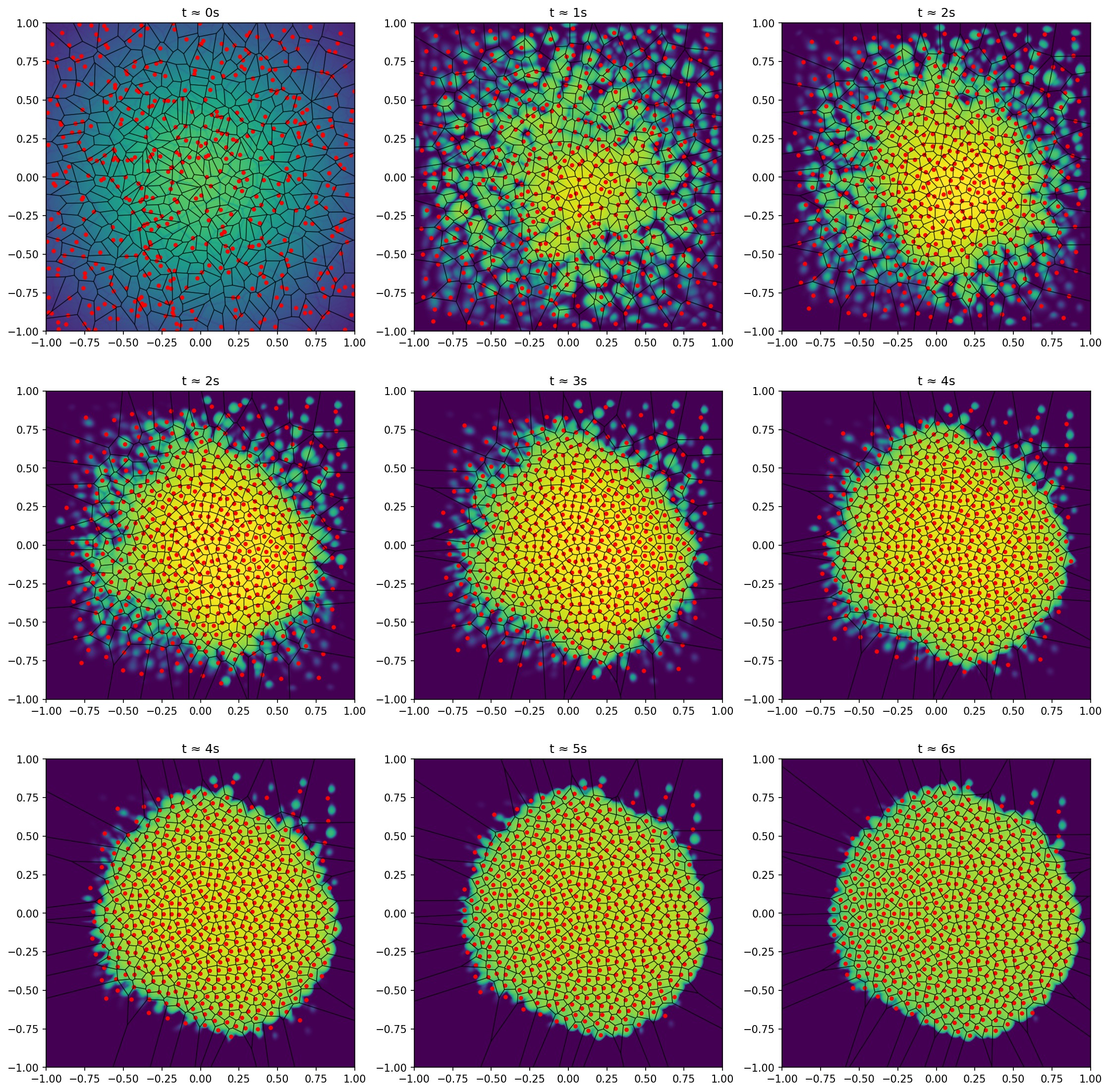}
    \caption{
        Long-time behavior for the nonlinear diffusion model with $P(\varrho)=\varrho^{10}$, $N=500$ atoms, and $\alpha=1$. 
        The slower dynamics lead to the formation of a large, connected concentration region, contrasting with the uniform crystallization pattern observed for faster scaling.
    }
    \label{fig:evolPME_N=500}
\end{figure}


\bibliographystyle{abbrv}
\bibliography{main.bib}

\end{document}